%
%
%
%
\documentclass[11pt,a4paper,Color]{amsart}
\usepackage{bbm}
\usepackage{stmaryrd}
\usepackage{cases}
\usepackage{mathrsfs}
\usepackage{amsfonts}
\usepackage{amsmath}
\usepackage{empheq}
\usepackage{color}
\usepackage{amssymb}
\parskip=5pt
\textwidth 15.4cm
\textheight 21.5cm 
\headsep 0.8cm     
\oddsidemargin 0.25 cm
\evensidemargin 0.0 cm
\oddsidemargin 0.25 cm \evensidemargin 0.0 cm
\newtheorem{theorem}{Theorem}[section]
\newtheorem{lemma}[theorem]{Lemma}

\newtheorem{corollary}[theorem]{Corollary}
\newtheorem{proposition}[theorem]{Proposition}
\theoremstyle{definition}
\newtheorem{definition}[theorem]{Definition}
\theoremstyle{remark}
\newtheorem{remark}[theorem]{Remark}
\newcommand{\dif}{\mathrm{d}}
\newcommand{\Hess}{\mathrm{Hess}}
\newcommand{\Id}{\mathrm{Id}}
\newcommand{\R}{\mathrm{R}}
\newcommand{\maxd}{\mathrm{maxd}}
\newcommand{\ta}{\mathrm{ta}}
\newcommand{\dist}{\mathrm{dist}}

\newcommand{\s}{\mathrm{s}}
\newcommand{\cc}{\mathrm{c}}
\newcommand{\co}{\mathrm{co}}

\newcommand{\e}{\mathrm{e}}

\newcommand{\graph}{\mathrm{graph}}
\newcommand{\tr}{\mathrm{tr}}

\def\ds{\displaystyle}
\def\vle{{\rm vol}}

 \numberwithin{equation}{section}



\begin{document}

\title[V.P.$m$th.P.M.C.F in hyperbolic spaces]
{Volume-Preserving flow by powers of the $m$th mean curvature in the
hyperbolic space }

\author{Shunzi Guo}
\address{School of Mathematics and Statistics, Minnan Normal University, Zhangzhou, 363000, People's Republic of China;
School of Mathematics and Computer Science, Hubei University, Wuhan,
430062, People's Republic of China} \email{guoshunzi@yeah.net}
\thanks{This work was partially supported by National Natural Science Foundation of
China (NSFC) (Grants 11171096 and 10917055)}

\author{Guanghan Li}
\address{School of Mathematics and Computer Science, Hubei University, Wuhan 430062, People's Republic of China}
\email{liguanghan@163.com}

\author{Chuanxi Wu}
\address{School of Mathematics and Computer Science, Key Laboratory of Applied Mathematics of
Hubei Province, Hubei University, Wuhan 430062, People's Republic of
China} \email{cxwu@hubu.edu.cn}

\renewcommand{\subjclassname}{%
  \textup{2010} Mathematics Subject Classification}
\subjclass[2010]{Primary 53C44, 35K55; Secondary, 58J35, 35B40}


\keywords{Powers of the $m$th mean curvature, Horosphere, Convex
hypersurface, Hyperbolic space}

\begin{abstract}
This paper concerns closed hypersurfaces of dimension $n(\geq 2)$ in
the hyperbolic space ${\mathbb{H}}_{\kappa}^{n+1}$ of constant
sectional curvature $\kappa$ evolving in direction of its normal
vector, where the speed is given by a power $\beta (\geq 1/m)$ of
the $m$th mean curvature plus a volume preserving term, including
the case of powers of the mean curvature and of the $\mbox{Gau\ss}$
curvature. The main result is that if the initial hypersurface
satisfies that the ratio of the biggest and smallest principal
curvature is close enough to $1$ everywhere, depending only on $n$,
$m$, $\beta$ and $\kappa$, then under the flow this is maintained,
there exists a unique, smooth solution of the flow for all times,
and the evolving hypersurfaces exponentially converge to a geodesic
sphere of ${\mathbb{H}}_{\kappa}^{n+1}$, enclosing the same volume
as the initial hypersurface.
\end{abstract}
\maketitle
\tableofcontents

\section{Introduction}
Let $M^n$ be a smooth, compact oriented manifold of dimension $n
(\geq 2)$ without boundary, $(N^{n+1},\bar{g})$ be an
$(n+1)$-dimensional completed Riemannian manifold, and
$X_{0}:M^{n}\rightarrow {N}^{n+1}$ a smooth immersion. Consider a
one-parameter family of smooth immersions: $X_{t}:M^{n}\rightarrow
{N}^{n+1}$ evolving according to
\begin{equation}\label{vpmthpcf}
\left\{
\begin{array}{ll}
\frac{\partial}{\partial t}\mathrm{X}\left(p,t\right)
    = \{\bar{F}(t)-F(\lambda(\mathscr{W}\left(p,t\right)))\} \nu\left(p,t\right),& p\in M^{n},\\[2ex]
X(\cdot,0)=X_{0}(\cdot),
\end{array}\right.
\end{equation}
where $\nu\left(p,t\right)$ is the outer unit normal to
$M_{t}=X_{t}(M^{n})$ at the point $\mathrm{X}\left(p,t\right)$ in
the tangent space $TN^{n+1}$,
$\mathscr{W}_{-\nu}\left(p,t\right)=-\mathscr{W}_{\nu}\left(p,t\right)$
is the matrix of Weingarten map on the tangent space $TM^{n}$
induced by $X_{t}$, $\lambda$ is the map from $T^{*}M^{n}\otimes
TM^{n}$ to $\mathbb{R}^{n}$ which gives the eigenvalues of the map
$\mathscr{W}$, $F$ is a smooth symmetric function, and $\bar{F}(t)$
is the average of $F$ on $M_{t}$:
\begin{equation}\label{ave barF}
\bar{F}(t)=\frac{\int_{M_{t}}F(\lambda(\mathscr{W}))\dif\mu_{t}}{\int_{M_{t}}\dif\mu_{t}},
\end{equation}
where $\dif\mu_{t}$ denotes the surface area element of $M_{t}$. As
is clear for the presence of the global term $\bar{F}(t)$ in
equation \eqref{vpmthpcf}, the flow keeps the volume of the domain
$\Omega_{t}$ enclosed by $M_{t}$ constant.

This paper considers the flow \eqref{vpmthpcf} with the speed
$F(\lambda)$ given by a power of an $m$th mean curvature, namely
\begin{equation}\label{def barF}
F(\lambda_{1},\dots,\lambda_{n})=H_{m}^{\beta},
\end{equation}
where $(\lambda_{1},\dots,\lambda_{n})$ are the principal curvatures
of the evolving hypersurfaces $M_{t}$, and for any $m=1,\dots, n$,
the $m$th mean curvature $H_{m}$ is the average of the $m$th
elementary symmetric functions $E_{m}$, namely
\begin{equation}\label{def Hm}
H_{m}={ n\choose m }^{-1}E_{m}=\frac{m!(n-m)!}{n!}\sum\limits_{1
\leq i_{1}< \cdots < i_{m} \leq n}\lambda_{i_{1}}\cdots
\lambda_{i_{m}}.
\end{equation}
Obviously $H_{1}=H/n$ and $H_{n}=K$, where $H$ and $K$ denote the
mean curvature and the Gau\ss-Kronecker curvature respectively.

For the flow \eqref{vpmthpcf} without the volume constraint term
$\bar{F}(t)$, in the case when $N^{n+1}$ is the Euclidean space
${\mathbb{R}}^{n+1}$, there are many papers which consider the
evolution of convex hypersurfaces, of particular interest here is in
the analysis of the flow \eqref{vpmthpcf} where the speed
$F(\lambda)$ is homogeneous of degree one in the principal
curvatures, beginning with a classical result of Huisken
\cite{Hui84} who proved that any closed convex hypersurface under
mean curvature flow shrinks to a round point in finite time
(Huisken's theorem may be considered as an extension of the theorem
of Gage and Hamilton \cite{Ga-Ha} to dimensions bigger than one),
and including the similar results on the $n$th root of
Gau\ss-Kronecker curvature \cite{Cho85}, the square root of scalar
curvature \cite{Cho87}, and a large family of other speeds
\cite{And94,And07}. The first such result with degree of homogeneity
greater than one was due to Chow \cite{Cho85} who considered flow by
powers of the Gau\ss-Kronecker curvature. He proved that the
evolving hypersurfaces by $K^{\beta}$ with $\beta\geq 1/n$ become
spherical as they shrink to a point provided the initial
hypersurface $M_{0}$ is sufficiently pinched, we also mention that
Tso \cite{Tso} showed the same result for the Gau\ss-Kronecker
curvature flow and Andrews \cite{And00} proved that the limit of the
solutions under $K^{\beta}$-flow with $\beta \in (\frac{1}{n+2},
\frac{1}{n}]$ evolve purely by homothetic contraction to a point in
finite time. Later such results were proved by Schulze \cite{Sch06}
for the flow by powers of the mean curvature, by Alessandroni and
Sinestrari \cite{AC10} for the flow by powers of the scalar
curvature, by Andrews \cite{And12} for the flow of convex
hypersurfaces with pinched principal curvatures by high powers of
curvature, and for such flows in the special case of surfaces in
three-dimensional spaces \cite{And99, And10, Schn05, Sch06}, where
the lower dimension allows a more complete understanding of the
equation for the evolution of the second fundamental form. However,
when $N^{n+1}$ is a more general Riemannian manifold, there are few
results on the behavior of these flows: Huisken \cite{Hui86}
extended the result of \cite{Hui84} in ${\mathbb{R}}^{n+1}$ to
compact hypersurfaces in general Riemannian manifolds with suitable
bounds on curvature. Andrews \cite{Andr94} has considered a flow
which takes any compact hypersurface with principal curvatures
greater than $\sqrt{c}$ with $c>0$ in a Riemannian background space
with sectional curvatures at least $-c$, and converges to a round
point in finite time.

The volume-preserving versions of these flows are the flows
\eqref{vpmthpcf}-\eqref{ave barF} with an extra term $\bar{F}(t)$
which balances the contraction. In the case of volume-preserving
mean curvature flow, Huisken \cite{Hui87} showed that convex
hypersurfaces remain convex for all time and converge exponentially
fast to round spheres (the corresponding result for curves in the
plane is due to Gage \cite{Ga86}), while Andrews \cite{And01}
extended this result to the smooth anisotropic mean curvature flow,
and McCoy showed similar results for the surface area preserving
mean curvature flow \cite{McC03} and the mixed volume preserving
mean curvature flows \cite{McC04}. The volume-preserving flow has
been used to study constant mean curvature surfaces between parallel
planes \cite{At97,At03} and to find canonical foliations near
infinity in asymptotically flat spaces arising in general relativity
\cite{HY96} (Rigger \cite{Rig04} showed analogous results in the
asymptotically hyperbolic setting). If the initial hypersurface is
sufficiently close to a fixed Euclidean sphere (possibly
non-convex), Escher and Simonett \cite{ES98} proved that the flow
converges exponentially fast to a round sphere, a similar result for
average mean convex hypersurfaces with initially small traceless
second fundamental form is due to Li \cite{Li09}. For a general
ambient manifold, Alikakos and Freire \cite{AF03} proved long time
existence and convergence to a constant mean curvature surface under
the hypotheses that the initial hypersurface is close to a small
geodesic sphere and that it satisfies some non-degenerate
conditions. While, Cabezas-Rivas and Miquel exported the Euclidean
results of \cite{At97,At03} to revolution hypersurfaces in a
rotationally symmetric space \cite{CR-M09}, and showed the same
results as Huisken \cite{Hui87} for a hyperbolic background space
\cite{CR-M07} by assuming the initial hypersurface is
horospherically convex (the definition will be given later).

On the other hand, there are few results on speeds different from
the mean curvature: McCoy \cite{McC05} proved the convergence to a
sphere for a large class of function $F$ homogeneous of degree one
(including the case $F=H_{m}^{\beta}$ with $m\beta=1$), Makowski
showed that the mixed volume preserving curvature flow for a
function $F$ homogeneous of degree one, starting with a compact and
strictly horospheres-convex hypersurface in the hyperbolic space
exponentially converges to a geodesic sphere \cite{Mak12}, and the
volume preserving curvature flow in Lorentzian manifolds for $F$ as
a function with homogeneous of degree one exponential converges to a
hypersurface of constant $F$-curvature \cite{Mak13} (moreover,
stability properties and foliations of such a hypersurface  was also
examined). In 2010 Cabezas-Rivas and Sinestrari \cite{CS10} studied
the deformation of convex hypersurfaces in ${\mathbb{R}}^{n+1}$ by a
speed of the form \eqref{def Hm} for some power $\beta \geq 1/m$. In
this way $F$ is a homogeneous function of the curvatures with a
degree $m\beta \geq 1$. In particular, they proved the following
\begin{theorem}\label{theo1.1}
For $m\in \{1,\dots,n\}$, $m\beta \geq 1$ there exists a positive
constant $\mathcal{C}=\mathcal{C}(n, m,\beta ) < 1/n^n $ such that
the following holds: If the initial hypersurface of
${\mathbb{R}}^{n+1}$ is pinched in the sense that
\begin{equation}\label{initial pinched}
K(p)> \mathcal{C}H^n(p)>0\quad \text{for all}\quad p \in M^n,
\end{equation}
then the flow \eqref{vpmthpcf}-\eqref{def barF} with $F$ given by
\eqref{def Hm}, has a unique and smooth solution for all times,
inequality \eqref{initial pinched} remains everywhere on the
evolving hypersurfaces $M_{t}$ for all $t>0$ and the $M_{t}$'s
converge, exponentially in the $C^{\infty}$-topology, to a round
sphere of the same volume as $M_{0}$.
\end{theorem}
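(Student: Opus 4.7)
The plan is to treat \eqref{vpmthpcf}-\eqref{def barF} as a quasilinear parabolic system on the convex cone $\{\lambda_i>0\}$; since the pinching \eqref{initial pinched} forces $M_0$ to be strictly convex, $H_m^{\beta}$ is smooth and strictly monotone there, and short-time existence on a maximal interval $[0,T^*)$ follows from standard parabolic PDE theory. The core of the proof is to establish a priori estimates on $[0,T^*)$ that (i) preserve the pinching \eqref{initial pinched}, (ii) give two-sided bounds on the principal curvatures, and (iii) produce higher-order regularity, so that $T^*=\infty$ and the $M_t$'s converge exponentially to a round sphere enclosing the same volume as $M_0$.

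\textbf{Pinching preservation.} I would derive the evolution equation of the scale-invariant ratio $\Phi=K/H^n$ (or an equivalent quotient comparing the smallest and largest principal curvatures). Because the global term $\bar F(t)\nu$ only adds a multiple of the identity to the Weingarten operator, it drops out of any scale-invariant, curvature-homogeneous quotient, and the pinching question reduces to that of the local contracting flow. The key algebraic fact, which is the engine of the Cabezas-Rivas--Sinestrari argument, is that whenever $\mathcal{C}$ is chosen sufficiently small depending on $n,m,\beta$, the reaction term in $\partial_t\Phi=\mathcal{L}\Phi+\mathcal{R}(\lambda)$ is non-positive on $\{\Phi\geq \mathcal{C}\}$; this rests on a combinatorial identity for the mixed partial derivatives of $E_m^{\beta}$ together with Newton--MacLaurin type inequalities. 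A maximum principle then propagates $\Phi\geq \mathcal{C}$ in time, giving uniform two-sided control on $\lambda_{\max}/\lambda_{\min}$.

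\textbf{Curvature bounds and long-time existence.} With the pinching in hand, all curvature scales are equivalent, so the remaining issue is a two-sided bound on $H$. For the upper bound, I would adapt the Tso auxiliary quantity $H_m^{\beta}/(u-\delta)$, where $u$ is the support function from an interior origin; the contribution of $\bar F(t)$ is absorbed into a lower-order term of controlled sign by means of the pinching together with an isoperimetric estimate on $\bar F$ in terms of the preserved volume. The lower bound on $H$ follows from volume preservation: a pinched convex hypersurface with vanishing curvature would flatten to zero enclosed volume, so the fixed volume bounds both the inradius and the circumradius above and below, hence $H$ is bounded away from zero. Once $H$ and $\Phi$ are pinched, the flow operator is uniformly parabolic; Krylov--Safonov and Evans--Krylov then deliver $C^{2,\alpha}$ estimates, Schauder bootstrapping yields bounds in every $C^{k}$, and a standard continuation argument gives $T^*=\infty$.

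\textbf{Convergence to a sphere.} Finally I would examine the evolution of the oscillation $F_{\max}(t)-F_{\min}(t)$, or equivalently the normalized traceless second fundamental form, and show that once the pinching is strict it satisfies a differential inequality of the form $\partial_t \omega\leq -c\,\omega$ modulo terms already controlled by the pinching. A Gronwall argument then forces exponential decay of $\omega$, so $M_t$ approaches in $C^0$ a hypersurface of constant $F$-curvature, which by Alexandrov's theorem for $H_m^{\beta}$ is a round sphere, necessarily of the prescribed volume by the constraint. Interpolation with the $C^k$ bounds of Step 3 upgrades this to exponential $C^{\infty}$ convergence. The main obstacle will be the algebraic verification in Step 2: the sign of $\mathcal{R}$ is controlled only when the pinching constant is tuned precisely to $n,m,\beta$, and this delicate calibration is exactly what fixes $\mathcal{C}=\mathcal{C}(n,m,\beta)<1/n^n$ and makes everything downstream work.
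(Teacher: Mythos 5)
The statement you are proving is Theorem~1.1 of Cabezas-Rivas and Sinestrari, which the paper quotes without proof; the paper's own contribution is the hyperbolic analogue, proved by exactly the architecture you outline (pinching via $K/H^{n}$, Tso's quantity, Krylov--Safonov, exponential decay of $1/n^{n}-K/H^{n}$). Your outline is structurally right, but it has genuine gaps at the two places where the actual work is done. First, in the pinching step you claim the global term $\bar F(t)$ ``drops out'' of the evolution of $K/H^{n}$ and that the obstruction is the sign of a reaction term $\mathcal{R}(\lambda)$. Neither is accurate: $\bar F$ survives in the zeroth-order term $\bigl[(m\beta-1)F+\bar F\bigr]\frac{Q}{H}\bigl(n|A|^{2}-H^{2}\bigr)$, which fortunately is automatically nonnegative, so it is harmless; the real difficulty is the pair of terms quadratic in $\nabla\mathscr{W}$, namely $\frac{Q}{H^{2}}\bigl|H\nabla\mathscr{W}-\mathscr{W}\nabla H\bigr|^{2}_{\dot F,b}$ versus $Q\,\tr_{b-\frac{n}{H}\Id}\bigl(\ddot F(\nabla\mathscr{W},\nabla\mathscr{W})\bigr)$, which do \emph{not} vanish at a spatial minimum of $Q$ and must be balanced against each other. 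The constant $\mathcal{C}$ is calibrated precisely so that $\frac{n-1}{2\sqrt n}W_{1}(\varepsilon)\varepsilon^{2}\geq W_{2}(\varepsilon)\mathscr{N}(\varepsilon)$, not so that a function of $\lambda$ alone changes sign; a proof that only inspects $\mathcal{R}(\lambda)$ would fail.

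Second, your lower bound on $H$ and your regularity step both have holes. Volume preservation bounds the inradius and circumradius, but that only controls the curvature at the touching points of inscribed and circumscribed spheres, i.e.\ it gives $\sup_{M_t}F\geq c_{0}>0$, not $\inf_{M_t}F\geq c_{0}$; passing from the sup to the inf requires a parabolic Harnack inequality applied to the evolution equation of $F$ (this is exactly how both [CS10] and the present paper proceed), and without it the convergence argument has no uniform lower curvature bound to feed into the Gronwall step. Finally, Evans--Krylov is not available here: for $m\beta>1$ the speed $H_{m}^{\beta}$ is neither concave nor convex in $D^{2}r$, which the paper points out explicitly. The correct route is to first get a $C^{\alpha}$ bound on $F$ itself from Krylov--Safonov, then apply Caffarelli's elliptic $C^{2,\alpha}$ estimate to the \emph{concave} operator $H_{m}^{1/m}$ at each fixed time, reading $H_{m}^{\beta-1/m}$ as a H\"older-continuous coefficient, and then handle time regularity separately. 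Your convergence step via $F_{\max}-F_{\min}$ and an Alexandrov theorem is a plausible alternative in spirit, but the asserted differential inequality $\partial_{t}\omega\leq-c\,\omega$ is unsubstantiated, whereas the quantity $f=1/n^{n}-K/H^{n}$ comes with a ready-made decay inequality from the already-computed reaction term together with Schulze's inequality $n|A|^{2}-H^{2}\geq\delta H^{2}f$.
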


However, the results of \cite{CS10} do not close relate to the
ambient space, we face the challenges of extending the above results
to hypersurface to more general ambient spaces. But not every
Riemnnian manifold is well suited to deal with the situation
analogous to the setting in Euclidean spaces. We want to consider
the case that the ambient space is a simply connected Riemannian
manifold of constant sectional curvature $\kappa (<0)$ whose flow
behaves quite differently compared to the Euclidean space to a
certain extent.

Set $a=\sqrt{|\kappa|}$. $N_{\kappa}^{n+1}$ is isometric to the
hyperbolic space ${\mathbb{H}}_{\kappa}^{n+1}$ of radius $1/a$:
$${\mathbb{H}}_{\kappa}^{n+1}:= \{p \in L^{n+2}: \left\langle p, p\right\rangle = -\frac{1}{a^2}\}.$$
Here $ (L^{n+2},\langle \cdot, \cdot\rangle)$ denotes the
$(n+2)$-dimensional Lorentz-Minkowski space. To consider the flow
\eqref{vpmthpcf}-\eqref{def barF} in $N_{\kappa}^{n+1}$ is then
equivalent to consider the flow \eqref{vpmthpcf}-\eqref{def barF} in
${\mathbb{H}}_{\kappa}^{n+1}$. Now, it is necessary to provided some
definitions as in \cite{Bor,CR-M07} as following.
\begin{definition}
A horosphere $\mathcal {H}$ of ${\mathbb{H}}_{\kappa}^{n+1}$ is the
limit of a geodesic sphere of ${\mathbb{H}}_{\kappa}^{n+1}$ as its
center goes to the infinity along a fixed geodesic ray.
\end{definition}
\begin{definition}
An horoball $\mathscr{H}$ is the convex domain whose boundary is a
horosphere.
\end{definition}

\begin{definition}
A hypersurface $M$ of ${\mathbb{H}}_{\kappa}^{n+1}$ is said to be
convex by horospheres ($h$-convex for short) if it bounds a domain
$\Omega$ satisfying that for every $p\in M = \partial \Omega$, there
is a horosphere $\mathcal {H}$ of ${\mathbb{H}}_{\kappa}^{n+1}$
through $p$ such that $\Omega$ is contained in $\mathscr{H}$ of
${\mathbb{H}}_{\kappa}^{n+1}$ bounded by $\mathcal {H}$.
\end{definition}
\begin{remark}
In fact, Currier in \cite{Cu} showed that $h$-convex immersions of
smooth compact hypersurfaces are embedded spheres, and Borisenko and
Miquel in \cite{Bor} showed that horosphere $\mathcal {H}$ of
${\mathbb{H}}_{\kappa}^{n+1}$ is weakly (strictly) $h$-convex if and
only if all its principal curvatures are (strictly) bounded from
below by $a$ at each point.
\end{remark}
Most of the literature mentioned above requires a pinching condition
on the initial hypersurface, so that parabolical maximum principles,
an important tool in the investigation of evolution equations, can
be used to deduce that they can converge and become spherical in
shape as the final time is approached under these flows. It is
well-known that in hyperbolic spaces the negative curvature of the
background space produces terms which introduces the maximum
principles either fail or become more subtle for our flow
\eqref{vpmthpcf}-\eqref{def barF}. So for our purposes a challenge
in the hyperbolic ambient setting is how to find a suitable pinching
condition on the initial hypersurfaces. However in the hyperbolic
space there is an intuitive example, as pointed out by Cabezas-Rivas
and Miquel in \cite{CR-M07}: a geodesic sphere, moving outward in
the radial direction with the speed $H^{\beta}_{m}$, its normal
curvature decreases, and it becomes nearer and nearer to that of
horosphere, but it never gets $h$-convex. This fact leads us to hope
for the result by choosing a suitable convex hypersurface which is
sufficiently positively curved to overcome the obstructions from the
negative curvature imposed by the ambient spaces like that of space
of Cabezas-Rivas and Sinestrari \cite{CS10}. More precisely, denote
the turbulent second fundamental form by $ \tilde{h}_{ij}:=h_{ij}-a
g_{ij}$, then the turbulent mean curvature $\tilde{H}=H-na$ and the
turbulent $\mbox{Gau\ss}$ curvature
$\tilde{K}=\det\{\tilde{h}_{i}^{j}\}$. In this paper the turbulent
geometric quantities are distinguished by a tilde. Compared with the
pinching condition $K(p)> \mathcal{C}H^n(p)>0$ on the initial
hypersurface in Theorem \ref{theo1.1}, which is analogous to the
initial pinching condition in Chow \cite{Cho85} and Schulze
\cite{Sch06}, it is natural to impose a pinching condition:$
\tilde{K}> C^{*}\tilde{H}^n >0$ on the initial hypersurfaces of
${\mathbb{H}}_{\kappa}^{n+1}$, where $ C^{*}$ is a suitable positive
constant. It is shown in Section\,4 that the condition $ \tilde{K}>
C^{*}\tilde{H}^n >0$ on a closed hypersurface implies in particular
the $h$-convexity of the hypersurface. The aim of this paper is to
achieve such extension of the above Theorem \ref{theo1.1} of
Cabezas-Rivas and Sinestrari \cite{CS10} in Hyperbolic case.
Precisely, we prove the following
\begin{theorem}[main theorem]\label{main theorem}
For $m\in \{1,\dots,n\}$, $m\beta \geq 1$ there exists a positive
constant $C^{*}=C^{*}(a,n, m,\beta ) < 1/n^n $ such that the
following holds: If the initial hypersurface of
${\mathbb{H}}_{\kappa}^{n+1}$ is pinched in the sense that
\begin{equation}\label{ini pinching}
\tilde{K}(p)> C^{*}\tilde{H}^n(p)>0\quad \text{for all}\quad p \in
M^n,
\end{equation}
then the flow \eqref{vpmthpcf}-\eqref{def barF} with $F$ given by
\eqref{def Hm}, has a unique and smooth solution for all times,
inequality \eqref{ini pinching} remains everywhere on the evolving
hypersurfaces $M_{t}$ for all $t>0$ and the $M_{t}$'s converge,
exponentially in the $C^{\infty}$-topology, to a geodesic sphere of
${\mathbb{H}}_{\kappa}^{n+1}$ enclosing the same volume as $M_{0}$.
\end{theorem}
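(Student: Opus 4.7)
The plan is to parallel the strategy of Cabezas-Rivas and Sinestrari \cite{CS10}, but with the second fundamental form $h_{ij}$ systematically replaced by the turbulent one $\tilde{h}_{ij}=h_{ij}-ag_{ij}$, so that horospheres play the role that hyperplanes played in the Euclidean case. First I would establish short-time existence: the pinching \eqref{ini pinching} forces all the elementary symmetric functions $H_{1},\dots,H_{m}$ to be strictly positive, the operator $-H_{m}^{\beta}$ is therefore strictly parabolic on an open neighbourhood of the initial matrix of Weingarten maps, and the nonlocal term $\bar F(t)$ causes no trouble by the standard linearisation argument for quasilinear parabolic equations with a global zero-order term.

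The heart of the proof is the \emph{preservation of the pinching} $\tilde K>C^{*}\tilde H^{n}$ along \eqref{vpmthpcf}--\eqref{def barF}. I would compute the evolution equations of $\tilde h_{ij}$, $\tilde H$ and $\tilde K$ (equivalently of $\log\tilde K-n\log\tilde H$), paying attention to the extra zero-order contributions of the form $-a^{2}g_{ij}$ produced by the ambient sectional curvature when the Codazzi and Simons identities are applied in ${\mathbb H}_{\kappa}^{n+1}$, and then appeal to the tensor maximum principle at a first touching point. The point of using $\tilde h_{ij}$ rather than $h_{ij}$ is precisely that these constant-coefficient contributions regroup into terms which are non-positive on the pinched set, thereby compensating the obstruction highlighted in the introduction (geodesic spheres drifting towards, but never reaching, $h$-convexity under outward flow). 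Choosing $C^{*}=C^{*}(a,n,m,\beta)<1/n^{n}$ sufficiently small, the remaining reaction terms are dominated by the good gradient and second-order terms, and the pinching is preserved. By the result quoted from Section~4 this in turn yields uniform $h$-convexity of every $M_{t}$.

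Once the pinching survives, I would use it together with the volume-preserving constraint to obtain two-sided bounds on $\tilde H$ along the flow. The conserved volume prevents the evolving domain $\Omega_{t}$ from either collapsing or escaping to infinity, while $h$-convexity and comparison with the largest inscribed and smallest circumscribed geodesic balls translate this into uniform inner/outer radius bounds; combined with the pinching, this gives uniform two-sided bounds on all principal curvatures. Consequently \eqref{vpmthpcf} becomes uniformly parabolic on a bounded set of admissible matrices, so Krylov--Safonov estimates followed by a standard Schauder bootstrap produce uniform $C^{k,\alpha}$ estimates for every $k$, which suffice to extend the flow for all $t>0$. Exponential convergence to a geodesic sphere enclosing the same volume as $M_{0}$ then follows by tracking the scale-invariant quantity $\tilde K/\tilde H^{n}$ (or, equivalently, the norm of the traceless part of $\tilde h_{ij}$ normalised by $\tilde H$): the maximum principle, now applied on uniformly parabolic terrain, gives exponential decay of its maximum, forcing umbilicity of the limit, which by the volume constraint must be the geodesic sphere of the same volume. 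Interpolation with the Schauder bounds upgrades convergence to the $C^{\infty}$ topology.

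The \textbf{main obstacle} I expect is the preservation-of-pinching step: one has to discover the correct algebraic form of the pinching function in terms of $\tilde h_{ij}$, and verify that the threshold $C^{*}<1/n^{n}$ can indeed be chosen uniformly in $n,m,\beta$ and $a$ so that the maximum-principle inequality closes up. The cross-terms between $\tilde h_{ij}$ and $ag_{ij}$ created by the ambient curvature $\kappa=-a^{2}$ are absent in the Euclidean setting of \cite{CS10} and produce additional reactions whose sign and size must be carefully tracked; getting these under control with a \emph{universal} constant $C^{*}$, rather than one deteriorating with $m\beta$ or with the geometry of $M_{0}$, is the delicate point of the argument.
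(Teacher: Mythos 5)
Your overall architecture coincides with the paper's: evolve the turbulent quantities $\tilde h_{ij}=h_{ij}-ag_{ij}$, preserve $\tilde K/\tilde H^{n}>C^{*}$ by the maximum principle, deduce $h$-convexity, establish long-time existence, and force umbilicity of the limit through the decay of $1/n^{n}-\tilde K/\tilde H^{n}$. However, two of your intermediate steps contain genuine gaps rather than mere omissions of detail.

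First, you claim that the inner/outer radius bounds coming from the conserved volume, ``combined with the pinching,'' yield uniform two-sided bounds on all principal curvatures. This does not follow: the pinching controls only the \emph{ratios} of the principal curvatures, and a hypersurface of bounded diameter and bounded inradius can still develop arbitrarily large curvature at some point before the barrier argument has time to act. The upper bound on the speed (hence on $H$, via $m\beta F=\dot F\lambda_i\geq\varepsilon_0 H\,\tr(\dot F)\geq \varepsilon_0 H\, m\beta F^{1-1/m\beta}$) requires a separate argument; the paper devotes all of Section~5 to it, running Tso's trick on $Z=F/(\Phi-\epsilon)$ with $\Phi=\s_{\kappa}(r_p)\langle\nu,\partial_{r_p}\rangle$ the support function, where the radius bounds enter only to guarantee that $\Phi-\epsilon$ stays uniformly positive on a time interval of uniform length. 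Without this (or an equivalent) mechanism your parabolicity and regularity claims have no foundation. Second, your ``Krylov--Safonov estimates followed by a standard Schauder bootstrap'' is exactly the route the paper points out is unavailable: for $m\beta>1$ the operator $H_m^{\beta}$ is not a concave function of $D^2r$, so Evans--Krylov/Schauder theory does not upgrade the Krylov--Safonov $C^{\alpha}$ bound on $F$ to $C^{2,\alpha}$ control of $r$. The paper instead applies Caffarelli's interior $C^{2,\alpha}$ estimate to the \emph{elliptic} equation at each fixed time for the concave, degree-one function $\phi=H_m^{1/m}$ (via a Bellman extension), treating $F^{1/m\beta}$ as a $C^{\alpha}$ right-hand side, and only then recovers time regularity. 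A related point: for the exponential convergence you need a uniform positive lower bound on $\tilde H$ for all time (the naive maximum principle only gives $\min\tilde H\geq C_5e^{-C_6t}$); the paper obtains this from Makowski's parabolic Harnack inequality applied to $F-a^{m\beta}$ together with the touching-sphere lower bound on $\sup_{M_t}F$, a step absent from your outline.
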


Our analysis follows the framework of \cite{CS10}, we make
modifications to consider our problem for the background space. The
rest of the paper is organized as follows: Section\,2 first gives
some useful preliminary results employed in the remainder of the
paper. Section\,3 contains details of the short time existence of
the flow \eqref{vpmthpcf}-\eqref{def barF} and the induced evolution
equations of some important geometric quantities and the
corresponding turbulent quantities, this requires only minor
modifications of Euclidean case due to the background curvature. In
Section\,4  applying the maximum principle to the evolution equation
of the turbulent quantity $\tilde{K}/\tilde{H}^n$ gives that if the
initial hypersurface is pinched good enough then this is preserved
for $t>0$ as long as the flow \eqref{vpmthpcf}-\eqref{def barF}
exists. This is a fundamental step in our procedure as in most of
the literature quoted above. Furthermore, Section\,5 proves the
uniform boundedness of the speed $F$ by following a method which was
firstly used by Tso \cite{Tso}. Using more sophisticated results for
fuly nonlinear elliptic and parabolic partial differential
equations, Section\,6 obtains uniform bounds on all derivatives of
the curvature and proves long time existence of the flow
\eqref{vpmthpcf}-\eqref{def barF}. Finally Section\,7, following the
idea in \cite{Mak12}, obtains the lower bound for $\tilde{H}$, which
we infer from a Harnack inequality due to Makowski \cite{Mak12}, the
estimates obtained so far will then allow us to prove that these
elvoving hypersurfaces converge to a geodesic sphere of
${\mathbb{H}}_{\kappa}^{n+1}$ smoothly and exponentially.

\section{Notation and preliminary results}

From now on, use the same notation as in \cite{CR-M09,Hui84,Sch05}
for local coordinates $\{x^{i}\}$, $1\leq i \leq n$, near $p\in
M^{n}$ and $\{y^{\alpha}\}$, $0 \leq \alpha, \beta \leq n$, near
$F(p)\in {\mathbb{H}}_{\kappa}^{n+1}$. Denote by a bar all
quantities on ${\mathbb{H}}_{\kappa}^{n+1}$, for example by
$\bar{g}=\{\bar{g}_{\alpha \beta}\}$ the metric, by
$\bar{g}^{-1}=\{\bar{g}^{\alpha \beta}\}$ the inverse of the metric,
by $\bar{\nabla}$ the covariant derivative, by $\bar{\Delta}$ the
rough Laplacian, and by
$\bar{\R}=\{\bar{\R}_{\alpha\beta\gamma\delta}\}$ the Riemann
curvature tensor. Components are sometimes taken with respect to the
tangent vector fields $\partial_{\alpha}(=\frac{\partial}{\partial
y^{\alpha}})$ associated with a local coordinate $\{y^{\alpha}\}$
and sometimes with respect to a moving orthonormal frame
$e_{\alpha}$, where $\bar{g}(e_{\alpha},
e_{\beta})=\delta_{\alpha\beta}$. The corresponding geometric
quantities on $M^{n}$ will be denoted by $g$ the induced metric, by
$g^{-1}$ the inverse of $g$, $\nabla, \Delta, \R,
\partial_{i}$ and $e_{i}$ the covariant derivative, the rough Laplacian, the curvature tensor,
the natural frame fields and a moving orthonormal frame field,
respectively. Then further important quantities are the second
fundamental form $A(p)=\{h_{ij}\}$ and the Weingarten map
$\mathscr{W}=\{g^{ik}h_{kj}\}=\{h^{i}_{j}\}$ as a symmetric operator
and a self-adjoint operator respectively. The eigenvalues
$\lambda_{1}(p)\leq \cdots \leq \lambda_{n}(p)$ of $\mathscr{W}$ are
called the principal curvatures of $X(M^{n})$ at $X(p)$. The mean
curvature is given by
\[H:=\tr_{g}{\mathscr{W}}=h^{i}_{i}=\sum_{i=1}^{n}\lambda_{i},\]
the square of the norm of the second fundamental form by
$$\bigl|A\bigr|^{2}:=\tr_{g}({\mathscr{W}^{t}\mathscr{W}})=h^{i}_{j}h^{j}_{i}=h^{ij}h_{ij}=\sum_{i=1}^{n}\lambda^{2}_{i},$$
and the Gau\ss-Kronecker curvature by
$$K:=\det(\mathscr{W})=\det\{h^{i}_{j}\}=\frac{\det\{h_{ij}\}}{\det\{g_{ij}\}}=\prod_{i=1}^{n}\lambda_{i}.$$
More generally, the $m$th elementary symmetric functions $E_{m}$ are
given by
$$E_{m}(\lambda)=\sum\limits_{1 \leq i_{1}< \cdots < i_{m} \leq n}\lambda_{i_{1}}\cdots
\lambda_{i_{m}} =\frac{1}{m!}\sum\limits_{
i_{1},\ldots,i_{m}}\lambda_{i_{1}}\cdots \lambda_{i_{m}},\ \ {\hbox
{for}}\  \lambda = (\lambda_{1},\ldots,\lambda_{n})\in
\mathbb{R}^{n},$$ and the $m$th mean curvature $H_{m}$ are given by
\eqref{def Hm}. Since $H_{m}$ is homogeneous of degree $m$, the
speed $F$ is homogeneous of degree $m\beta$ in the curvatures
$\lambda_{i}$. Denote the vector $(\lambda_{1},\dots, \lambda_{n})$
of $\mathbb{R}^{n}$ by $\lambda$ and the positive cone by
$\Gamma_{+}\subset \mathbb{R}^{n}$, i.e.
\[
\Gamma_{+}=\{\lambda=(\lambda_{1},\dots, \lambda_{n}):\lambda_{i}>0,
\forall\ i\}.
\]
It is clear that $H$, $K$, $H_{m}$, $F$ may be viewed as functions
of $\lambda$, or as functions of $A$, or as functions of
$\mathscr{W}$, or also functions of space and time on $M_{t}$. Since
the differentiability properties of these functions are the same in
our setting, we do not distinguish between these notions and write
always these functions the same letters in all cases. We use the
notation
\[
\dot{F}^{i}:=\frac{\partial F}{\partial \lambda_{i}},\ \
\dot{F}^{ij}:=\frac{\partial F}{\partial h_{ij}},\ \ \text{and}\ \
\dot{F}_{i}^{j}:=\frac{\partial F}{\partial h^{i}_{j}}.
\]
If $B$, $C$ are matrices, we write
\[
\dot{F}B = \dot{F}(B):=\dot{F}_{i}^{j}B_{j}^{i}\ \ \text{and} \ \
\ddot{F}(B,C):=\frac{\partial^{2} F}{\partial h^{j}_{i}\partial
h^{l}_{k}}B_{i}^{j}C_{k}^{l}.
\]
$\Hess_{\nabla}$ will denote the second tensorial derivative as a
$2$-covariant tensor. If it is contracted by the standard metric $g$
we have the standard Laplace-Beltrami operator: for any tensor $T$,
we write
\[
g^{-1}\Hess_{\nabla}(T) = g^{ij}\nabla_{i}\nabla_{j}T = \Delta T.
\]
More generally, given a $(2,0)$-tensor $w$, we denote
\[
\Delta_{w} T:= w^{ij}\nabla_{i}\nabla_{j}T.
\]
We also denote
\[
|B|^{2}_{w}:=w^{ij} B_{i}B_{j}.
\]
 Finally, if $F\in C^{2}(\Gamma_{+})$ is concave, then $F$ is also
concave as a curvature function depending on $\{h^{i}_{j}\}$.

We note some important properties of $H_{m}$ (see \cite{CS10} for a
simple derivation).
\begin{lemma}\label{property Hm}
Let $1\leq m \leq n$ be fixed, the following hold
\begin{itemize}
\item[i)] The $m$th roots $H_{m}^{1/m}$ are
concave in $\Gamma_{+}$.
\item[ii)] For $\forall i $, $\frac{\partial H_{m}}{\partial \lambda_{i}}(\lambda)>0$,
 where $\lambda \in \Gamma_{+}$.
 \item[iii)] $H_m^{1/m} \leq \ds\frac{H}{n}$; equivalently, $F \leq \left(\ds\frac{H}{n}\right)^{m \beta}$.
 \item[iv)]   $\tr(\dot F) \geq m \, \beta \, F^{1 -  \frac{1}{m\beta}}$.
\end{itemize}
\end{lemma}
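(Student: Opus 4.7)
The plan is to handle the four items in turn, noting that (i) is the deep ingredient from which the rest follow by elementary manipulations.

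For (i), the concavity of $H_m^{1/m}$ on $\Gamma_+$ is a classical theorem of G\aa{}rding: the polynomial $E_m$ is hyperbolic with respect to the direction $(1,\dots,1)$, and the $m$th root of a hyperbolic polynomial is concave on the connected component of its positivity set containing that direction, which is exactly $\Gamma_+$. I would either cite this directly, or give a self-contained argument by induction on $m$ using the Newton--Maclaurin inequality $H_{m-1}H_{m+1}\leq H_m^2$ on $\Gamma_+$, from which concavity of $H_m^{1/m}$ is extracted by a Cauchy--Schwarz-type computation on the Hessian.

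Item (ii) is immediate: on $\Gamma_+$, the partial derivative $\partial H_m/\partial \lambda_i$ is a positive multiple of $\sum_{i_1<\cdots<i_{m-1},\,i_j\neq i}\lambda_{i_1}\cdots\lambda_{i_{m-1}}$, a sum of strictly positive monomials. Item (iii) is precisely Maclaurin's inequality $H_1\geq H_2^{1/2}\geq\cdots\geq H_n^{1/n}$ on $\Gamma_+$, which yields $H_m^{1/m}\leq H_1=H/n$; raising to the $m\beta$-th power gives the equivalent estimate for $F$.

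For (iv), set $G:=H_m^{1/m}$, which is homogeneous of degree one and, by (i), concave on $\Gamma_+$. Euler's identity gives $\sum_i \dot G^i\,\lambda_i=G(\lambda)$, while the tangent-plane inequality for a concave function evaluated between $\lambda$ and $(1,\dots,1)$ yields $G(1,\dots,1)\leq G(\lambda)+\sum_i \dot G^i(1-\lambda_i)=\sum_i \dot G^i=\tr(\dot G)$. Since $G(1,\dots,1)=H_m(1,\dots,1)^{1/m}=1$, this gives $\tr(\dot G)\geq 1$. Writing $F=G^{m\beta}$ and differentiating, $\dot F^i=m\beta\,G^{m\beta-1}\dot G^i$, so $\tr(\dot F)=m\beta\,G^{m\beta-1}\tr(\dot G)\geq m\beta\,F^{1-1/(m\beta)}$, as required. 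The only real obstacle in the whole lemma is (i); once that concavity is in hand, (ii)--(iv) are short bookkeeping, with (iv) being essentially a concavity-plus-Euler argument applied at the point $(1,\dots,1)$.
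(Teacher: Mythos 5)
Your proposal is correct, and it follows the standard derivation: the paper itself does not prove this lemma but refers the reader to Cabezas-Rivas and Sinestrari [CS10], where exactly this argument appears — G\aa{}rding/Maclaurin for (i)--(iii), and for (iv) the concavity tangent-plane inequality for $G=H_m^{1/m}$ evaluated at $(1,\dots,1)$ combined with Euler's identity and the chain rule for $F=G^{m\beta}$. Nothing to add.
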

The following algebraic property proved by Schulze in (\cite{Sch06},
Lemma\,2.5) will be needed in the last section.

\begin{lemma}\label{important inequality}
For any $\varepsilon> 0$ assume that $\lambda_{i} \geq \varepsilon H
> 0$, $i= 1, \ldots, n$, at some point of an $n$-dimensional
hypersurface. Then at the same point there exists a $\delta=
\delta(\varepsilon,n)> 0$ such that
\[
\frac{n\big|A\big|^{2}-H^{2}}{H^{2}}\geq \delta \left(
\frac{1}{n^n}-\frac{K}{H^n}\right).
\]
\end{lemma}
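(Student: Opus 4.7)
The plan is to reduce the inequality to a statement about dimensionless ratios on a compact subset of the standard simplex, and then to win by continuity plus a Taylor expansion at the barycenter.

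First I would homogenize by introducing $\mu_i := \lambda_i / H$, so that $\sum_i \mu_i = 1$ and the pinching hypothesis $\lambda_i \geq \varepsilon H$ becomes $\mu_i \geq \varepsilon$. Denote the compact set
\[
K_\varepsilon := \Bigl\{\mu \in \mathbb{R}^n : \mu_i \geq \varepsilon,\ \sum_i \mu_i = 1\Bigr\}.
\]
A direct computation using $H = \sum_i \lambda_i$ and $|A|^2 = \sum_i \lambda_i^2$ gives the identity
\[
\frac{n|A|^2 - H^2}{H^2} = n\sum_i \mu_i^2 - 1 = \sum_{i<j}(\mu_i - \mu_j)^2 =: f(\mu),
\]
while
\[
\frac{1}{n^n} - \frac{K}{H^n} = \frac{1}{n^n} - \prod_i \mu_i =: g(\mu).
\]
By AM--GM, $g \geq 0$ on $K_\varepsilon$, with equality exactly at the barycenter $\mu_0 := (1/n, \dots, 1/n)$, and clearly $f\geq 0$ with the same unique zero. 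The claim then amounts to showing that $f/g$ is bounded below by a positive constant $\delta(\varepsilon,n)$ on $K_\varepsilon \setminus \{\mu_0\}$, and by compactness this reduces to checking that $\liminf_{\mu\to\mu_0} f(\mu)/g(\mu) > 0$.

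To verify that limit, I would Taylor-expand both functions around $\mu_0$. Writing $\mu_i = 1/n + \eta_i$ with $\sum_i \eta_i = 0$, one finds immediately
\[
f(\mu) = \sum_{i<j}(\eta_i - \eta_j)^2 = n\sum_i \eta_i^2,
\]
while setting $u_i := n\eta_i$ gives $\prod_i \mu_i = n^{-n}\prod_i(1+u_i)$, and expanding up to second order,
\[
\prod_i(1+u_i) = 1 + \sum_i u_i + \sum_{i<j} u_i u_j + O(|u|^3) = 1 - \tfrac{1}{2}\sum_i u_i^2 + O(|u|^3),
\]
since $\sum_i u_i = 0$ forces $\sum_{i<j} u_i u_j = -\tfrac{1}{2}\sum_i u_i^2$. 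Therefore
\[
g(\mu) = \frac{1}{2n^{n-2}}\sum_i \eta_i^2 + O(|\eta|^3),
\]
so $f(\mu)/g(\mu) \to 2n^{n-1}$ as $\mu \to \mu_0$. In particular the quotient extends continuously to a strictly positive function on the compact set $K_\varepsilon$, and we can take $\delta$ to be its minimum.

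The only real subtlety is making sure $f$ and $g$ vanish to the same order at $\mu_0$; this is precisely what the Taylor computation above confirms. Everything else is soft (continuity and compactness), and the pinching hypothesis $\mu_i \geq \varepsilon$ enters only to keep the argument on a compact set that avoids the boundary of the positive cone, where $g$ could otherwise also vanish through some $\mu_i\to 0$ without $f$ vanishing.
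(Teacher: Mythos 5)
Your argument is correct. Note that the paper does not actually prove this lemma; it simply cites Schulze [Sch06, Lemma~2.5], and your proof (normalize by $H$, pass to the compact slice of the simplex, and check via a second-order Taylor expansion that both sides vanish to exactly the same order at the barycenter, so that the quotient extends continuously and positively) is essentially the standard argument behind that reference. All the computations check out: $n\sum\mu_i^2-(\sum\mu_i)^2=\sum_{i<j}(\mu_i-\mu_j)^2$, the identity $\sum_{i<j}u_iu_j=-\tfrac12\sum_iu_i^2$ under $\sum_iu_i=0$, and the limit $f/g\to 2n^{n-1}$ are all right, and the compactness step is sound. The only inaccuracy is in your closing remark: as some $\mu_i\to 0$ one has $\prod_j\mu_j\to 0$, so $g\to 1/n^n>0$ rather than to $0$, and $f$ stays bounded away from $0$ there as well (the $\mu_j$ cannot all be equal on the boundary of the simplex); so the quotient causes no trouble on the full closed simplex, and your argument would in fact yield a $\delta$ depending on $n$ alone. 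The hypothesis $\lambda_i\geq\varepsilon H>0$ is what guarantees $H>0$ (so the normalization $\mu_i=\lambda_i/H$ and the sign of $K/H^n$ are under control); this does not affect the validity of your proof.
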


Consider the functions as in \cite{CR-M07}:
\begin{align*}
\s_{\kappa}(x)=\frac{\sinh(\sqrt{|\kappa|}x)}{\sqrt{|\kappa|}}&=\frac{\sinh(ax)}{a},&
\cc_{\kappa}(r)&=\s'_{\kappa}(x),\\
\ta_{\kappa}(x)&=\frac{\s_{\kappa}(x)}{\cc_{\kappa}(x)},&
\co_{\kappa}(x)&=\frac{1}{\ta_{\kappa}(x)}.
\end{align*}

Denote $r_{p}$ the function ``distance to $p$" in
${\mathbb{H}}_{\kappa}^{n+1}$ and use the notation
$\partial_{r_{p}}=\bar{\nabla}r_{p}$. And denote the component of
$\partial_{r_{p}}$ by $\partial^{\top}_{r_{p}}$ tangent to $M_{t}$,
which satisfies $\partial_{r_{p}}=\nabla(r_{p}|_{M^{n}})$. Define
the inner radius $\rho_{-}$ by 
\begin{align*}
\rho_{-}(t)&=\sup\{r: B_{r}(q)\ \text{is enclosed by}\ M_{t}\
\text{for some}\ q \in {\mathbb{H}}_{\kappa}^{n+1}\}
\end{align*}
where $B_{r}(q)$ is the geodesic ball of radius $r$ with centered at
$q$. The following well-known result for $h$-convex hypersurfaces in
${\mathbb{H}}_{\kappa}^{n+1}$ will be applied in later sections.
\begin{lemma}\label{$h$-convex}
Let $\Omega$ be a compact $h$-convex domain, $o$ the center of an
inball of $\Omega$, $\rho_{-}$ its inner radius. 
Furthermore let
$\tau:=\ta_{\kappa}(\frac{a\rho_{-}}{2})$, then
\begin{itemize}
\item[i)] the maximal distance $\max d(o, \partial \Omega)$ between $o$ and
the points in $\partial \Omega$ satisfies the inequality
$$\maxd(o,\partial \Omega) \leq \rho_{-} + a\frac{\ln(1+\sqrt{\tau})}{1+\tau}<\rho_{-} +a\ln 2. $$
\item[ii)] For any interior point $p$ of $\Omega$, $\langle \nu,
\partial_{r_{p}}\rangle \geq a\ta_{\kappa}(\dist((p,\partial \Omega))$, where $\dist$ denotes the distance in
the ambient space ${\mathbb{H}}_{\kappa}^{n+1}$ .
 \end{itemize}
\end{lemma}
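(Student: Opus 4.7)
The plan is to derive both inequalities from the support-horosphere characterization of $h$-convexity together with the classical Hessian identity
\[
\bar{\nabla}^{2}b=a(\bar{g}-db\otimes db)
\]
satisfied by any Busemann function $b$ on $\mathbb{H}_{\kappa}^{n+1}$, which follows from $|\bar{\nabla}b|\equiv 1$ and the constancy of the sectional curvature $-a^{2}$.

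For (ii), I would fix $p\in\Omega^{\circ}$ and $q\in M$. The $h$-convexity at $q$ produces a horosphere $\mathcal{H}$ through $q$ with $\Omega\subset\mathscr{H}$; normalize the Busemann function $b$ of the associated ideal point so that $b(q)=0$ and $b<0$ on $\mathscr{H}$. Smoothness of $M$ and $\mathcal{H}$ at $q$ together with $\Omega\subset\mathscr{H}$ forces $T_{q}M=T_{q}\mathcal{H}$, hence $\nu(q)=\bar{\nabla}b(q)$. Along the unit-speed geodesic $\gamma:[0,d]\to\mathbb{H}_{\kappa}^{n+1}$ from $p$ to $q$, the function $\phi(t):=b(\gamma(t))$ then satisfies the Riccati-type ODE $\phi''=a(1-(\phi')^{2})$ (apply the Hessian identity to $\gamma'$), whose non-constant solutions are $\phi'(t)=\tanh(at+C)$. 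The data $\phi(d)=0$ and $\phi(0)=-\dist(p,\mathcal{H})\leq -\dist(p,M)$ (the inequality because any geodesic from $p\in\Omega^{\circ}\subset\mathscr{H}^{\circ}$ reaching $\mathcal{H}$ must first cross $M$) combined with the integrated ODE
\[
\frac{\cosh(ad+C)}{\cosh(C)}=e^{-a\phi(0)}\geq e^{a\dist(p,M)}
\]
reduce, via $\cosh(A-B)=\cosh A\cosh B-\sinh A\sinh B$ and the elementary inequality $e^{u}+e^{2x-u}\geq 2e^{x}\geq e^{x}+e^{-x}$ (AM--GM plus $x\geq 0$), to $\tanh(ad+C)\geq\tanh(a\dist(p,M))$. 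Since $\phi'(d)=\langle\nu(q),\partial_{r_{p}}(q)\rangle$ and $a\,\ta_{\kappa}(x)=\tanh(ax)$, this is exactly (ii).

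For (i), let $q\in M$ realize $\maxd(o,M)$ and let $p_{0}\in M\cap\partial B_{\rho_{-}}(o)$ be a tangency point of the inball (which exists by maximality of $\rho_{-}$). At $p_{0}$ the support horosphere $\mathcal{H}_{0}$ must be tangent to $\partial B_{\rho_{-}}(o)$ from outside; this pins down its ideal point $\xi_{0}$ as the endpoint at infinity of the geodesic ray from $o$ through $p_{0}$ and gives $\dist(o,\mathcal{H}_{0})=\rho_{-}$ exactly. At $q$ a support horosphere $\mathcal{H}_{q}$ satisfies $\dist(o,\mathcal{H}_{q})\geq\rho_{-}$. In horospherical coordinates about the perpendicular from $o$ to $\mathcal{H}_{q}$, the hyperbolic distance formula reads
\[
\cosh(a\,\dist(o,q))=\cosh(a\,\dist(o,\mathcal{H}_{q}))+\tfrac{1}{2}a^{2}r^{2}e^{-a\,\dist(o,\mathcal{H}_{q})},
\]
with $r$ the horocyclic distance from the foot of the perpendicular to $q$. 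The simultaneous constraint $q\in\mathscr{H}_{0}$ (valid because $q\in\Omega\subset\mathscr{H}_{0}$) bounds $r$ in terms of $\rho_{-}$; passing to the half-angle variable $\tau=\ta_{\kappa}(\rho_{-}/2)$ converts the resulting inequality into the stated explicit estimate, and a one-variable analysis of $\tau\mapsto\ln(1+\sqrt{\tau})/(1+\tau)$ yields the uniform upper bound involving $\ln 2$.

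The main obstacle is the bookkeeping in (i): one has to track two distinct support horospheres ($\mathcal{H}_{0}$ at the inball tangency and $\mathcal{H}_{q}$ at the far point) together with the inball, and distill the horocyclic-distance bound out of the geometric intersection $q\in\mathscr{H}_{0}\cap\mathcal{H}_{q}$; keeping the $a$-scalings and the half-angle substitution consistent is where most of the care is needed. Part (ii), by contrast, is essentially mechanical once the Hessian identity and the ODE for $\phi$ are in place.
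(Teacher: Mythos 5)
The paper itself proves this lemma only by citing \cite{Bor} (Theorem 3.1), so your attempt must stand on its own. Part (ii) of your argument is essentially correct: the identification $\nu(q)=\bar{\nabla}b(q)$ from tangency of $M$ and the support horosphere, the Riccati equation $\phi''=a(1-(\phi')^{2})$, the estimate $\phi(0)=-\dist(p,\mathcal{H})\leq-\dist(p,\partial\Omega)$, and the final reduction all check out. (The cleanest way to finish the reduction, which you leave cryptic: expanding $\cosh(u-s)=\cosh u\cosh s-\sinh u\sinh s$ in $\cosh u\geq e^{y}\cosh(u-s)$, with $u=ad+C$, $s=ad>0$, $y=a\dist(p,\partial\Omega)\geq0$, gives $\tanh u\geq(\cosh s-e^{-y})/\sinh s$, and this is $\geq\tanh y$ because it is equivalent to $\cosh(s-y)\geq(1+e^{-2y})/2$, which holds for $y\geq 0$. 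You should also dispose of the constant solutions $\phi'\equiv\pm1$ of the Riccati equation.)

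Part (i), however, has a genuine gap. The two constraints you extract --- $q\in\mathscr{H}_{0}$ for the support horoball at \emph{some} inball contact point $p_{0}$, and $q\in\mathcal{H}_{q}$ with $\dist(o,\mathcal{H}_{q})\geq\rho_{-}$ --- do not bound $\dist(o,q)$ from above. The horoball $\mathscr{H}_{0}$ is unbounded precisely in the direction of its ideal point $\xi_{0}$, which is the direction of $p_{0}$ as seen from $o$, and nothing in your setup prevents $q$ from lying far out in that direction; the second constraint only yields the lower bound $\dist(o,q)\geq\rho_{-}$. Indeed, since $q$ maximizes $r_{o}$ on $\partial\Omega$, the perpendicular from $o$ to $\mathcal{H}_{q}$ has foot $q$ itself, so your horocyclic coordinate $r$ vanishes and your distance formula degenerates to $\dist(o,q)=\dist(o,\mathcal{H}_{q})$, giving nothing. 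The missing ingredient is the defining property of the incenter: the contact set of the inball with $\partial\Omega$ is not contained in any open half-space of directions at $o$ (otherwise one could displace $o$ and enlarge the inball). This allows one to choose, for the given $q$, a contact point $p_{0}$ such that the angle at $o$ between the geodesics $op_{0}$ and $oq$ is at least $\pi/2$; only then does $q\in\mathscr{H}_{0}$, intersected with the cone of directions making an obtuse angle with that of $p_{0}$, give a bounded region whose maximal distance to $o$ is $\rho_{-}$ plus the explicit correction term. As a side remark, the constant as printed in the lemma appears to be a typo: it is inconsistent with the definition of $\xi$ in the lemma establishing \eqref{bound rho}, and following \cite{Bor} the correction term should have the form $\ln\frac{(1+\sqrt{\tau})^{2}}{1+\tau}$ (suitably scaled by $a$) with $\tau=\ta_{\kappa}$ of half the inradius.
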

\begin{proof}
 See (\cite{Bor}, Theorem\,3.1) for the proof.
\end{proof}

Our analysis relies on many a priori estimates on the H\"older norms
of the solutions to elliptic and parabolic partial differential
equations in the Euclidean space and regularity issues, which leads
to existence for all time and the convergence. We know that, in the
case of a function depending on space and time, there is a suitable
definition of H\"older norm which is adapted to the purposes of
parabolic equations (see e.g. \cite{Lieb}). In addition to the
standard Schauder estimates for linear equations, we use in the
paper some more recent results which are collected here. The
estimates below hold for suitable classes of weak solutions; for the
sake of simplicity, we state them in the case of a smooth classical
solution, which is enough for our purposes. These are used, for
example, by  Cabezas-Rivas and Sinestrari \cite{CS10}.

Given $r>0$, we denote by $B_r$ the ball of radius $r>0$ in
$\mathbb{R}^n$ centered at the origin. First we recall a well known
result of Krylov and Safonov, which applies to linear parabolic
equations of the form
\begin{equation} \label{ec_par} \left(a^{ij}(x, t) D_{i} D_j  +
b^i(x, t) D_i + c(x, t) - \frac{\partial}{\partial t}\right) u = f
\end{equation}
in $B_r \times [0,T]$, for some $T>0$. We assume that $a^{ij} =
a^{ji}$ and that $a^{ij}$ is elliptic; that is, there exist two
constants $\lambda, \, \Lambda > 0$ such that
\begin{equation}
\label{SP} \lambda |v|^2 \leq a^{ij}(x,t)v_iv_j \leq \Lambda |v|^2
\end{equation}
for all $v \in \mathbb{R}^n$ and all $(x, t) \in B_r \times [0,T]$.
Then the following estimate holds \cite[Theorem 4.3]{KS}:

\begin{theorem} \label{krylov-safonov} Let $u \in C^2(B_r \times [0,T])$ be a solution of
\eqref{ec_par} with measurable coefficients, satisfy \eqref{SP} and
$$|b^i|, \, |c|  \leq K_1 \qquad \text{for all } i = 1, \ldots, n,$$
for some $K_1>0$. Then, for any  $0<r'<r$ and any $0<\delta <T$ we
have
$$\|u\|_{C^{\alpha}(B_{r '} \times [\delta, T])} \leq C\left(\|u\|_{C(B_r \times [0,T])} + \|f\|_{L_\infty(B_r \times [0,T])}\right)$$
for some constants $C > 0$ and $\alpha \in (0,1)$ depending on $n$,
$\lambda$, $\Lambda$, $K_1$, $r$, $r'$ and $\delta$. \end{theorem}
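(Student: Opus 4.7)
The plan is to follow the classical Krylov-Safonov strategy: reduce H\"older regularity for parabolic equations with merely measurable principal coefficients to a Harnack inequality for nonnegative solutions, and reduce that Harnack inequality in turn to a measure-theoretic decay estimate obtained via the parabolic Aleksandrov-Bakelman-Pucci (ABP) maximum principle.

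First I would normalize. A parabolic rescaling $(x,t)\mapsto((x-x_0)/r,(t-t_0)/r^2)$ reduces the setup to the unit cylinder and transforms $b^i$, $c$, $f$ in a controlled way, so that the final constants' dependence on $K_1$, $r$, $r'$, $\delta$ is just bookkeeping. A standard cutoff/splitting argument absorbs the lower-order terms and the right-hand side into the $L^\infty$ control, so it suffices to handle subsolutions (resp.\ supersolutions) of $a^{ij}D_iD_j u - \partial_t u = g$ with $\|g\|_\infty$ bounded.

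The heart of the proof is the parabolic ABP estimate: for a subsolution $v$ of $a^{ij}D_iD_j v - \partial_t v \leq g$ on $Q=B_1\times(0,1]$,
$$\sup_Q v \;\leq\; \sup_{\partial_p Q} v + C(n,\lambda,\Lambda)\,\|g^+\|_{L^{n+1}(Q)},$$
where $\partial_p Q$ denotes the parabolic boundary. Applied to well-chosen barriers this yields the fundamental measure-decay lemma: there exist $M>1$ and $\eta\in(0,1)$ such that whenever $u\geq 0$ is a supersolution in a parabolic cylinder with $\inf_{Q^-}u\leq 1$, one has $|\{u>M^k\}\cap Q| \leq (1-\eta)^k|Q|$ for every $k\geq 1$. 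Iterating via a parabolic Calder\'on-Zygmund decomposition (using cubes adapted to the $x\leftrightarrow\sqrt{t}$ scaling) produces an $L^\varepsilon$-estimate for nonnegative supersolutions; combined with a Moser-type $L^p\to L^\infty$ bound for subsolutions this delivers the full Harnack inequality $\sup_{Q^-}u \leq C\bigl(\inf_{Q^+}u + \|f\|_\infty\bigr)$ for solutions of \eqref{ec_par}.

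H\"older continuity is then extracted by the classical oscillation-decay argument: at any $(x_0,t_0)\in B_{r'}\times[\delta,T]$, apply the Harnack inequality to $u-\inf u$ and $\sup u-u$ on nested parabolic cylinders; each rescaling contracts the oscillation by a fixed factor $<1$, which is equivalent to a $C^\alpha$-estimate with $\alpha\in(0,1)$ depending on the listed constants. The main obstacle is engineering the measure-decay lemma out of the parabolic ABP inequality: it requires delicate barrier constructions and a Vitali/Calder\'on-Zygmund covering that genuinely respects parabolic scaling, and this is precisely what distinguishes the parabolic Krylov-Safonov estimate from its elliptic ancestor.
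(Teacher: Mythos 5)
This theorem is not proved in the paper at all: it is quoted verbatim as a known background result, namely Theorem 4.3 of Krylov--Safonov \cite{KS}, so there is no internal argument to compare your proposal against. Your outline is the standard modern route to that result (parabolic ABP maximum principle $\Rightarrow$ barrier-based measure-decay lemma $\Rightarrow$ parabolic Calder\'on--Zygmund iteration giving an $L^{\varepsilon}$ estimate for supersolutions $\Rightarrow$ local maximum principle for subsolutions $\Rightarrow$ Harnack inequality $\Rightarrow$ oscillation decay $\Rightarrow$ interior $C^{\alpha}$ bound), and the exponent $n+1$ in the ABP estimate and the time-ordering of the cylinders in the Harnack inequality are stated correctly. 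Be aware, though, that what you have written is a roadmap rather than a proof: the two steps you yourself flag --- the barrier construction feeding the measure-decay lemma and the parabolic covering argument --- are exactly where all the work lies, and a referee asking for a self-contained proof would require them in full. Since the paper's intent is only to record the estimate for later use (it is applied in Section 6 to the evolution equation for $F$), the appropriate ``proof'' here is the citation itself; if you do want to expand it, your skeleton is the right one, but you should also note explicitly that the estimate up to the terminal time $t=T$ (not just in the interior in time) is legitimate because the oscillation-decay argument only uses cylinders of the form $B_{\rho}(x_{0})\times(t_{0}-\rho^{2},t_{0}]$ lying below the point under consideration.
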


Next we cite a result of Caffarelli to obtain spatial $C^{2,\alpha}$
estimates for $u$.  We consider the second-order, fully nonlinear
elliptic equations
\begin{equation}
\label{fully nonlinear equations} G(D^2u(x),x)=f(x), \qquad x \in
B_r.
\end{equation} Here
$G:{\mathcal S} \times B_r \to \mathbb{R}$, where $\mathcal S$ is
the set of the symmetric $n \times n$ matrices. The nonlinear
operator $G$ is called elliptic if there exist $\Lambda \geq \lambda
>0$ such that
\begin{equation} \label{elliptic condition} \lambda ||B|| \leq G(A+B,x) - G(A,x)
\leq \Lambda ||B||
\end{equation} for any $x \in B_r$ and any pair $A,B \in \mathcal S$ such that $B$
is nonnegative definite.

\begin{theorem}  \label{Caffarelli} Let $u \in C^2(B_r)$ be a solution of
\eqref{fully nonlinear equations}, where $G$ is continuous and
satisfies \eqref{elliptic condition}. Suppose in addition that $G$
is concave with respect to $D^2u$ for any $x \in B_r$. Then there
exists $\bar \alpha \in \,(0,1)$ with the following property: if,
for some $K_2>0$ and $\alpha \in \,(0,\bar \alpha)\,$, we have that
$f \in C^\alpha(\Omega)$ and that
$$
G(A,x) - G(A,y) \leq K_2 |x-y|^{\alpha} (||A||+1), \qquad x,y \in
B_r, \  A \in \mathcal S
$$
then, for any $0<r'<r$, we have the estimate
$$\|u\|_{C^{2,\alpha}(B_{r'})} \leq C(||u||_{C(B_r)}+||f||_{C^\alpha(B_r)}+1)$$
where $C > 0$ only depends on $n$, $\lambda$, $\Lambda$, $K_2$, $r$
and $r'$. \end{theorem}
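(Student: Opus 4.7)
The plan is to follow Caffarelli's classical perturbative strategy: at every small scale around each point $x_0 \in B_{r'}$, approximate the solution $u$ of the $x$-dependent equation by a solution $v$ of the frozen-coefficient equation $G(\cdot,x_0)=\mathrm{const}$, for which the Evans--Krylov theorem yields interior $C^{2,\bar\alpha}$ regularity. The concavity hypothesis on $G$ enters precisely here, as Evans--Krylov requires it. A geometric iteration then upgrades this local approximation into a pointwise $C^{2,\alpha}$ bound with decay of order $r^{2+\alpha}$ uniform in $x_0$, which by the Campanato characterization of H\"older spaces is equivalent to the claimed estimate.

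First I would reduce to a normalized problem: after dividing $u$ and $f$ by $\|u\|_{C(B_r)}+\|f\|_{C^{\alpha}(B_r)}+1$ and rescaling, I may assume $\|u\|_{C(B_1)}\le 1$ and $\|f\|_{C^\alpha(B_1)}\le 1$, so it suffices to produce an absolute bound on $\|u\|_{C^{2,\alpha}(B_{r'})}$. Fix $x_0\in B_{r'}$ and a small $\rho>0$, and solve the Dirichlet problem
\[
G(D^2 v, x_0) = G(D^2u(x_0),x_0) \quad\text{in } B_\rho(x_0), \qquad v = u \quad\text{on }\partial B_\rho(x_0).
\]
Since $G(\cdot,x_0)$ is concave and uniformly elliptic, Evans--Krylov provides a universal $\bar\alpha\in(0,1)$ depending only on $n,\lambda,\Lambda$ such that, for all $r\le\rho/2$,
\[
\inf_{P\in\mathcal{P}_2}\|v-P\|_{L^\infty(B_r(x_0))}\le C_0\Bigl(\tfrac{r}{\rho}\Bigr)^{2+\bar\alpha}\|v\|_{L^\infty(B_\rho(x_0))},
\]
where $\mathcal{P}_2$ denotes the space of quadratic polynomials.

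Next I would estimate $u-v$. The difference satisfies, in the viscosity sense, a linear equation obtained by writing
\[
0=\bigl(G(D^2 u,x)-G(D^2 v,x)\bigr)+\bigl(G(D^2 v,x)-G(D^2 v,x_0)\bigr)-\bigl(f(x)-f(x_0)\bigr),
\]
whose first bracket is recast by \eqref{elliptic condition} as a linear operator in $D^2(u-v)$ with bounded elliptic coefficients, while the last two brackets are controlled in $L^\infty(B_\rho(x_0))$ by $K_2\rho^\alpha(1+\|D^2 v\|_{L^\infty})+[f]_\alpha\rho^\alpha$. The Alexandrov--Bakelman--Pucci maximum principle then yields
\[
\|u-v\|_{L^\infty(B_\rho(x_0))}\le C\rho^{2+\alpha}.
\]
Combining this with the Evans--Krylov decay of $v$ through the triangle inequality gives, for any $\alpha<\bar\alpha$ and for $r=\sigma\rho$ with $\sigma$ small,
\[
\inf_{P\in\mathcal{P}_2}\|u-P\|_{L^\infty(B_{\sigma\rho}(x_0))}\le \theta\,\sigma^{2+\alpha}\inf_{P\in\mathcal{P}_2}\|u-P\|_{L^\infty(B_\rho(x_0))}+C_1\rho^{2+\alpha},
\]
with contraction factor $\theta<1$ obtained by choosing $\sigma$ small enough that $C_0\sigma^{2+\bar\alpha}<\sigma^{2+\alpha}$. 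Iterating this dichotomy at dyadic scales $\rho_k=\sigma^k\rho_0$ and summing a geometric series produces a quadratic polynomial $P_{x_0}$ with $\|u-P_{x_0}\|_{L^\infty(B_r(x_0))}=O(r^{2+\alpha})$ uniform in $x_0$, which by Campanato's criterion gives the desired bound.

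The main obstacle is the circular dependence in the perturbation estimate: the bound on $\|u-v\|_{L^\infty}$ formally requires control of $\|D^2 v\|_{L^\infty}$, which through Evans--Krylov depends on $\|v\|_{L^\infty}\le\|u\|_{L^\infty}$, but the iteration itself also needs \emph{a priori} second-derivative control on $u$ to start. To close the argument one first runs an initial $W^{2,p}$ estimate (via the Calder\'on--Zygmund theory of Caffarelli for viscosity solutions) or equivalently a $C^{1,1}$ estimate, and only then launches the iteration. A secondary, purely bookkeeping issue is calibrating $\alpha<\bar\alpha$ and $\sigma$ so that the Evans--Krylov contraction genuinely beats the $x$-perturbation $K_2|x-y|^\alpha(\|A\|+1)$; the $+1$ on the right-hand side of the conclusion is exactly what absorbs the inhomogeneous piece of this perturbation.
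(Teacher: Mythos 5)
The paper does not prove this theorem at all: it is quoted as a known result, with the justification ``follows from Theorem 3 in \cite{Caf89} (see also Theorem 8.1 in \cite{CC})''. Your sketch reconstructs precisely the strategy of that cited proof -- freeze the coefficients at $x_0$, use Evans--Krylov $C^{2,\bar\alpha}$ interior regularity for the concave frozen equation, compare $u$ with the frozen solution $v$ via the Alexandrov--Bakelman--Pucci estimate (the difference lying in the Pucci class by \eqref{elliptic condition}), iterate at geometric scales, and convert the pointwise polynomial approximation into $\|u\|_{C^{2,\alpha}}$ by the Campanato characterization. So in outline you are doing the same thing as the source the paper appeals to, and the skeleton is sound.

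There is, however, a genuine gap at the step you yourself flag, and the repair you propose is not the right one. The ABP comparison $\|u-v\|_{L^\infty(B_\rho(x_0))}\leq C\rho^{2+\alpha}$ needs the perturbation term $K_2\rho^{\alpha}\bigl(\|D^2v(x)\|+1\bigr)$ to be bounded \emph{on all of} $B_\rho(x_0)$, whereas Evans--Krylov only controls $D^2v$ in the interior; since $v$ takes the merely continuous boundary datum $u$, $\|D^2v\|$ may blow up like $\mathrm{dist}(x,\partial B_\rho)^{-2}$, and this blow-up is not even in $L^n$, so ABP cannot be applied naively on the full ball. Your proposed fix -- a preliminary $C^{1,1}$ or $W^{2,p}$ estimate for $u$ -- does not resolve this: an interior $C^{1,1}$ bound for the $x$-dependent equation with constants depending only on $n,\lambda,\Lambda,K_2,\|u\|_{C^0},\|f\|_{C^\alpha}$ is essentially the conclusion being proved (the qualitative hypothesis $u\in C^2$ may not be used quantitatively), and a $W^{2,p}$ bound on $u$ in any case does not control $D^2v$ near $\partial B_\rho(x_0)$, which is where the problem sits. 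In Caffarelli's argument the difficulty is handled differently: the comparison is performed on a slightly smaller concentric ball, the boundary layer being controlled by global (up-to-the-boundary) H\"older estimates for $u$ and $v$ with the same boundary values, and the coefficient oscillation is measured in an $L^n$-averaged sense on normalized, rescaled iterates; no a priori second-derivative information on $u$ enters. Two minor points: since $u$ is a classical solution, your choice of level $G(D^2u(x_0),x_0)$ for the frozen problem is harmless because it equals $f(x_0)$; and the iteration must be run on recentered and rescaled functions (subtracting the current quadratic approximation) so that the smallness of the $x$-oscillation and of $f-f(x_0)$ is preserved scale by scale -- as written, your dichotomy inequality hides this renormalization.
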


The above result follows from Theorem 3 in \cite{Caf89} (see also
Theorem 8.1 in \cite{CC}). It generalizes, by a perturbation method,
a priori estimates for the solutions of linear elliptic second-order
equations to the viscosity solutions, due to Evans and Krylov, about
equations with concave dependence on the hessian. In contrast with
Evans-Krylov result (see e.g. inequality (17.42) in \cite{GT}),
Theorem \ref{krylov-safonov} gives an estimate in terms of the
$C^\alpha$-norm of $f$ rather than the $C^2$-norm, and this is
essential for our purposes.

\section{Short time existence and evolution equations}

This section first consider short time existence for the initial
value problem \eqref{vpmthpcf}-\eqref{def barF}.
\begin{theorem}
Let $X_{0}:M^{n}\rightarrow {\mathbb{H}}_{\kappa}^{n+1}$ be a smooth
closed hypersurface with mean curvature strictly bounded from below
by $na$ everywhere. Then there exists a unique smooth solution
$X_{t}$ of problem \eqref{vpmthpcf}-\eqref{def barF}, defined on
some time interval $[0, T )$, with $T > 0$.
\end{theorem}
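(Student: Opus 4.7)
My plan is to translate \eqref{vpmthpcf}--\eqref{def barF} into a scalar, fully nonlinear parabolic equation for the signed normal distance over $M_0$, verify uniform parabolicity by means of the hypothesis together with Lemma~\ref{property Hm}, and then invoke the standard short-time existence theory for parabolic equations, disposing of the nonlocal term $\bar F(t)$ via a Banach fixed-point argument. The only genuinely nonstandard obstacle is the presence of this global average, which renders the equation nonlocal; everything else is a routine application of results recalled in Section~2.

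First, using the tubular neighbourhood of $M_0$ in $\mathbb{H}_\kappa^{n+1}$, I parametrize a nearby family of hypersurfaces by $X_u(p,t)=\exp_{X_0(p)}\!\bigl(u(p,t)\,\nu_0(p)\bigr)$, with $\nu_0$ the outward unit normal of $M_0$. The induced metric $g[u]$ and the Weingarten map $\mathscr{W}[u]$ depend smoothly on $u,\nabla u,\nabla^2 u$, the ambient curvature $\kappa$ entering through the Jacobi field governing the geodesic variation. Equation \eqref{vpmthpcf} then becomes
\[
\partial_t u \;=\; \frac{\bar F[u](t)-F\bigl(\lambda(\mathscr{W}[u])\bigr)}{\langle \nu_u,\nu_0\rangle},
\]
in which $F\circ\lambda\circ\mathscr{W}[\,\cdot\,]$ is a fully nonlinear operator of second order in $u$, while $\bar F[u](t)$ is a zeroth-order but nonlocal term.

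Second, I verify uniform parabolicity at $u\equiv 0$, namely the uniform positivity of $\dot F^{ij}$. The hypothesis $H>na$, combined with the smoothness and compactness of $M_0$, confines its principal curvatures to a compact subset of that connected component of $\Gamma_+$ on which $H_m>0$. Lemma~\ref{property Hm}(ii) then yields $\partial H_m/\partial\lambda_i>0$ there, so that $\dot F^{ij}=\beta H_m^{\beta-1}\,\partial H_m/\partial h_{ij}$ is uniformly positive definite on $M_0$, which is precisely uniform parabolicity of the linearised operator.

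Finally, to dispose of the global term I would set up a Banach fixed-point iteration: for each $\phi\in C([0,\tau];\mathbb{R})$, solve the genuinely local problem
\[
\partial_t u=\bigl(\phi(t)-F(\lambda(\mathscr{W}[u]))\bigr)/\langle\nu_u,\nu_0\rangle,\qquad u(\cdot,0)\equiv 0
\]
by the classical theory of fully nonlinear parabolic equations (parabolicity having just been established, this produces a unique smooth $u_\phi$ on a short common time interval, with continuous dependence on $\phi$ in a suitable Hölder norm). The assignment $\phi\mapsto \bar F[u_\phi]$ is Lipschitz on a small ball of continuous functions once $\tau$ is sufficiently small, and Banach's theorem furnishes a fixed point, which is the desired local solution. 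Uniqueness propagates from the contraction property, and smoothness is obtained by differentiating the equation and iterating the parabolic Schauder-type estimates of Section~2.
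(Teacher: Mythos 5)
Your overall strategy --- rewrite the flow as a scalar fully nonlinear parabolic equation for a graph function over $M_0$, check parabolicity, and absorb the nonlocal average $\bar F(t)$ by a contraction-mapping argument --- is exactly the route the paper takes: its ``proof'' is a one-line citation of Theorem~3.1 of Cabezas-Rivas--Sinestrari \cite{CS10}, which proceeds precisely as you describe. So in outline you have reconstructed the intended argument, and the fixed-point treatment of $\bar F$ and the bootstrap to smoothness are fine.

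There is, however, one step that does not hold as written. You claim that the hypothesis $H>na$, together with compactness, ``confines the principal curvatures to a compact subset of $\Gamma_+$.'' This is false: $H>na$ only bounds the \emph{sum} $\sum_i\lambda_i$ from below, and a hypersurface can satisfy $H>na$ while having $\lambda_1<0$ at some points (e.g.\ $\lambda=(-1,10)$ for $n=2$ and $a$ small). For $m\ge 2$ the operator $F=H_m^{\beta}$ need not even be defined, let alone elliptic, outside the relevant G\aa rding cone, so Lemma~\ref{property Hm}(ii) cannot be invoked from $H>na$ alone; your parabolicity verification is complete only in the case $m=1$, where $\dot F^{ij}=\tfrac{\beta}{n}H_1^{\beta-1}g^{ij}>0$ indeed follows from $H>na>0$. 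To be fair, the theorem's stated hypothesis is itself too weak for $m\ge2$ (an imprecision inherited from the citation: in \cite{CS10} the initial hypersurface is assumed convex); in the only place the theorem is used, the initial datum satisfies the pinching condition \eqref{ini pinching}, which forces $\tilde\lambda_i>0$, hence $\lambda_i>a>0$, and then your compactness argument and Lemma~\ref{property Hm}(ii) do apply. You should either restrict to $m=1$, or strengthen the hypothesis to (h-)convexity --- equivalently $\lambda(\mathscr W_0)\subset\Gamma_+$ --- before asserting uniform parabolicity.
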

\begin{proof}
We can argue exactly as in \cite[Theorem 3.1]{CS10}, although the
assumptions on the initial hypersurface and the ambient background
space in that paper are different, the proof applies to our case as
well.
\end{proof}

Proceeding now exactly as in \cite{Hui84} we derive some evolution
equations  on $M_{t}$ from the basic equation
\eqref{vpmthpcf}-\eqref{def barF}.
\begin{proposition}
For the ambient space $N^{n+1}={\mathbb{H}}_{\kappa}^{n+1}$, on any
solution $M_{t}$ of \eqref{vpmthpcf}-\eqref{def barF} the following
hold: \allowdisplaybreaks
\begin{align}
\label{evmetric}
&\partial_{t}g=2(\bar{F}-F)A,\\
\label{evimetric}
&\partial_{t}g^{-1}=-2(\bar{F}-F)g^{-1}\mathscr{W},\\
\label{evnormal}
&\partial_{t}\nu=\mathrm{X_{*}}(\nabla F),\\
\label{evvolume}
&\partial_{t}(\dif \mu_{t})=(\bar{F}-F)H\dif \mu_{t},\\
\label{evsecf} &\partial_{t}A=\Delta_{\dot{F}} A
  +  \ddot{F}(\nabla\mathscr{W},\nabla\mathscr{W})
  +\big[ \tr_{\dot F}(A\mathscr{W} )+a^{2}\tr(\dot F)\big] \, A \\
 &\qquad\quad + \big[\bar{F} - (m \beta + 1) F\big] A \mathscr{W}
 + a^{2}\big[\bar{F} - (m \beta + 1) F\big]g,\notag\\
 \label{evweigar}
&\partial_{t}\mathscr{W}=\Delta_{\dot{F}} \mathscr{W}
  +  \ddot{F}(\nabla\mathscr{W},\nabla\mathscr{W})
  +\big[ \tr_{\dot F}(A\mathscr{W} )+a^{2}\tr(\dot F)\big] \, \mathscr{W} \\
 &\qquad\quad - \big[\bar{F}+ (m \beta - 1) F\big] \mathscr{W}^{2}
 + a^{2}\big[\bar{F} - (m \beta + 1) F\big]\Id.\notag
\end{align}
\end{proposition}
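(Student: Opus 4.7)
My plan is to follow the classical derivation strategy of Huisken \cite{Hui84}, which reduces every evolution equation to a differentiation of a pointwise geometric identity along the flow, with two modifications dictated by the hyperbolic ambient space: every commutation of covariant derivatives produces a background curvature contribution through the ambient Riemann tensor $\bar{\R}_{\alpha\beta\gamma\delta}=-a^{2}(\bar{g}_{\alpha\gamma}\bar{g}_{\beta\delta}-\bar{g}_{\alpha\delta}\bar{g}_{\beta\gamma})$, and the Gauss equation on $M_t$ contains an extra $-a^{2}(g_{ik}g_{jl}-g_{il}g_{jk})$ term relative to Euclidean space. These two inputs are the only source of the $a^{2}$ contributions visible in \eqref{evsecf} and \eqref{evweigar}.

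First I would dispatch the four ``kinematic'' identities. Writing $\phi=\bar{F}-F$, from $\partial_t X=\phi\nu$ and $[\partial_t,\partial_i]=0$ one gets $\partial_t g_{ij}=\langle\bar{\nabla}_i(\phi\nu),\partial_j X\rangle+\langle\partial_i X,\bar{\nabla}_j(\phi\nu)\rangle$, and the Weingarten relation $\langle\bar{\nabla}_i\nu,\partial_j X\rangle=h_{ij}$ yields \eqref{evmetric}; \eqref{evimetric} follows from $\partial_t g^{-1}=-g^{-1}(\partial_t g)g^{-1}$. For \eqref{evnormal}, differentiate $\langle\nu,\nu\rangle=1$ and $\langle\nu,\partial_j X\rangle=0$ to see that $\partial_t\nu$ is tangential with components $g^{jk}\partial_j F\,\partial_k X$, since $\bar{F}$ is space-independent. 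Finally, $d\mu_t=\sqrt{\det g}\,dx$ combined with $\partial_t\sqrt{\det g}=\tfrac{1}{2}g^{ij}\partial_t g_{ij}\sqrt{\det g}=\phi H\sqrt{\det g}$ gives \eqref{evvolume}.

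The substantive step is \eqref{evsecf}. Starting from $h_{ij}=-\langle\bar{\nabla}_i\partial_j X,\nu\rangle$, I differentiate in $t$, commute $\partial_t$ and $\bar{\nabla}_i$, and substitute $\partial_t X=\phi\nu$ and $\partial_t\nu=X_\ast(\nabla F)$. This produces the standard expression $\partial_t h_{ij}=\nabla_i\nabla_j\phi-\phi h_i^{\,k}h_{kj}+\phi\,\bar{\R}_{\alpha\beta\gamma\delta}\nu^\alpha\partial_i X^\beta\nu^\gamma\partial_j X^\delta$; the last term contributes $-a^{2}\phi g_{ij}$ by the form of $\bar{\R}$ in ${\mathbb{H}}_\kappa^{n+1}$. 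To rewrite $\nabla_i\nabla_j\phi=-\nabla_i\nabla_j F$ in terms of $\Delta_{\dot F}A$, I apply the chain rule $\nabla_i F=\dot F_k^{\,l}\nabla_i h^k_{\,l}$, differentiate once more, and invoke the Simons-type identity for $\nabla_k\nabla_l h_{ij}$. The commutation of the two covariant derivatives against $\nabla_i\nabla_j h_{kl}$ produces the Riemann-tensor terms $h_{im}R^m_{\ jkl}+h_{jm}R^m_{\ ikl}$ on $M_t$, and here the Gauss equation $R_{ijkl}=h_{ik}h_{jl}-h_{il}h_{jk}-a^{2}(g_{ik}g_{jl}-g_{il}g_{jk})$ injects exactly the coefficient $a^{2}\tr(\dot F)\,h_{ij}$ into the reaction term, together with the intrinsic $\tr_{\dot F}(A\mathscr{W})h_{ij}$ term. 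After grouping the pieces proportional to $\bar F$ (from $\phi$) and to $F$ (from $-\nabla\nabla F$ plus homogeneity $\dot F\mathscr{W}=m\beta F$), one arrives at \eqref{evsecf}. Equation \eqref{evweigar} then follows immediately from $\mathscr{W}=g^{-1}A$ via $\partial_t\mathscr{W}=(\partial_t g^{-1})A+g^{-1}\partial_t A$, using \eqref{evimetric} and \eqref{evsecf}; the term $-2\phi g^{-1}\mathscr{W}\cdot A$ combines with $\phi A\mathscr{W}$ after raising indices to yield the coefficient $-[\bar F+(m\beta-1)F]\mathscr{W}^2$, while the metric remnant becomes $a^{2}[\bar F-(m\beta+1)F]\mathrm{Id}$.

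The main obstacle is the bookkeeping in the Simons-type step: one must carefully split each curvature commutator into the $h\cdot h$ part (which cancels or combines with $-\phi h_i^k h_{kj}$ after applying $\dot F$-homogeneity) and the $-a^{2}g\cdot g$ part (which generates precisely the two new $a^{2}$ terms), while simultaneously handling the second-derivative term $\ddot F(\nabla\mathscr{W},\nabla\mathscr{W})$ produced when differentiating $\dot F^{kl}\nabla h_{kl}$. Once this algebraic reorganization is done in the same spirit as \cite[Sec.~3]{CS10} but with the Gauss equation for $\kappa\neq 0$, all six identities are obtained uniformly, and no Euclidean-specific cancellation is used.
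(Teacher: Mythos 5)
Your overall strategy is exactly the paper's: derive the kinematic identities as in Huisken, differentiate $h_{ij}=-\langle\bar{\nabla}_i\partial_j X,\nu\rangle$ in time, convert $\Hess_\nabla F$ via the chain rule into $\dot F^{kl}\nabla_i\nabla_j h_{kl}+\ddot F(\nabla\mathscr{W},\nabla\mathscr{W})$, apply a Simons-type identity, and then specialize the curvature terms using $\bar{\R}_{\alpha\beta\gamma\delta}=-a^2(\bar g_{\alpha\gamma}\bar g_{\beta\delta}-\bar g_{\alpha\delta}\bar g_{\beta\gamma})$ together with the homogeneity relation $\dot F\mathscr{W}=m\beta F$; the passage to \eqref{evweigar} via $\partial_t\mathscr{W}=(\partial_t g^{-1})A+g^{-1}\partial_t A$ is also identical, and your bookkeeping there (the $-2(\bar F-F)\mathscr{W}^2$ correction combining to $-[\bar F+(m\beta-1)F]\mathscr{W}^2$) is correct. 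Whether one routes the commutator terms through the intrinsic $R_{ijkl}$ plus the Gauss equation, as you propose, or through the ambient-curvature form of Simons' identity, as the paper does, is a cosmetic difference.

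There is, however, a concrete sign error in the one formula you actually write down for the substantive step. With the conventions you state ($h_{ij}=-\langle\bar\nabla_i\partial_j X,\nu\rangle$ and $\langle\bar\nabla_i\nu,\partial_j X\rangle=h_{ij}$, so $h_{ij}>0$ on convex hypersurfaces with outward $\nu$), the correct identity for $\partial_t X=\phi\nu$ is
\begin{equation*}
\partial_t h_{ij}=-\nabla_i\nabla_j\phi+\phi\,h_i^{\,k}h_{kj}-\phi\,\bar{\R}_{0i0j},
\end{equation*}
i.e.\ with $\phi=\bar F-F$ precisely the paper's $\partial_t h_{ij}=\Hess_\nabla F(\partial_i,\partial_j)+(\bar F-F)h_{ik}h_j^k-(\bar F-F)\bar{\R}_{i0j0}$. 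You wrote the negative of all three terms. This is not harmless: one can test it on a geodesic sphere moving radially at unit speed, where $h_{ij}=\s_\kappa(r)\cc_\kappa(r)\sigma_{ij}$ gives $\partial_t h_{ij}=(\cc_\kappa^2+a^2\s_\kappa^2)\sigma_{ij}=h_{ik}h^k_j-\bar{\R}_{0i0j}$, matching the formula above and not yours. More to the point, since $\bar{\R}_{0i0j}=-a^2g_{ij}$, the curvature term must contribute $+a^2(\bar F-F)g_{ij}$; this is the \emph{only} source of the $+a^2\bar F\,g$ piece of $a^2[\bar F-(m\beta+1)F]g$ in \eqref{evsecf} (neither $\Hess_\nabla F$ nor the Simons identity involves $\bar F$), so your claimed contribution $-a^2\phi g_{ij}$ would leave you with $-a^2\bar F\,g$ and, after substituting $\nabla_i\nabla_j\phi=-\nabla_i\nabla_jF$, with $-\Delta_{\dot F}A$ as the leading term. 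Once this sign is corrected the rest of your plan goes through and reproduces the paper's proof.
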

\begin{proof}
The first fourth evolution equations under
\eqref{vpmthpcf}-\eqref{def barF} follow from straightforward
computation as in $ \S 3$ of \cite{Hui84}, and valid in an arbitrary
Riemannian manifold.

The evoltution of $A$ can be calculated from the definition of $A$
\allowdisplaybreaks
\begin{align*}
\partial_{t}h_{ij}=& -\frac{\partial}{\partial
t}\langle\bar{\nabla}_{\mathrm{X_{*}}(\partial_{i})}\mathrm{X_{*}}(\partial_{j}),
\nu\rangle.\\
=&-\langle\bar{\nabla}_{\mathrm{X_{*}}(\partial_{t})}\bar{\nabla}_{\mathrm{X_{*}}(\partial_{i})}\mathrm{X_{*}}(\partial_{j})
, \nu\rangle
-\langle\bar{\nabla}_{\mathrm{X_{*}}(\partial_{i})}\mathrm{X_{*}}(\partial_{j}),
\frac{\partial \nu}{\partial t}\rangle.\\
=&-\langle\bar{\nabla}_{\mathrm{X_{*}}(\partial_{i})}\bar{\nabla}_{\mathrm{X_{*}}(\partial_{t})}\mathrm{X_{*}}(\partial_{j})
, \nu\rangle -\bar{R}\left(\mathrm{X_{*}}(\partial_{i}),
\mathrm{X_{*}}(\partial_{t}), \mathrm{X_{*}}(\partial_{j}),
\nu\right)\\
&-\langle\bar{\nabla}_{\mathrm{X_{*}}(\partial_{i})}\mathrm{X_{*}}(\partial_{j}),
\mathrm{X_{*}}(\nabla F)\rangle\\
=&-\langle\bar{\nabla}_{\mathrm{X_{*}}(\partial_{i})}\bar{\nabla}_{\mathrm{X_{*}}(\partial_{j})}((\bar{F}-F)\nu)
, \nu\rangle
+(F-\bar{F})\bar{R}_{i0j0}-\nabla_{\nabla_{\partial_{i}}\partial_{j}}
F\\
=&\,\partial_{i}\partial_{j}F-\nabla_{\partial_{i}}\partial_{j}F -
(F-\bar{F})\langle\bar{\nabla}_{\mathrm{X_{*}}(\partial_{i})}\mathrm{X_{*}}(\partial_{k}),
\nu\rangle h_{j}^{k}-(\bar{F}-F)\bar{R}_{i0j0} \\
=&\, \Hess_{\nabla}F (\partial_{i},\partial_{j})+
(\bar{F}-F)h_{ik}h_{j}^{k}-(\bar{F}-F)\bar{R}_{i0j0},
\end{align*}
where $\nu$ is arranged to be $e_{0}$. Note that the definition of
$\dot{F}$ and $\ddot{F}$ allow us to write $\Hess_{\nabla}F$ as
follows
\begin{align*}
\Hess_{\nabla}F (\partial_{i},\partial_{j})&=\nabla_{i}\nabla_{j}F
=\nabla_{i}(\dot{F}_{k}^{l}\nabla_{j}h_{l}^{k})\\
&=\dot{F}_{k}^{l}\nabla_{i}\nabla_{j}h_{l}^{k} +
\ddot{F}_{k}^{l}\,_{m}^{n} \nabla_{i}h^{m}_{n}\nabla_{j}h_{l}^{k}\\
&=\dot{F}^{kl}\nabla_{i}\nabla_{j}h_{kl} +
\ddot{F}_{k}^{l}\,_{m}^{n} \nabla_{i}h^{m}_{n}\nabla_{j}h_{l}^{k}.
\end{align*}
Recall a form of Simons' identity \cite{Sim} (a simple  derivation
can be found in \cite{SSY75}), which is a consequence of the
Gau{\ss} and Codazzi equations
\begin{align*}
\nabla_{i}\nabla_{j}h_{kl}=&\nabla_{k}\nabla_{l}h_{ij}+h_{ij}h_{kp}h^{p}_{l}
-h_{ip}h^{p}_{l}h_{kj}+h_{il}h_{kp}h^{p}_{j}-h_{ip}h^{p}_{j}h_{kl}\\
&+\bar{R}_{ikjp}h^{p}_{l}+ \bar{R}_{iklp}h^{p}_{j}
-\bar{R}_{plkj}h^{p}_{i}+\bar{R}_{pjli}h^{p}_{k}+\bar{R}_{0k0l}h_{ij}
-\bar{R}_{0i0j}h_{kl}\\
&+\bar{\nabla}_{i}\bar{R}_{0lkj}+\bar{\nabla}_{k}\bar{R}_{0jli}.
\end{align*}
 Therefore
\begin{align}\label{evsecf1}
\partial_{t}h_{ij}=&\,\dot{F}^{kl}\nabla_{k}\nabla_{l}h_{ij} +
\ddot{F}_{k}^{l}\,_{m}^{n} \nabla_{i}h^{m}_{n}\nabla_{j}h_{l}^{k}\\
&+\dot{F}^{kl}\big\{h_{ij}h_{kp}h^{p}_{l}
-h_{ip}h^{p}_{l}h_{kj}+h_{il}h_{kp}h^{p}_{j}-h_{ip}h^{p}_{j}h_{kl}\notag\\
&+\bar{R}_{ikjp}h^{p}_{l}+ \bar{R}_{iklp}h^{p}_{j}
-\bar{R}_{plkj}h^{p}_{i}+\bar{R}_{pjli}h^{p}_{k}+\bar{R}_{0k0l}h_{ij}
-\bar{R}_{0i0j}h_{kl}\notag\\
&+\bar{\nabla}_{i}\bar{R}_{0lkj}+\bar{\nabla}_{k}\bar{R}_{0jli}\big\}
+ (\bar{F}-F)h_{ik}h_{j}^{k}-(\bar{F}-F)\bar{R}_{i0j0}.\notag
\end{align}
Also note that in our case where the background space is a
hyperbolic space, the ambient space is locally symmetric
($\bar{\nabla}\bar{\R}=0$) and the Riemann curvature tensor takes
the form
\begin{equation}\label{hyperbolic curvature}
\bar{\R}_{\alpha\beta\gamma\delta}=-a^2\left(\bar{g}_{\alpha\gamma}\bar{g}_{\beta\delta}-\bar{g}_{\alpha\delta}\bar{g}_{\beta\gamma}\right).
\end{equation}
Since $F$ is a homogeneous function of the Weingarten map
$\mathscr{W}$ of degree $m\beta$, then \allowdisplaybreaks
\begin{equation}\label{F'D}
\dot{F}\mathscr{W}=m \beta F.
\end{equation}
Then, the relations \eqref{hyperbolic curvature} with
$\bar{\nabla}\bar{\R}=0$ and \eqref{F'D} apply to \eqref{evsecf1} to
give: \allowdisplaybreaks
\begin{align*}
\partial_{t}h_{ij}=&\,\dot{F}^{kl}\nabla_{k}\nabla_{l}h_{ij} +
\ddot{F}_{k}^{l}\,_{m}^{n}
\nabla_{i}h^{m}_{n}\nabla_{j}h_{l}^{k}+\dot{F}^{l}_{k}h^{k}_{p}h_{l}^{p}h_{ij}+a^{2}\dot{F}^{k}_{k}h_{ij}\\
&+\big[\bar{F} - (m \beta + 1) F\big]h_{ik}h_{j}^{k} +
a^{2}\big[\bar{F} - (m \beta + 1) F\big]g_{ij}.
\end{align*}
Hence in compact notation we have \eqref{evsecf}.

Finally recalling that $\mathscr{W}=g^{-1}A$ we have
\begin{align*}
\partial_{t}\mathscr{W}=&\,g^{-1}\partial_{t}A +
\partial_{t}g^{-1}A\\
=&\,\Delta_{\dot{F}} \mathscr{W}
  +  \ddot{F}(\nabla\mathscr{W},\nabla\mathscr{W})
  +\big[ \tr_{\dot F}(A\mathscr{W} )+a^{2}\tr(\dot F)\big] \, \mathscr{W} \\
 & + \big[\bar{F}- (m \beta +1) F\big] \mathscr{W}^{2}
 + a^{2}\big[\bar{F} - (m \beta + 1) F\big]Id-2(\bar{F}-F)\mathscr{W}^{2}\\
=&\,\Delta_{\dot{F}} \mathscr{W}
  +  \ddot{F}(\nabla\mathscr{W},\nabla\mathscr{W})
  +\big[ \tr_{\dot F}(A\mathscr{W} )+a^{2}\tr(\dot F)\big] \, \mathscr{W} \\
 &- \big[\bar{F}+ (m \beta - 1) F\big] \mathscr{W}^{2}
 + a^{2}\big[\bar{F} - (m \beta + 1) F\big]\Id,
\end{align*}
which is \eqref{evweigar}.
\end{proof}

In the next theorem, we derive the evolution of any homogeneous
function of the Weingarten map $\mathscr{W}$ defined on an evolving
hypersurface $M_{t}$ of ${\mathbb{H}}_{\kappa}^{n+1}$ under the flow
\eqref{vpmthpcf}-\eqref{def barF}.
\begin{theorem}\label{evGD}
If $G$ is a homogeneous function of the Weingarten map $\mathscr{W}$
of degree $\alpha$, then the evolution equation of $G$ under the
flow \eqref{vpmthpcf}-\eqref{def barF} in
${\mathbb{H}}_{\kappa}^{n+1}$ is the following
\begin{align*}
\partial_{t}G=&\,\Delta_{\dot{F}} G
-\dot{F}\ddot{G}(\nabla\mathscr{W},\nabla\mathscr{W})
  + \dot{G} \ddot{F}(\nabla\mathscr{W},\nabla\mathscr{W})
  +\alpha\big[ \tr_{\dot F}(A\mathscr{W} )+a^{2}\tr(\dot F)\big] \, G\\
 &- \big[\bar{F}+ (m \beta - 1) F\big] \dot{G}\mathscr{W}^{2}
 + a^{2}\big[\bar{F} - (m \beta + 1) F\big]\tr(\dot G).
\end{align*}
\end{theorem}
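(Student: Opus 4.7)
The plan is straightforward: apply the chain rule to $G$ viewed as a function of the Weingarten map $\mathscr{W}$, substitute the evolution equation \eqref{evweigar}, and use Euler's identity for homogeneous functions together with a commutation of $\dot G$ with the operator $\Delta_{\dot F}$.

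First I would write
\[
\partial_t G = \dot{G}_i^{\,j}\,(\partial_t \mathscr{W})_j^{\,i},
\]
and plug in \eqref{evweigar} term by term. The only term that is not immediate is the one coming from $\dot G$ applied to $\Delta_{\dot F}\mathscr{W}$. For that I would establish the commutation identity
\[
\Delta_{\dot F} G \;=\; \dot F^{kl}\nabla_k\nabla_l G
\;=\; \dot F^{kl}\bigl(\dot G\, \nabla_k\nabla_l \mathscr{W}\bigr)
 + \dot F^{kl}\,\ddot G(\nabla_k \mathscr{W},\nabla_l\mathscr{W}),
\]
which comes from two applications of the chain rule, noting that $\dot G$ commutes with scalar derivatives of $\mathscr{W}$. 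Rearranged, this yields
\[
\dot G\,(\Delta_{\dot F}\mathscr{W}) \;=\; \Delta_{\dot F} G - \dot F\,\ddot G(\nabla\mathscr{W},\nabla\mathscr{W}),
\]
which accounts for the first two terms on the right-hand side of the claimed formula. The $\ddot F$ term of \eqref{evweigar} gets contracted with $\dot G$ to produce $\dot G\,\ddot F(\nabla\mathscr{W},\nabla\mathscr{W})$ directly.

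Next I would handle the algebraic (non-derivative) terms of \eqref{evweigar}. For the coefficient of $\mathscr{W}$, namely $\tr_{\dot F}(A\mathscr{W}) + a^2\tr(\dot F)$, I use Euler's relation for the $\alpha$-homogeneous function $G$, i.e.\ $\dot G\,\mathscr{W} = \alpha\,G$, to get
\[
\bigl[\tr_{\dot F}(A\mathscr{W}) + a^2 \tr(\dot F)\bigr]\,\dot G\,\mathscr{W}
\;=\; \alpha\bigl[\tr_{\dot F}(A\mathscr{W})+a^2\tr(\dot F)\bigr]\,G,
\]
matching the third term. For the $\mathscr{W}^2$ coefficient the contraction $\dot G\,\mathscr{W}^2$ is left as is (it appears explicitly in the statement), and for the identity term $a^2[\bar F-(m\beta+1)F]\Id$ the contraction $\dot G\,\Id = \tr(\dot G)$ produces the last summand.

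The only mild subtlety is the derivative-commutation step; everything else is bookkeeping. I expect the main technical point to verify carefully is that $\ddot G$ and $\ddot F$ really are being contracted with $\nabla\mathscr{W}$ in the sense defined in Section 2, and that the second-derivative expansion of $\Delta_{\dot F} G$ does not produce any extra term from differentiating $\dot F^{kl}$, which is legitimate because $\Delta_{\dot F}$ is defined by $\dot F^{kl}\nabla_k\nabla_l$ (no derivative falls on the coefficient). Once this is settled, assembling the six pieces gives exactly the claimed evolution equation.
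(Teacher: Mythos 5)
Your proposal is correct and follows essentially the same route as the paper: chain rule $\partial_t G=\dot G\,\partial_t\mathscr{W}$, substitution of \eqref{evweigar}, the commutation identity $\dot G\,\Delta_{\dot F}\mathscr{W}=\Delta_{\dot F}G-\dot F\,\ddot G(\nabla\mathscr{W},\nabla\mathscr{W})$, and Euler's relation $\dot G\,\mathscr{W}=\alpha G$ (which you state correctly; the paper's proof contains a typo here). No gaps.
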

\begin{proof}
The definition of $\dot{G}$ and $\ddot{G}$ allow us to write
$\Hess_{\nabla}G$ as follows
\begin{align*}
\Hess_{\nabla}G =\dot{G}\,\Hess_{\nabla}\mathscr{W} +
\ddot{G}(\nabla\mathscr{W} ,\nabla\mathscr{W}),
\end{align*}
which gives
\begin{align*}
\Delta_{\dot{F}} G =\dot{F}g^{-1}\,\Hess_{\nabla} G=
\dot{G}\,\Delta_{\dot{F}} \mathscr{W}
  +  \dot{F}\ddot{G}(\nabla \mathscr{W} ,\nabla
  \mathscr{W}).
\end{align*}
Therefore, by \eqref{evweigar}
\begin{align*}
\partial_{t}G= &\,\dot{G} \partial_{t}\mathscr{W}\\
=&\,\dot{G}\Delta_{\dot{F}} \mathscr{W}
  +  \dot{G}\ddot{F}(\nabla\mathscr{W},\nabla\mathscr{W})
  +\big[ \tr_{\dot F}(A\mathscr{W} )+a^{2}\tr(\dot F)\big] \, \dot{G}\mathscr{W} \\
 &- \big[\bar{F}+ (m \beta - 1) F\big]\dot{G} \mathscr{W}^{2}
 + a^{2}\big[\bar{F} - (m \beta + 1) F\big]\tr(\dot G)\\
=&\,\Delta_{\dot{F}} G
-\dot{F}\ddot{G}(\nabla\mathscr{W},\nabla\mathscr{W})
  + \dot{G} \ddot{F}(\nabla\mathscr{W},\nabla\mathscr{W})
  +\alpha\big[ \tr_{\dot F}(A\mathscr{W} )+a^{2}\tr(\dot F)\big] \, G\\
 &- \big[\bar{F}+ (m \beta - 1) F\big] \dot{G}\mathscr{W}^{2}
 + a^{2}\big[\bar{F} - (m \beta + 1) F\big]\tr(\dot G),
\end{align*}
where Euler's theorem $\dot{G}\mathscr{W}= \alpha \mathscr{W}$ is
used in the last line.
\end{proof}

An immediate application of the theorem above is to obtain the
evolving equations for $H$, and $F$.
\begin{proposition}
For the ambient space $N^{n+1}={\mathbb{H}}_{\kappa}^{n+1}$, on any
solution $M_{t}$ of \eqref{vpmthpcf}-\eqref{def barF} the following
hold:
\begin{align}\label{evH}
&\partial_{t}H = \Delta_{\dot{F}} H + \tr \big[\ddot{F}(\nabla
\mathscr{W}, \nabla\mathscr{W})\big]
- \big(\bar{F} + (m \beta -1) F\big) |A|^2\\
 &\qquad\quad + \big[\tr_{\dot
F} (A \mathscr{W})+ a^{2}\tr(\dot F)\big]\,H
 +n a^{2}\big[\bar{F} - (m \beta + 1) F\big]\notag,\\
  \label{evF}
&\partial_{t}F = \Delta_{\dot F} F + (F - \bar{F}) \,\big[\tr_{\dot
F} (A \mathscr{W}) - a^{2}\tr(\dot F)\big].
\end{align}
\end{proposition}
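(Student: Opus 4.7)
The plan is to derive both equations as direct specializations of Theorem~\ref{evGD}, applied respectively to $G=H$ and $G=F$, followed by purely algebraic bookkeeping. No new geometric input beyond what has already been done for the evolution of $\mathscr{W}$ is required, so the entire argument is mechanical once the right substitutions are made.

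For \eqref{evH}, I would take $G=H=\tr_{g}\mathscr{W}$, which is homogeneous of degree $\alpha=1$ in $\mathscr{W}$. The derivatives are immediate: $\dot{H}_{i}^{j}=\delta_{i}^{j}$, hence $\tr(\dot{H})=n$, $\ddot{H}=0$, and $\dot{H}\mathscr{W}^{2}=\tr(\mathscr{W}^{2})=|A|^{2}$. Substituting into the formula of Theorem~\ref{evGD}, the $\ddot{G}$–term vanishes, the term $\dot{G}\,\ddot{F}(\nabla\mathscr{W},\nabla\mathscr{W})$ becomes $\tr\bigl[\ddot{F}(\nabla\mathscr{W},\nabla\mathscr{W})\bigr]$, the cubic-in-curvature term contributes $\bigl[\tr_{\dot F}(A\mathscr{W})+a^{2}\tr(\dot F)\bigr]H$, and the last two terms read $-\bigl(\bar{F}+(m\beta-1)F\bigr)|A|^{2}+na^{2}\bigl(\bar{F}-(m\beta+1)F\bigr)$. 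Gathering these yields \eqref{evH} directly.

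For \eqref{evF}, I would set $G=F$, which is homogeneous of degree $\alpha=m\beta$, so that $\dot{G}=\dot{F}$ and $\ddot{G}=\ddot{F}$. The crucial simplification is that the two $\ddot{F}$-terms, $-\dot{F}\,\ddot{G}(\nabla\mathscr{W},\nabla\mathscr{W})$ and $\dot{G}\,\ddot{F}(\nabla\mathscr{W},\nabla\mathscr{W})$, cancel. Using the convention $\dot{F}B=\dot{F}_{i}^{j}B_{j}^{i}$ from Section~2 one checks that $\dot{F}\mathscr{W}^{2}=\tr_{\dot F}(A\mathscr{W})$, so the remaining four scalar terms are
\[
m\beta F\bigl[\tr_{\dot F}(A\mathscr{W})+a^{2}\tr(\dot F)\bigr]-\bigl(\bar{F}+(m\beta-1)F\bigr)\tr_{\dot F}(A\mathscr{W})+a^{2}\bigl(\bar{F}-(m\beta+1)F\bigr)\tr(\dot F).
\]
The coefficient of $\tr_{\dot F}(A\mathscr{W})$ collapses to $m\beta F-\bar{F}-(m\beta-1)F=F-\bar{F}$, while the coefficient of $a^{2}\tr(\dot F)$ collapses to $m\beta F+\bar{F}-(m\beta+1)F=\bar{F}-F=-(F-\bar{F})$. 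Factoring out $(F-\bar{F})$ produces \eqref{evF}.

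There is no genuine obstacle in this proof; the only point demanding attention is the consistent identification of the various contractions (in particular $\dot{F}\mathscr{W}^{2}=\tr_{\dot F}(A\mathscr{W})$) so that the cancellations between the Euler-type identity $\dot{F}\mathscr{W}=m\beta F$ and the terms inherited from \eqref{evweigar} are correctly executed.
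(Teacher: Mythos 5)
Your proposal is correct and is exactly the route the paper intends: the proposition is stated there as an ``immediate application'' of Theorem~\ref{evGD}, specialized to $G=H$ (degree $1$, with $\dot H=\Id$, $\ddot H=0$) and $G=F$ (degree $m\beta$, where the two $\ddot F$-terms cancel and the Euler identity $\dot F\mathscr{W}=m\beta F$ collapses the coefficients to $F-\bar F$ and $-(F-\bar F)$). Your algebra checks out, including the identification $\dot F\mathscr{W}^{2}=\tr_{\dot F}(A\mathscr{W})$.
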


For the proof of the main theorem, as mentioned in the introduction,
it is convenient for us to define some suitable perturbations of the
second fundamental form. Define the turbulent second fundamental
form
\begin{equation*}
\tilde{h}_{ij}=h_{ij}-a g_{ij}.
\end{equation*}
Denote $\tilde{A}$ (resp. $\tilde{\mathscr{W}}$) the matrix whose
entries are $\tilde{h}_{ij}$ (resp. $\tilde{h}^{i}_{j}$). Then
$\tilde{\lambda}_{i}$ given by
\begin{equation*}
\tilde{\lambda}_{i}=\lambda_{i}-a,\ \ i \in 1,\ldots, n,
\end{equation*}
 are the eigenvalues of $\tilde{\mathscr{W}}$.
 Denote the elementary symmetric functions of the
$\tilde{\lambda}_{i}$ by $\tilde{E}_{r}, 1\leq r\leq n.$ From the
definition it follows that
\begin{align*}
\tilde{H}=\tr_{g}{\mathscr{\tilde{W}}}=\tilde{E}_{1}=\sum_{i=1}^{n}\tilde{\lambda}_{i}=H-na,\\
\bigl|\tilde{A}\bigr|^{2}=\tr_{g}({\tilde{\mathscr{W}}^{t}\tilde{\mathscr{W}}})=\sum_{i=1}^{n}\tilde{\lambda}^{2}_{i}
=\bigl|A\bigr|^{2}+na^2-2Ha,\\
\tilde{K}=\det{\mathscr{\tilde{W}}}=\det\{\tilde{h}^{i}_{j}\}=\prod_{i=1}^{n}\tilde{\lambda}_{i}.
\end{align*}
It is easy to check that
$$\nabla_{k}\tilde{h}_{ij}=\nabla_{k}h_{ij},$$
and therefore the Codazzi equation hold for
$\nabla_{k}\tilde{h}_{ij}$.

The following theorem is easily obtained from \eqref{evmetric},
\eqref{evimetric}, \eqref{evsecf} and \eqref{evweigar} by the
definitions of $\tilde{A}$ and $\mathscr{\tilde{W}}$.
\begin{theorem}
For the ambient space $N^{n+1}={\mathbb{H}}_{\kappa}^{n+1}$, on any
solution $M_{t}$ of \eqref{vpmthpcf}-\eqref{def barF} the following
hold \allowdisplaybreaks
\begin{align}
\label{evtursecf} \partial_{t}\tilde{A}=&\,\Delta_{\dot{F}}
\tilde{A}+  \ddot{F}(\nabla\tilde{\mathscr{W}}
,\nabla\tilde{\mathscr{W}})
  +\big[\bar{F} - (m \beta + 1) F\big] A \tilde{\mathscr{W}}\\
  &+a\big[(m \beta + 1) F - \bar{F}\big]
  \tilde{A}+\tr_{\dot F}(\tilde{A}\tilde{\mathscr{W}} )\, A .\notag\\
 \label{evturweigar}
\partial_{t}\tilde{\mathscr{W}}=&\,\Delta_{\dot{F}}
\tilde{\mathscr{W}}+  \ddot{F}(\nabla\tilde{\mathscr{W}}
,\nabla\tilde{\mathscr{W}})
  +\big[(1- m \beta) F-\bar{F} \big] \mathscr{W} \tilde{\mathscr{W}}\\
  &+a\big[(m \beta + 1) F - \bar{F}\big]\tilde{\mathscr{W}}+\tr_{\dot F}(\tilde{A}\tilde{\mathscr{W}} )\, \mathscr{W} .\notag
\end{align}
\end{theorem}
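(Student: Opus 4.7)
The plan is to reduce both \eqref{evtursecf} and \eqref{evturweigar} to purely algebraic manipulations of the already-established equations \eqref{evmetric}, \eqref{evsecf} and \eqref{evweigar}, using only the definitions $\tilde{A}=A-ag$, $\tilde{\mathscr{W}}=\mathscr{W}-a\Id$, and the homogeneity of $F$. The second fundamental form evolves, but $ag$ and $a\Id$ have $t$-derivatives controlled by $\partial_{t}g=2(\bar F-F)A$ and $\partial_{t}\Id=0$ respectively, so the computation splits cleanly.

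First I would compute $\partial_{t}\tilde{A}=\partial_{t}A-a\,\partial_{t}g$, plug in \eqref{evsecf} and \eqref{evmetric}, and observe that the diffusion terms $\Delta_{\dot{F}}A=\Delta_{\dot{F}}\tilde{A}$ and $\ddot{F}(\nabla\mathscr{W},\nabla\mathscr{W})=\ddot{F}(\nabla\tilde{\mathscr{W}},\nabla\tilde{\mathscr{W}})$ pass through unchanged, since $\nabla g=0$, $\nabla\Id=0$ and $\tilde A$, $\tilde{\mathscr{W}}$ differ from $A$, $\mathscr{W}$ by parallel tensors. It then remains to rewrite the zero-order tensors in \eqref{evsecf} in terms of the turbulent quantities. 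For this the key algebraic identities are
\[
A\mathscr{W}=\tilde{A}\tilde{\mathscr{W}}+2a\tilde{A}+a^{2}g=A\tilde{\mathscr{W}}+aA,
\qquad \mathscr{W}^{2}=\mathscr{W}\tilde{\mathscr{W}}+a\mathscr{W},
\]
together with the $\dot F$-trace identity
\[
\tr_{\dot F}(A\mathscr{W})+a^{2}\tr(\dot F)=\tr_{\dot F}(\tilde{A}\tilde{\mathscr{W}})+2am\beta F,
\]
which is obtained by expanding $A\mathscr{W}$ as above and applying Euler's relation $\dot F\mathscr{W}=m\beta F$ recorded in \eqref{F'D}. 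Substituting these and the $-2a(\bar F-F)A$ correction from $\partial_{t}g$, and collecting coefficients of $\tilde{A}$, of $g$, and of $A\tilde{\mathscr{W}}$, the coefficient of $g$ must (and does) cancel identically, and the coefficient of $\tilde{A}$ simplifies to exactly $a[(m\beta+1)F-\bar F]$; this produces \eqref{evtursecf}.

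For the Weingarten map we have the simplification $\partial_{t}\tilde{\mathscr{W}}=\partial_{t}\mathscr{W}$, so I would substitute $\mathscr{W}=\tilde{\mathscr{W}}+a\Id$ directly into \eqref{evweigar}. Using $\mathscr{W}^{2}=\mathscr{W}\tilde{\mathscr{W}}+a\mathscr{W}$ converts the $-[\bar F+(m\beta-1)F]\mathscr{W}^{2}$ term into a $\mathscr{W}\tilde{\mathscr{W}}$ part (whose coefficient is $(1-m\beta)F-\bar F$ as required) plus an $a\mathscr{W}$ part; rewriting the latter via $\mathscr{W}=\tilde{\mathscr{W}}+a\Id$ creates both a $\tilde{\mathscr{W}}$ contribution and an $\Id$ contribution. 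Combining these with the $\tr_{\dot F}$ identity above and the remaining $a^{2}[\bar F-(m\beta+1)F]\Id$ term, one checks that the total $\Id$-coefficient equals $a^{2}[2m\beta F-(m\beta-1)F-(m\beta+1)F]=0$, while the total $\tilde{\mathscr{W}}$-coefficient collapses to $a[(m\beta+1)F-\bar F]$, giving \eqref{evturweigar}.

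There is no analytic obstacle here; the whole argument is bookkeeping, and the only place where one must be careful is the trace identity, because the cross term $2am\beta F$ there is precisely what is needed to cancel the two ``extra'' $\Id$-coefficients in the $\tilde{\mathscr{W}}$ computation and the $g$-coefficients in the $\tilde{A}$ computation. The cleanest presentation is therefore to isolate that identity at the start and then let the substitutions proceed mechanically.
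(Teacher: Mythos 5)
Your proposal is correct and follows essentially the same route as the paper: both reduce everything to algebra on \eqref{evmetric}, \eqref{evsecf}, \eqref{evweigar}, with the trace identity $\tr_{\dot F}(A\mathscr{W})+a^{2}\tr(\dot F)=\tr_{\dot F}(\tilde{A}\tilde{\mathscr{W}})+2am\beta F$ (via Euler's relation \eqref{F'D}) doing the decisive cancellation, and your coefficient bookkeeping checks out. The only cosmetic difference is that for \eqref{evturweigar} the paper combines \eqref{evtursecf} with \eqref{evimetric} through $\tilde{\mathscr{W}}=g^{-1}\tilde{A}$, whereas you substitute directly into \eqref{evweigar} using $\partial_{t}\tilde{\mathscr{W}}=\partial_{t}\mathscr{W}$; the two are equivalent.
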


\begin{proof}
By \eqref{evmetric} and \eqref{evsecf} \allowdisplaybreaks
\begin{align*}
\partial_{t}\tilde{A}=&\,\partial_{t}A-a\,\partial_{t}g\\
=&\,\Delta_{\dot{F}} A
  +  \ddot{F}(\nabla\mathscr{W},\nabla\mathscr{W})
  +\big[ \tr_{\dot F}(A\mathscr{W} )+a^{2}\tr(\dot F)\big] \, A + \big[\bar{F} - (m \beta + 1) F\big] A \mathscr{W}
\\
 & + a^{2}\big[\bar{F}  - (m \beta + 1) F\big]g-2a(\bar{F}-F)A\\
 =&\,\Delta_{\dot{F}} \tilde{A}
  +  \ddot{F}(\nabla\tilde{\mathscr{W}} ,\nabla\tilde{\mathscr{W}})
  +\big[ \tr_{\dot F}(\tilde{A}\tilde{\mathscr{W}} )+2am\beta\,F\big] \, A
  + \big[\bar{F} - (m \beta + 1) F\big] A \tilde{\mathscr{W}}\\
 &+a\big[\bar{F} - (m \beta + 1) F\big] A
 + a^{2}\big[\bar{F} - (m \beta + 1) F\big]g-2a(\bar{F}-F)A\\
 =&\,\Delta_{\dot{F}} \tilde{A}
  +  \ddot{F}(\nabla\tilde{\mathscr{W}} ,\nabla\tilde{\mathscr{W}})
  +\big[\bar{F} - (m \beta + 1) F\big] A \tilde{\mathscr{W}}\\
  &+a\big[(m \beta + 1) F - \bar{F}\big]
  \tilde{A}+\tr_{\dot F}(\tilde{A}\tilde{\mathscr{W}} )\, A,
\end{align*}
where the third line follows by the relation
\begin{align*}
\tr_{\dot F}(A\mathscr{W} )= \tr_{\dot
F}(\tilde{A}\tilde{\mathscr{W}} )+2am\beta\,F-a^{2}\tr(\dot F).
\end{align*}
Then \eqref{evtursecf} and  \eqref{evimetric} together imply
 \eqref{evturweigar}.
\end{proof}

The evolution equation \eqref{evturweigar} of $\tilde{\mathscr{W}}$
applies to give the evolution of any homogeneous function of the
$\tilde{\mathscr{W}}$ defined on an evolving hypersurface $M_{t}$ of
${\mathbb{H}}_{\kappa}^{n+1}$ under the flow
\eqref{vpmthpcf}-\eqref{def barF}.
\begin{theorem}\label{evGD}
If $P$ is a homogeneous function of the turbulent Weingarten map
$\tilde{\mathscr{W}}$ of degree $\gamma$ , then the evolution
equation of $P$ under the flow \eqref{vpmthpcf}-\eqref{def barF} in
${\mathbb{H}}_{\kappa}^{n+1}$ is the following
\begin{align*}
\partial_{t}P=&\,\Delta_{\dot{F}} P
-\dot{F}\ddot{P}(\nabla\tilde{\mathscr{W}
},\nabla\tilde{\mathscr{W}})
  + \dot{P} \ddot{F}(\nabla\tilde{\mathscr{W}} ,\nabla\tilde{\mathscr{W}})
+ \big[(1- m \beta) F-\bar{F} \big] \dot{P}\tilde{\mathscr{W}}^{2}
 \\& +2a\,\gamma\big( F-\bar{F} \big) P
+\tr_{\dot F}(\tilde{A}\tilde{\mathscr{W}} ) \dot{P}\mathscr{W}.
\end{align*}
\end{theorem}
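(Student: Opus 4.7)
The plan is to imitate the derivation of the earlier evolution equation for a homogeneous function $G$ of $\mathscr{W}$, but now starting from \eqref{evturweigar} instead of \eqref{evweigar}. The computation reduces to three algebraic ingredients: the chain rule for $\dot P$, the Hessian identity for homogeneous curvature functions, and the relation $\mathscr{W}=\tilde{\mathscr{W}}+a\Id$.

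First I would write
\[
\partial_{t}P=\dot{P}\,\partial_{t}\tilde{\mathscr{W}},
\]
substitute the right-hand side of \eqref{evturweigar}, and then convert the second-derivative term by the standard identity
\[
\Hess_{\nabla}P=\dot{P}\,\Hess_{\nabla}\tilde{\mathscr{W}}+\ddot{P}(\nabla\tilde{\mathscr{W}},\nabla\tilde{\mathscr{W}}),
\]
which on contracting with $\dot F\,g^{-1}$ gives
\[
\dot{P}\,\Delta_{\dot{F}}\tilde{\mathscr{W}}=\Delta_{\dot{F}}P-\dot{F}\ddot{P}(\nabla\tilde{\mathscr{W}},\nabla\tilde{\mathscr{W}}).
\]
This handles the diffusion and gradient terms exactly as in the proof of the previous Theorem \ref{evGD}.

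Next I would deal with the zero-order terms. Euler's theorem for $P$ of degree $\gamma$ gives $\dot{P}\tilde{\mathscr{W}}=\gamma P$, and since $\mathscr{W}=\tilde{\mathscr{W}}+a\,\Id$ we have the identity
\[
\dot P\,\mathscr{W}\tilde{\mathscr{W}}=\dot P\,\tilde{\mathscr{W}}^{2}+a\,\dot P\,\tilde{\mathscr{W}}=\dot P\,\tilde{\mathscr{W}}^{2}+a\gamma\,P.
\]
Plugging this into the substituted form of \eqref{evturweigar}, the two terms carrying factors of $a$ combine as
\[
\big[(1-m\beta)F-\bar F\big]\,a\gamma P+a\gamma\big[(m\beta+1)F-\bar F\big]P=2a\gamma(F-\bar F)\,P,
\]
while the remaining $\dot P\,\tilde{\mathscr{W}}^{2}$ piece yields the coefficient $(1-m\beta)F-\bar F$ displayed in the statement. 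Finally, the last term in \eqref{evturweigar} is $\tr_{\dot F}(\tilde A\tilde{\mathscr{W}})\,\mathscr{W}$, which upon multiplication by $\dot P$ gives exactly $\tr_{\dot F}(\tilde A\tilde{\mathscr{W}})\,\dot P\,\mathscr{W}$.

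Collecting the pieces produces the stated identity. No genuine obstacle is expected: the computation is direct once the Hessian identity and Euler's relation are applied. The only point requiring a little care is the bookkeeping in the combination of the coefficients of $P$, where the $a$-terms from \eqref{evturweigar} must be added correctly using $\dot P\,\tilde{\mathscr{W}}=\gamma P$ to produce the clean factor $2a\gamma(F-\bar F)$.
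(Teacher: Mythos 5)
Your proof is correct and follows exactly the route the paper intends: the paper omits the proof of this theorem but proves the analogous statement for a homogeneous function $G$ of $\mathscr{W}$ by the same three steps (chain rule $\partial_tP=\dot P\,\partial_t\tilde{\mathscr{W}}$, the Hessian identity converting $\dot P\,\Delta_{\dot F}\tilde{\mathscr{W}}$ into $\Delta_{\dot F}P-\dot F\ddot P(\nabla\tilde{\mathscr{W}},\nabla\tilde{\mathscr{W}})$, and Euler's relation). Your bookkeeping of the $a$-terms via $\mathscr{W}\tilde{\mathscr{W}}=\tilde{\mathscr{W}}^{2}+a\tilde{\mathscr{W}}$ and $\dot P\tilde{\mathscr{W}}=\gamma P$, giving $2a\gamma(F-\bar F)P$, checks out.
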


An immediate application of the theorem above is to obtain the
evolving equations for $\tilde{H}$, and $\tilde{H}^{n}$ and
$\tilde{K}$.
\begin{proposition}
For the ambient space $N^{n+1}={\mathbb{H}}_{\kappa}^{n+1}$, on any
solution $M_{t}$ of \eqref{vpmthpcf}-\eqref{def barF} the following
hold:
\begin{align}\label{evturH}
\partial_{t}\tilde{H}=& \Delta_{\dot{F}} \tilde{H} + \tr
\big[\ddot{F}(\nabla \tilde{\mathscr{W}}, \nabla
\tilde{\mathscr{W}})\big]
- \big(\bar{F} + (m \beta -1) F\big) |\tilde{A}|^2\\
 &+2a(F-\bar{F})\tilde{H}+\tr_{\dot F}(\tilde{A}\tilde{\mathscr{W}} )H\notag,\\
  \label{evturHn}\partial_{t}\tilde{H}^{n}=& \Delta_{\dot{F}} \tilde{H}^{n}
  -n(n-1)\tilde{H}^{n-2}|\tilde{H}|_{\dot{F}}^{2}
   + n\tilde{H}^{n-1}\tr\big[\ddot{F}(\nabla\tilde{\mathscr{W}},
\nabla \tilde{\mathscr{W}})\big]\\& + n\big( (1- m \beta)
F-\bar{F}\big)\tilde{H}^{n-1} |\tilde{A}|^2\
+2an(F-\bar{F})\tilde{H}^{n}\notag\\
&+n\,\tr_{\dot F}(\tilde{A}\tilde{\mathscr{W}} )\tilde{H}^{n}
+an^{2}\tr_{\dot F}(\tilde{A}\tilde{\mathscr{W}}
)\tilde{H}^{n-1}\notag,\\
\label{evturK}\partial_{t}\tilde{K}=&\,\Delta_{\dot{F}} \tilde{K}
-\dot{F}\ddot{\tilde{K}}(\nabla\tilde{\mathscr{W}}
,\nabla\tilde{\mathscr{W}})
  + \dot{\tilde{K}} \ddot{F}(\nabla\tilde{\mathscr{W}} ,\nabla\tilde{\mathscr{W}})
\\&
+ \big[(1- m \beta) F-\bar{F} \big]
\dot{\tilde{K}}\tilde{\mathscr{W}}^{2}
  +2a\,n\big( F-\bar{F} \big) \tilde{K}
+\tr_{\dot F}(\tilde{A}\tilde{\mathscr{W}} )
\dot{\tilde{K}}\mathscr{W}.\notag
\end{align}
\end{proposition}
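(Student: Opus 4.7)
The statement to be proved is the proposition listing the evolution equations \eqref{evturH}, \eqref{evturHn}, and \eqref{evturK}. It is advertised as ``an immediate application'' of the preceding Theorem~\ref{evGD}, which supplies the evolution of any homogeneous function $P$ of the turbulent Weingarten map $\tilde{\mathscr{W}}$. So the strategy is to plug the three specific choices $P=\tilde H$, $P=\tilde H^n$, $P=\tilde K$ into that master formula and simplify, using only the linear-algebraic identities $\operatorname{tr}(\tilde{\mathscr{W}})=\tilde H$, $\operatorname{tr}(\tilde{\mathscr{W}}^2)=|\tilde A|^2$, $\operatorname{tr}(\mathscr{W})=H=\tilde H+na$, together with $\nabla_k\tilde h_{ij}=\nabla_k h_{ij}$.

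For \eqref{evturH} I would take $P=\tilde H$, which is homogeneous of degree $\gamma=1$ in $\tilde{\mathscr{W}}$. Then $\dot P_i^{\,j}=\delta_i^{\,j}$ and $\ddot P\equiv 0$, so the term $-\dot F\ddot P(\nabla\tilde{\mathscr{W}},\nabla\tilde{\mathscr{W}})$ drops out and $\dot P\,\ddot F(\nabla\tilde{\mathscr{W}},\nabla\tilde{\mathscr{W}})$ becomes the trace $\operatorname{tr}[\ddot F(\nabla\tilde{\mathscr{W}},\nabla\tilde{\mathscr{W}})]$. The coefficient $[(1-m\beta)F-\bar F]\dot P\tilde{\mathscr{W}}^2$ collapses to $-[\bar F+(m\beta-1)F]|\tilde A|^2$, while $\operatorname{tr}_{\dot F}(\tilde A\tilde{\mathscr{W}})\dot P\mathscr{W}$ becomes $\operatorname{tr}_{\dot F}(\tilde A\tilde{\mathscr{W}})H$. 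Collecting yields exactly \eqref{evturH}.

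For \eqref{evturHn} I would take $P=\tilde H^n$, which is homogeneous of degree $\gamma=n$. One computes $\dot P^{\,j}_i=n\tilde H^{n-1}\delta^{\,j}_i$ and $\ddot P^{\,j}_i{}^{\,l}_k=n(n-1)\tilde H^{n-2}\delta^{\,j}_i\delta^{\,l}_k$. The second-derivative contraction therefore simplifies via
\begin{equation*}
\dot F\,\ddot P(\nabla\tilde{\mathscr{W}},\nabla\tilde{\mathscr{W}})
=n(n-1)\tilde H^{n-2}\dot F^{ab}\nabla_a(\operatorname{tr}\tilde{\mathscr{W}})\nabla_b(\operatorname{tr}\tilde{\mathscr{W}})
=n(n-1)\tilde H^{n-2}|\tilde H|^2_{\dot F},
\end{equation*}
producing the characteristic gradient term. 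The $\dot P\,\ddot F$ term gives $n\tilde H^{n-1}\operatorname{tr}[\ddot F(\nabla\tilde{\mathscr{W}},\nabla\tilde{\mathscr{W}})]$; the $\dot P\tilde{\mathscr{W}}^2$ factor contributes $n\tilde H^{n-1}|\tilde A|^2$; the $2a\gamma(F-\bar F)P$ term gives $2an(F-\bar F)\tilde H^n$. The main bookkeeping step is the last term: $\dot P\mathscr{W}=n\tilde H^{n-1}H=n\tilde H^n+an^2\tilde H^{n-1}$, which is precisely how the two separate summands $n\operatorname{tr}_{\dot F}(\tilde A\tilde{\mathscr{W}})\tilde H^n+an^2\operatorname{tr}_{\dot F}(\tilde A\tilde{\mathscr{W}})\tilde H^{n-1}$ appear in \eqref{evturHn}.

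For \eqref{evturK} I would take $P=\tilde K$, again homogeneous of degree $n$, and invoke Theorem~\ref{evGD} directly: no further simplification is needed beyond inserting $\gamma=n$, since $\dot{\tilde K}$ and $\ddot{\tilde K}$ are already the very objects appearing in the target formula. The only non-trivial step (and the most delicate bookkeeping in the whole proposition) is carrying through the correct sign and split from $\dot P\mathscr{W}=\dot P(\tilde{\mathscr{W}}+a\,\Id)$ when it matters; for $\tilde K$ one simply keeps the expression as $\dot{\tilde K}\mathscr{W}$. I expect no genuine obstacle here: the entire proof is a mechanical specialization of the preceding theorem, and the only place where care is needed is tracking the $H=\tilde H+na$ decomposition that produces the two separate terms in \eqref{evturHn}.
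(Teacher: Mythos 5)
Your proposal is correct and is exactly the route the paper intends: the paper gives no written proof, describing the proposition as an immediate application of the preceding theorem on homogeneous functions of $\tilde{\mathscr{W}}$, and your substitutions $P=\tilde H$, $P=\tilde H^{n}$, $P=\tilde K$ with degrees $1$, $n$, $n$ and the split $\dot P\mathscr{W}=n\tilde H^{n-1}(\tilde H+na)$ reproduce all three equations faithfully.
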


Furthermore, \eqref{evturK} can be rewritten as
\begin{lemma}
For the ambient space $N^{n+1}={\mathbb{H}}_{\kappa}^{n+1}$, on any
solution $M_{t}$ of \eqref{vpmthpcf}-\eqref{def barF} the following
holds
\begin{align}
\label{evturK2}
\partial_{t}\tilde{K}=&\,\Delta_{\dot{F}} \tilde{K}
-\frac{(n-1)}{n}\frac{\left|\nabla \tilde{K}\right|_{\dot
F}^{2}}{\tilde{K}}
+\frac{\tilde{K}}{\tilde{H}^{2}}\left|\tilde{H}\nabla\tilde{\mathscr{W}} -\tilde{\mathscr{W}}\nabla\tilde{H}\right|^{2}_{\dot F,\tilde{b}}\\
&-\frac{\tilde{H}^{2n}}{n\tilde{K}}\left|\nabla(\tilde{K}\tilde{H}^{-n})\right|_{\dot
F}^2 +\tilde{K}\,\tr_{\tilde{b}}\left(
\ddot{F}(\nabla\tilde{\mathscr{W}} ,\nabla
\tilde{\mathscr{W}})\right) \notag
\\& + \big[(1- m \beta) F-\bar{F} \big]
\tilde{K}\tilde{H}
  +2a\,n\big( F-\bar{F} \big) \tilde{K}\notag\\
  &
+n\tr_{\dot F}(\tilde{A}\tilde{\mathscr{W}} ) \tilde{K}+a\tr_{\dot
F}(\tilde{A}\tilde{\mathscr{W}} ) \tilde{K}\tr(\tilde{b}),\notag
\end{align}
where $\tilde{b}:= \tilde{\mathscr{W}}^{-1}$.
\end{lemma}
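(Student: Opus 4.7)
The plan is to rewrite each piece of \eqref{evturK} using the special algebraic structure of $\tilde K=\det\tilde{\mathscr W}$. The central input is the logarithmic derivative of the determinant: writing $\tilde b=\tilde{\mathscr W}^{-1}$, one has
\[
\dot{\tilde K}^{j}_{i}=\tilde K\,\tilde b^{j}_{i},\qquad \ddot{\tilde K}^{jl}_{ik}=\tilde K\bigl(\tilde b^{j}_{i}\tilde b^{l}_{k}-\tilde b^{l}_{i}\tilde b^{j}_{k}\bigr).
\]
These two identities immediately convert the lower-order terms of \eqref{evturK} into their counterparts in \eqref{evturK2}: the factor $\dot{\tilde K}$ in $\dot{\tilde K}\,\ddot F(\nabla\tilde{\mathscr W},\nabla\tilde{\mathscr W})$ becomes $\tilde K\,\tilde b$, producing the weighted trace $\tilde K\,\tr_{\tilde b}\bigl(\ddot F(\nabla\tilde{\mathscr W},\nabla\tilde{\mathscr W})\bigr)$; the term $\dot{\tilde K}\tilde{\mathscr W}^{2}$ equals $\tilde K\tilde{\mathscr W}$, whose trace is $\tilde K\tilde H$; and writing $\mathscr W=\tilde{\mathscr W}+aI$ gives $\tr(\dot{\tilde K}\mathscr W)=\tilde K\,\tr(\tilde b\,\mathscr W)=n\tilde K+a\tilde K\,\tr(\tilde b)$, which reproduces the last line of \eqref{evturK2}.

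It remains to treat the second-derivative contributions $\Delta_{\dot F}\tilde K$ (which stays unchanged) and $-\dot F\,\ddot{\tilde K}(\nabla\tilde{\mathscr W},\nabla\tilde{\mathscr W})$. Plugging in the Hessian formula above, and recalling that $\nabla_{p}\tilde K=\tilde K\,\tilde b^{j}_{i}\nabla_{p}\tilde h^{i}_{j}$, one obtains at once
\[
-\dot F\,\ddot{\tilde K}(\nabla\tilde{\mathscr W},\nabla\tilde{\mathscr W})=-\frac{|\nabla\tilde K|_{\dot F}^{2}}{\tilde K}+\tilde K\,\dot F^{pq}\tilde b^{l}_{i}\tilde b^{j}_{k}\nabla_{p}\tilde h^{i}_{j}\nabla_{q}\tilde h^{k}_{l}.
\]
The remaining task is the purely algebraic identity
\[
\tilde K\,\dot F^{pq}\tilde b^{l}_{i}\tilde b^{j}_{k}\nabla_{p}\tilde h^{i}_{j}\nabla_{q}\tilde h^{k}_{l}-\frac{|\nabla\tilde K|_{\dot F}^{2}}{\tilde K}=T_{1}+T_{2}+T_{3},
\]
where $T_{1},T_{2},T_{3}$ are the three gradient terms in \eqref{evturK2}.

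To verify this identity I would fix a point and pick an orthonormal frame diagonalizing $\tilde{\mathscr W}$, with eigenvalues $\tilde\lambda_{i}$, so that $\tilde b^{j}_{i}=\delta^{j}_{i}/\tilde\lambda_{i}$. Setting $T_{pij}:=\nabla_{p}\tilde h_{ij}$, which is totally symmetric by the Codazzi equation, the left-hand side above equals $\tilde K\sum_{p,i,k}\dot F^{p}T_{pik}^{2}/(\tilde\lambda_{i}\tilde\lambda_{k})-|\nabla\tilde K|_{\dot F}^{2}/\tilde K$. Expanding $|\tilde H\nabla\tilde{\mathscr W}-\tilde{\mathscr W}\nabla\tilde H|_{\dot F,\tilde b}^{2}$ in the same frame yields exactly this same double sum together with cross terms involving $\langle\nabla\tilde K,\nabla\tilde H\rangle_{\dot F}$ and $|\nabla\tilde H|_{\dot F}^{2}$. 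Expanding $T_{3}$ produces the matching cross terms with opposite sign (so they cancel) plus a residual $-\tfrac{1}{n}|\nabla\tilde K|_{\dot F}^{2}/\tilde K$; this residual combines with $T_{1}=-\tfrac{n-1}{n}|\nabla\tilde K|_{\dot F}^{2}/\tilde K$ to account for the full $-|\nabla\tilde K|_{\dot F}^{2}/\tilde K$, completing the verification.

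The main obstacle is not conceptual but notational: one must correctly interpret the weighted norm $|\cdot|_{\dot F,\tilde b}^{2}$ and keep track of indices through three parallel expansions. The resulting identity is a hyperbolic, turbulent-curvature counterpart of Chow's trick for the $K$-flow in \cite{Cho85} and its variants used in Schulze \cite{Sch06} and Cabezas-Rivas--Sinestrari \cite{CS10}, so beyond the substitution of $\tilde h$ for $h$ no new idea is required.
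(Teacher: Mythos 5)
Your proposal is correct and follows essentially the same route as the paper: the paper also reduces everything to $\dot{\tilde K}=\tilde K\tilde b$ together with the two gradient identities \eqref{evtK2}--\eqref{evtK3}, which it attributes to ``a direct calculation as in Lemma 2.2 of \cite{Cho85}'', and your orthonormal-frame expansion of $T_1+T_2+T_3$ is precisely that calculation carried out explicitly (and it does check out, with the $\langle\nabla\tilde K,\nabla\tilde H\rangle_{\dot F}$ and $|\nabla\tilde H|^2_{\dot F}$ terms cancelling between $T_2$ and $T_3$ as you describe). The only cosmetic remark is that the Codazzi symmetry you invoke is not actually needed for this identity — symmetry of $\tilde h_{ij}$ in the last two indices suffices.
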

\begin{proof}
Note that
\begin{equation}
\dot{\tilde{K}}=\tilde{K}\tilde{b},
\end{equation}
this implies
\begin{equation}\label{evtK}
\dot{\tilde{K}}\tilde{\mathscr{W}}^{2}=\tilde{K}\tilde{H},
\end{equation}
and
\begin{equation}\label{evtK1}
\dot{\tilde{K}} \ddot{F}(\nabla\tilde{\mathscr{W}}
,\nabla\tilde{\mathscr{W}})
=\tilde{K}\tilde{b}\ddot{F}(\nabla\tilde{\mathscr{W}}
,\nabla\tilde{\mathscr{W}})=\tilde{K}\,\tr_{\tilde{b}}\left(
\ddot{F}(\nabla\tilde{\mathscr{W}} ,\nabla
\tilde{\mathscr{W}})\right).
\end{equation}
A direct calculation as for example in Lemma $2.2$ of \cite{Cho85}
gives
\begin{equation}\label{evtK2}
-\dot{F}\ddot{\tilde{K}}(\nabla\tilde{\mathscr{W}}
,\nabla\tilde{\mathscr{W}}) =-\frac{\left|\nabla
\tilde{K}\right|_{\dot
F}^{2}}{\tilde{K}}-\tilde{K}\,\tr_{\tilde{b}}\left( \nabla\tilde{b}
\nabla \tilde{\mathscr{W}}\right)
\end{equation}
and
\begin{align}\label{evtK3}
-\tilde{K}\,\tr_{\tilde{b}}\left( \nabla\tilde{b} \nabla
\tilde{\mathscr{W}}\right)&=\frac{\tilde{K}}{\tilde{H}^{2}}\left|\tilde{H}\nabla\tilde{\mathscr{W}}
-\tilde{\mathscr{W}}\nabla\tilde{H}\right|^{2}_{\dot F,\tilde{b}}
+\frac{\left|\nabla \tilde{K}\right|_{\dot
F}^{2}}{n\tilde{K}}-\frac{\tilde{H}^{2n}}{n\tilde{K}}\left|\nabla(\tilde{K}\tilde{H}^{-n})\right|_{\dot
F}^2.
\end{align}
Therefore, identities \eqref{evtK}, \eqref{evtK1}, \eqref{evtK2} and
\eqref{evtK3} together apply to \eqref{evturK} to give
\eqref{evturK2}.
\end{proof}

\section{Preserving pingching}\label{Preserving pingching}

To control the pinching of the principal curvature along the flow
\eqref{vpmthpcf}-\eqref{def barF} of the Euclidean space, Schulze,
in \cite{Sch06}, following an idea of Tso \cite{Tso}, explored a
test function $Q=K/H^{n}$, which was also considered in \cite{CS10}.
An analogous quantity which is the quotient
$\tilde{Q}=\tilde{K}/\tilde{H}^{n}$ is more natural for our flow. By
the arithmetic-geometric mean inequality, $\tilde{Q}\leq 1/n^{n}$ on
$M_{t}$ and equality holds at a point in $M_{t}$ if and only if
$\tilde{\lambda}_{1}=\cdots=\tilde{\lambda}_{n}$, i.e,
$\lambda_{1}=\cdots=\lambda_{n}$ at the point. Thus, the only
hypersurfaces such that $\tilde{Q}= 1/n^{n}$ are the geodesic
spheres. The rest of this section consists of showing that the
inequality $\tilde{Q}\geq C^{*}$ for a suitable positive constant
$C^{*}$ remains under the evolution.

\begin{lemma}
For the ambient space $N^{n+1}={\mathbb{H}}_{\kappa}^{n+1}$, on any
solution $M_{t}$ of \eqref{vpmthpcf}-\eqref{def barF} the following
hold:
\begin{align}\label{evtildeq}
\partial_{t}\tilde{Q}=&\Delta_{\dot F} \tilde{Q}
+\frac{(n+1)}{n\tilde{H}^{n}}\left\langle\nabla
\tilde{Q},\nabla\tilde{H}^{n}\right\rangle_{\dot F}
-\frac{(n-1)}{n\tilde{K}}\left\langle\nabla \tilde{Q},\nabla\tilde
K\right\rangle_{\dot F}
-\frac{\tilde{H}^{n}}{n\tilde{K}}\left|\nabla\tilde{Q}\right|_{\dot
F}^2
\\
&+\frac{\tilde{Q}}{\tilde{H}^{2}}\left|\tilde{H}\nabla\tilde{\mathscr{W}}
-\tilde{\mathscr{W}}\nabla\tilde{H}\right|^{2}_{\dot F,\tilde{b}}
+\tilde{Q}\,\tr_{\tilde{b}-\frac{n}{\tilde{H}}\Id}\left(
\ddot{F}(\nabla\tilde{\mathscr{W}} ,\nabla \tilde{\mathscr{W}})\right)\notag\\
&+\big[( m \beta-1) F+\bar{F} \big]
\frac{\tilde{Q}}{\tilde{H}}\left(n\bigl|\tilde{A}\bigl|^{2}-\tilde{H}^{2}\right)
+a\tilde{Q}\tr_{\dot F}(\tilde{A}\tilde{\mathscr{W}}
)\left(\tr(\tilde{b})-\frac{n^{2}}{\tilde{H}}\right).\notag
\end{align}
\end{lemma}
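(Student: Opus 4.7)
The plan is a direct calculation. Differentiating $\tilde Q = \tilde K \tilde H^{-n}$ in time gives
\[
\partial_t \tilde Q = \tilde H^{-n}\,\partial_t \tilde K - \frac{n\tilde Q}{\tilde H}\,\partial_t \tilde H,
\]
so I would substitute the evolution equations \eqref{evturK2} and \eqref{evturH} for $\tilde K$ and $\tilde H$ into the right-hand side, and then reorganize the resulting expression into the form claimed in \eqref{evtildeq}. The term $(\tilde K/\tilde H^{n+2})|\tilde H\nabla\tilde{\mathscr{W}} - \tilde{\mathscr{W}}\nabla\tilde H|^2_{\dot F,\tilde b}$ from \eqref{evturK2} transfers immediately to the stated $(\tilde Q/\tilde H^2)|\tilde H\nabla\tilde{\mathscr{W}} - \tilde{\mathscr{W}}\nabla\tilde H|^2_{\dot F,\tilde b}$ using $\tilde Q = \tilde K\tilde H^{-n}$, so the real work concerns the Laplacian/gradient terms and the remaining zero-order algebraic terms.

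For the second-order contributions I would use the pointwise identity
\[
\Delta_{\dot F}\tilde Q = \tilde H^{-n}\Delta_{\dot F}\tilde K - \frac{n\tilde Q}{\tilde H}\Delta_{\dot F}\tilde H - \frac{2n}{\tilde H^{n+1}}\langle \nabla \tilde K,\nabla \tilde H\rangle_{\dot F} + \frac{n(n+1)\tilde Q}{\tilde H^{2}}|\nabla \tilde H|^{2}_{\dot F},
\]
obtained by applying $\dot F^{ij}\nabla_i\nabla_j$ to $\tilde Q = \tilde K\tilde H^{-n}$ and expanding. This lets me replace $\tilde H^{-n}\Delta_{\dot F}\tilde K - n\tilde Q\tilde H^{-1}\Delta_{\dot F}\tilde H$ by $\Delta_{\dot F}\tilde Q$ at the price of two explicit gradient remainders. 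I would then gather these with the terms $-\frac{n-1}{n}|\nabla \tilde K|^{2}_{\dot F}/(\tilde K\tilde H^{n})$ and $-\tilde H^{n}|\nabla \tilde Q|^{2}_{\dot F}/(n\tilde K)$ already present in \eqref{evturK2} and re-express everything using
\[
\nabla \tilde K = \tilde H^{n}\,\nabla \tilde Q + \tilde Q\,\nabla \tilde H^{n}.
\]
A small miracle is that the coefficient of $|\nabla \tilde H^{n}|^{2}_{\dot F}$ in the combined expression collapses to zero, leaving exactly $\frac{2}{n\tilde H^{n}}\langle \nabla \tilde Q,\nabla \tilde H^{n}\rangle_{\dot F} - \frac{\tilde H^{n}}{\tilde K}|\nabla \tilde Q|^{2}_{\dot F}$, which a further application of the same substitution rewrites in the three-piece form $\frac{n+1}{n\tilde H^{n}}\langle\nabla \tilde Q,\nabla \tilde H^{n}\rangle_{\dot F} - \frac{n-1}{n\tilde K}\langle\nabla \tilde Q,\nabla \tilde K\rangle_{\dot F} - \frac{\tilde H^{n}}{n\tilde K}|\nabla \tilde Q|^{2}_{\dot F}$ displayed in \eqref{evtildeq}.

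The zero-order assembly is then routine. The two $\ddot F$-traces combine via $\tilde Q\tr_{\tilde b}(\ddot F(\cdot,\cdot)) - (n\tilde Q/\tilde H)\tr(\ddot F(\cdot,\cdot)) = \tilde Q\tr_{\tilde b - n\tilde H^{-1}\Id}(\ddot F(\cdot,\cdot))$. The scalar terms $[(1-m\beta)F - \bar F]\tilde Q\tilde H$ from \eqref{evturK2} and $(n\tilde Q/\tilde H)(\bar F + (m\beta - 1)F)|\tilde A|^{2}$ from \eqref{evturH} collect as $[(m\beta-1)F + \bar F](\tilde Q/\tilde H)(n|\tilde A|^{2} - \tilde H^{2})$, while the two $2an(F-\bar F)\tilde Q$ contributions cancel exactly. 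Finally, substituting $H = \tilde H + na$ in the $-n\tilde Q\tilde H^{-1}\tr_{\dot F}(\tilde A\tilde{\mathscr{W}})H$ piece of \eqref{evturH} and pairing with the $\tr_{\dot F}(\tilde A\tilde{\mathscr{W}})$ contributions of \eqref{evturK2} produces the last term $a\tilde Q\tr_{\dot F}(\tilde A\tilde{\mathscr{W}})(\tr(\tilde b) - n^{2}/\tilde H)$.

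The only genuine obstacle is algebraic bookkeeping: one must keep careful track of powers of $\tilde H$ and apply the identities $\tilde Q/\tilde K = \tilde H^{-n}$ and $\nabla \tilde K = \tilde H^{n}\nabla\tilde Q + \tilde Q\nabla\tilde H^{n}$ at the right moments so that the intermediate $|\nabla \tilde H^{n}|^{2}_{\dot F}$ and $|\nabla \tilde H|^{2}_{\dot F}$ contributions cancel and the precise coefficients $\frac{n+1}{n}$, $-\frac{n-1}{n}$, $-\frac{1}{n}$ in the first line of \eqref{evtildeq} emerge. No new geometric or analytic input beyond the evolution equations already established and Euler's homogeneity relation is needed.
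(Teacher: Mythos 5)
Your proposal is correct and follows essentially the same route as the paper: substitute the previously derived evolution equations for $\tilde{K}$ and $\tilde{H}$ (the paper uses \eqref{evturHn} for $\tilde{H}^{n}$, which is equivalent to your use of \eqref{evturH} up to a $|\nabla\tilde{H}|^{2}_{\dot F}$ remainder), identify $\Delta_{\dot F}\tilde{Q}$ via the Leibniz identity, and reorganize the gradient terms using $\nabla\tilde{K}=\tilde{H}^{n}\nabla\tilde{Q}+\tilde{Q}\nabla\tilde{H}^{n}$ — exactly the content of the paper's identities \eqref{secdriveQ}--\eqref{derivateterm}. Your claimed cancellation of the $|\nabla\tilde{H}^{n}|^{2}_{\dot F}$ coefficient and the collection of the zero-order terms (including the exact cancellation of the $2an(F-\bar F)\tilde Q$ contributions and the emergence of the $a\tilde{Q}\tr_{\dot F}(\tilde{A}\tilde{\mathscr{W}})(\tr(\tilde{b})-n^{2}/\tilde{H})$ term from $H=\tilde{H}+na$) all check out.
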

\begin{proof}
By  \eqref{evturHn} and \eqref{evturK2}
\begin{align}
\partial_{t}\tilde{Q}&=\frac{1}{\tilde{H}^{n}}\partial_{t}\tilde{K}
-\frac{1}{\tilde{H}^{2n}}\partial_{t}\tilde{H}^{n}\notag\\
\label{evtildeQ}=&\frac{ \Delta_{\dot{F}} \tilde{K}}{\tilde{H}^{n}}
-\frac{\tilde{K}}{\tilde{H}^{2n}}\Delta_{\dot{F}} \tilde{H}^{n}
 -\frac{(n-1)}{n}\frac{\left|\nabla\tilde{K}\right|_{\dot{F}}^2}{\tilde{K}\tilde{H}^{n}}
-\frac{\tilde{Q}}{n}\left|\nabla\tilde{Q}\right|_{\dot{F}}^2
+n(n-1)\frac{\tilde{Q}}{\tilde{H}^{2}}\left|\nabla\tilde{H}\right|_{\dot{F}}^2
\\
&+\frac{\tilde{Q}}{\tilde{H}^{2}}\left|\tilde{H}\nabla\tilde{\mathscr{W}}
-\tilde{\mathscr{W}}\nabla\tilde{H}\right|^{2}_{\dot F,\tilde{b}}
+\tilde{Q}\,\tr_{\tilde{b}-\frac{n}{\tilde{H}}\Id}\left(
\ddot{F}(\nabla\tilde{\mathscr{W}} ,\nabla \tilde{\mathscr{W}})\right)\notag\\
&+\big[( m \beta-1) F+\bar{F} \big]
\frac{\tilde{Q}}{\tilde{H}}\left(n\bigl|\tilde{A}\bigl|^{2}-\tilde{H}^{2}\right)
+a\tilde{Q}\tr_{\dot F}(\tilde{A}\tilde{\mathscr{W}}
)\left(\tr(\tilde{b})-\frac{n^{2}}{\tilde{H}}\right).\notag
\end{align}
Furthermore, the first derivative and second derivative term in
\eqref{evtildeQ} can be computed as follows, the equality
\[
\nabla\left(\frac{\tilde{K}}{\tilde{H}^{n}}\right)
=\frac{\nabla\tilde{K}}{\tilde{H}^{n}}
-\frac{\tilde{K}}{\tilde{H}^{2n}}\nabla\tilde{H}^{n}
\]
implies
\begin{align}\label{secdriveQ}
\Delta_{\dot F}\left(\frac{\tilde{K}}{\tilde{H}^{n}}\right)
&=\frac{\Delta_{\dot F} \tilde{K}}{\tilde{H}^{n}}
-2\frac{\left\langle\nabla \tilde{H}^{n},\nabla\tilde
K\right\rangle_{\dot F}}{\tilde{H}^{2n}}
+2\frac{\tilde{K}}{\tilde{H}^{3n}}
\big|\nabla\tilde{H}^{n}\big|_{\dot F}^{2}
-\frac{\tilde{K}}{\tilde{H}^{2n}}\Delta_{\dot F} \tilde{H}^{n},
\end{align}
\begin{equation}\label{gradientQ1}
\begin{split}
\left\langle\nabla\left(\frac{\tilde{K}}{\tilde{H}^{n}}\right),\nabla\tilde{H}^{n}
\right\rangle_{\dot F}=\frac{\left\langle\nabla
\tilde{H}^{n},\nabla\tilde K\right\rangle_{\dot F}}{\tilde{H}^{n}}
-2\frac{\tilde{K}}{\tilde{H}^{2n}}\big|\nabla\tilde{H}^{n}\big|_{\dot
F}^{2},
\end{split}
\end{equation}
and
\begin{equation}\label{gradientQ2}
\begin{split}
\left\langle\nabla\left(\frac{\tilde{K}}{\tilde{H}^{n}}\right),\nabla\tilde{K}\right\rangle_{\dot
F} =\frac{\big|\nabla\tilde{K}\big|_{\dot F} ^{2}}{\tilde{H}^{n}}
-\frac{\tilde{K}}{\tilde{H}^{2n}}\left\langle\nabla
\tilde{H}^{n},\nabla\tilde K\right\rangle_{\dot F}.
\end{split}
\end{equation}
From \eqref{secdriveQ}, \eqref{gradientQ1} and \eqref{gradientQ2},
it follows
\begin{equation}\label{derivateterm}
\begin{split}
\frac{ \Delta_{\dot F}\tilde{K}}{\tilde{H}^{n}}&
-\frac{\tilde{K}}{\tilde{H}^{2n}}\Delta_{\dot F}\tilde{H}^{n}
 -\frac{(n-1)}{n}\frac{\left|\nabla\tilde{K}\right|_{\dot
F}^2}{\tilde{K}\tilde{H}^{n}}\\
&=\Delta_{\dot F} \left(\frac{\tilde{K}}{\tilde{H}^{n}}\right)
+\frac{(n+1)}{\tilde{H}^{n}}\left\langle\nabla
\left(\frac{\tilde{K}}{\tilde{H}^{n}}\right),\nabla\tilde{H}^{n}\right\rangle_{\dot
F}\\
&\quad-\frac{(n-1)}{n\tilde{K}}\left\langle\nabla
\left(\frac{\tilde{K}}{\tilde{H}^{n}}\right),\nabla\tilde
K\right\rangle_{\dot F}
-n(n-1)\frac{\tilde{K}}{\tilde{H}^{n+2}}\big|\nabla\tilde{H}^{n}\big|_{\dot
F}^{2}.
\end{split}
\end{equation}
Thus, the equation \eqref{derivateterm} applies to \eqref{evtildeQ}
to give \eqref{evtildeq}.
\end{proof}
In order to apply the maximum principle to \eqref{evtildeq} and show
that $\min_{p\in M_{t}}\tilde{Q}(p,t)$ is non-decreasing in time
some preliminary inequalities are needed in the sequel. The
following elementary property is a consequence of (\cite{CS10},
Lemma 4.2) (see also \cite{Cho85} and \cite{Sch06}).

\begin{lemma}\label{pingching imply convex lemma}
For any $\varepsilon \in (0, 1/n)$ and any $\tilde{\lambda} =
(\tilde{\lambda}_{1},\dots, \tilde{\lambda}_{n}) \in \mathbb{R}^n$
with $\tilde{\lambda}_{i} > 0$ for all $i = 1,\dots, n$, there
exists a constant $C = C(\varepsilon, n) \in  (0, 1/n^n)$  satisfies
\[
\tilde{Q}(\tilde{\lambda}) > C
\]
such that
\[
\tilde{\lambda}_{1}
>\varepsilon\tilde{H}(\tilde{\lambda}).
\]
\end{lemma}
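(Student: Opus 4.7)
The plan is to argue by contradiction, exploiting the fact that $\tilde Q=\tilde K/\tilde H^{\,n}$ is homogeneous of degree $0$ and that by the arithmetic-geometric mean inequality $\tilde Q\le 1/n^{n}$ on $\Gamma_{+}$, with equality only at the diagonal $\tilde\lambda_{1}=\cdots=\tilde\lambda_{n}$. Fix $\varepsilon\in(0,1/n)$ and suppose, for contradiction, that no constant $C\in(0,1/n^{n})$ with the required property exists. Then we can find a sequence $\tilde\lambda^{(k)}=(\tilde\lambda_{1}^{(k)},\dots,\tilde\lambda_{n}^{(k)})\in\Gamma_{+}$ (arranged so that $\tilde\lambda_{1}^{(k)}\le\cdots\le\tilde\lambda_{n}^{(k)}$) such that
\[
\tilde Q(\tilde\lambda^{(k)})\longrightarrow\frac{1}{n^{n}}\qquad\text{and}\qquad \tilde\lambda_{1}^{(k)}\le\varepsilon\,\tilde H(\tilde\lambda^{(k)})\quad\text{for every }k.
\]

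Next I would use the scale invariance of $\tilde Q$ and of the inequality $\tilde\lambda_{1}\le\varepsilon\tilde H$ to normalize the sequence. A convenient choice is to rescale so that $\tilde H(\tilde\lambda^{(k)})=1$. After this normalization each component satisfies $0<\tilde\lambda_{i}^{(k)}\le 1$, so the rescaled sequence lies in the compact set $[0,1]^{n}\cap\{\sum_{i}\tilde\lambda_{i}=1\}$. Passing to a subsequence (Bolzano--Weierstrass), we obtain a limit $\tilde\lambda^{\infty}=(\tilde\lambda_{1}^{\infty},\dots,\tilde\lambda_{n}^{\infty})$ with $\tilde\lambda_{i}^{\infty}\ge 0$, $\sum_{i}\tilde\lambda_{i}^{\infty}=1$, and $\tilde\lambda_{1}^{\infty}\le\varepsilon<1/n$.

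By continuity of $\tilde K$ and $\tilde H$, the hypothesis $\tilde Q(\tilde\lambda^{(k)})\to 1/n^{n}$ passes to the limit and gives $\prod_{i=1}^{n}\tilde\lambda_{i}^{\infty}=1/n^{n}$. In particular $\tilde\lambda_{i}^{\infty}>0$ for every $i$, so $\tilde\lambda^{\infty}\in\Gamma_{+}$, and the equality case of AM--GM (applied to $n$ positive numbers summing to $1$) forces $\tilde\lambda_{1}^{\infty}=\cdots=\tilde\lambda_{n}^{\infty}=1/n$. This contradicts $\tilde\lambda_{1}^{\infty}\le\varepsilon<1/n$, completing the proof.

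There is no real analytical obstacle here; this is a qualitative compactness/continuity argument, and the only thing to be careful about is ensuring the normalization does not push the sequence out of $\Gamma_{+}$, which is why working with $\tilde H=1$ (rather than, say, $\tilde\lambda_{n}=1$) is convenient: the constraint $\tilde\lambda_{i}\ge 0$ together with $\sum\tilde\lambda_{i}=1$ immediately gives compactness, and the limit $\tilde Q=1/n^{n}$ then forces positivity of every component a posteriori. The resulting $C=C(\varepsilon,n)$ can be taken as any value strictly less than $1/n^{n}$ above the threshold produced by the contradiction argument.
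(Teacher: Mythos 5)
Your argument is correct: the negation, the degree-zero homogeneity allowing the normalization $\tilde H=1$, the compactness of the simplex, and the rigidity in AM--GM are all used properly, and the limit inequality $\tilde\lambda_1^\infty\le\varepsilon<1/n$ does contradict the forced equality $\tilde\lambda_i^\infty=1/n$. The paper gives no proof of its own here — it simply cites Lemma 4.2 of Cabezas-Rivas and Sinestrari, whose proof is exactly this normalization-and-compactness argument applied to the (relabeled) positive $n$-tuple — so your proposal matches the intended argument.
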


The following estimate which is a stronger version of Lemma\,2.3
(ii) in \cite{Hui84} can be viewed as a generalisation by
Cabezas-Rivas and Miquel \cite{CS10}.
\begin{lemma}\label{a key estimate}
If $\tilde{H}>0$ and the inequality
$\tilde{\mathscr{W}}>\varepsilon\tilde{H}\,\Id$ is valid with some
$\varepsilon
>0$ at a point on a hypersurface immersed in ${\mathbb{H}}_{\kappa}^{n+1}$,
then $\varepsilon\leq 1/n$ and
\[
\left|\tilde{H}\nabla\tilde{\mathscr{W}}
-\tilde{\mathscr{W}}\nabla\tilde{H}\right|^{2} \geq
\frac{n-1}{2}\varepsilon^{2}\tilde{H}^{2}\left|\nabla\tilde{\mathscr{W}}\right|^{2}.
\]
\end{lemma}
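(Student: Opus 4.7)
The bound $\varepsilon\leq 1/n$ follows at once by summing the pinching hypothesis $\tilde\lambda_i>\varepsilon\tilde H$ over $i=1,\dots,n$: since $\tilde H=\sum_i\tilde\lambda_i>n\varepsilon\tilde H$ and $\tilde H>0$, one must have $\varepsilon<1/n$.

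For the gradient estimate I would argue pointwise. Fix the point at which the hypothesis holds and choose an orthonormal tangent frame $\{e_i\}$ diagonalising $\tilde{\mathscr W}$, so that $\tilde h_{ij}=\tilde\lambda_i\delta_{ij}$ there. Setting $T_{ijk}:=\tilde H\nabla_k\tilde h_{ij}-\tilde h_{ij}\nabla_k\tilde H$, in this frame $T_{ijk}=\tilde H\nabla_k\tilde h_{ij}$ for $i\neq j$, whereas $T_{iik}=\tilde H\nabla_k\tilde h_{ii}-\tilde\lambda_i\nabla_k\tilde H$. The crucial geometric input is Codazzi: since the ambient space has constant sectional curvature \eqref{hyperbolic curvature} so that $\bar R_{\alpha\beta\gamma 0}=0$, one gets $\nabla_k h_{ij}=\nabla_i h_{kj}=\nabla_j h_{ik}$, and these same symmetries pass to $\nabla\tilde h$ because $\nabla_k\tilde h_{ij}=\nabla_k h_{ij}$.

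The plan is then to split $|T|^2=\sum_{i\neq j,\,k}(\tilde H\nabla_k\tilde h_{ij})^2+\sum_{i,k}T_{iik}^2$ and treat the two blocks separately. The off-diagonal block already carries the factor $\tilde H^2$, well above the required $\frac{n-1}{2}\varepsilon^2\tilde H^2$. For the diagonal block I would exploit the trace constraint $\sum_i T_{iik}=0$, which follows from $\sum_i\nabla_k\tilde h_{ii}=\nabla_k\tilde H$, together with Codazzi: by full symmetry the diagonal components $\nabla_k\tilde h_{ii}$ with $k\neq i$ coincide with off-diagonal components $\nabla_i\tilde h_{ik}$ that have already been bounded. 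The remaining genuinely diagonal contributions, of type $\nabla_i\tilde h_{ii}$, would be controlled by a weighted Cauchy--Schwarz inequality in which the pinching bounds $\varepsilon\tilde H<\tilde\lambda_i<(1-(n-1)\varepsilon)\tilde H$ supply the factor $\frac{n-1}{2}\varepsilon^2$.

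The main obstacle I anticipate is the algebraic bookkeeping of this last step: the Codazzi symmetries couple the diagonal and off-diagonal components of $\nabla\tilde h$ in a non-trivial way, so that several contributions have to be balanced before the constant $\frac{n-1}{2}\varepsilon^2$ emerges cleanly. Fortunately this mirrors the argument used in the proof of Huisken's Lemma 2.3(ii) in \cite{Hui84}; the only adjustment required here is the replacement of $h$ and $\mathscr W$ by $\tilde h$ and $\tilde{\mathscr W}$, which is harmless because $\nabla\tilde h=\nabla h$ and the Codazzi identities transfer unchanged.
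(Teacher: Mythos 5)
Your derivation of $\varepsilon\le 1/n$ is correct, and your reduction of the hyperbolic statement to a Euclidean one --- via $\nabla_k\tilde h_{ij}=\nabla_k h_{ij}$ and the observation that the Codazzi symmetries survive unchanged in a constant-curvature ambient space --- is precisely the reduction the paper itself makes: its entire proof is the remark that one may argue exactly as in Lemma 4.1 of \cite{CS10}, only replacing $\mathscr{W}$ by $\tilde{\mathscr{W}}=\mathscr{W}-a\,\Id$. The gap lies in the quantitative step you defer. You propose to finish by mirroring "the argument used in the proof of Huisken's Lemma 2.3(ii)", but that lemma yields only the weaker bound with $|\nabla \tilde H|^2$ on the right-hand side; the paper introduces the present statement explicitly as a \emph{stronger} version of Huisken's Lemma 2.3(ii) precisely because the right-hand side here carries the full $|\nabla\tilde{\mathscr{W}}|^2$. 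Huisken's computation does not produce that, so your fallback citation does not close the proof; the statement you actually need is \cite[Lemma 4.1]{CS10}, and it is exactly this lemma that the pinching argument of Section 4 requires, since the bad term there is quadratic in all of $\nabla\tilde{\mathscr{W}}$, not just in $\nabla\tilde H$.

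Relatedly, the block decomposition as you set it up cannot be closed block by block, so the "weighted Cauchy--Schwarz" you gesture at is not a routine step. At a point where $\tilde\lambda_1=\dots=\tilde\lambda_n$ and $\nabla_k\tilde h_{ii}$ is independent of $i$, the diagonal block $\sum_{i,k}T_{iik}^2$ vanishes while $\sum_{i,k}\bigl(\nabla_k\tilde h_{ii}\bigr)^2$ does not; hence the diagonal part of $\bigl|\nabla\tilde{\mathscr{W}}\bigr|^2$ can only be recovered by borrowing from the off-diagonal block through the total symmetry of $\nabla\tilde h$ (the components $\nabla_k\tilde h_{ii}$ with $k\ne i$ reappearing as $\nabla_i\tilde h_{ki}$), with the pinching hypothesis entering only for the genuinely diagonal components $\nabla_i\tilde h_{ii}$, and with care not to spend the off-diagonal block twice. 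You correctly identify this as the crux, but it is exactly the part that is not carried out, and it is where the constant $\tfrac{n-1}{2}\varepsilon^2$ actually emerges. As written, the proposal is the trivial half of the lemma plus an accurate plan for the hard half, supported by a citation that proves a strictly weaker inequality.
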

\begin{proof}
The proof of the Lemma can be argued exactly as in (\cite{CS10},
Lemma\,4.1), only define $\tilde{\mathscr{W}}:=\mathscr{W}-a\,\Id$
at a point on a hypersurface immersed in
${\mathbb{H}}_{\kappa}^{n+1}$.
\end{proof}

Also as in \cite{CS10}, the preceding two lemmas allow us to prove
the pinching estimate for our flow, which is one of the key steps in
the proof of our main result.
\begin{theorem}
 \label{pinching} There exists a constant $C^{*}=C^{*}(a,n,m,\beta)
\in (0,1/n^n)$ with the following property: if $X:M^n \times [0,T)\,
\rightarrow {\mathbb{H}}_{\kappa}^{n+1}$, with $t \in [0,T)$, is a
smooth solution of \eqref{vpmthpcf}-\eqref{def barF}, with $F$ given
by \eqref{def Hm} for some $\beta \geq 1/m$, such that
\begin{itemize}
\item the initial immersion $X_0$ satisfies \eqref{ini pinching} with the constant $C^{*}$,
\item the solution $M_t=X(M^n,t)$ satisfies $\tilde{H}>0$ for all times $t \in [0,T)$,
\end{itemize}
then the minimum of $\tilde{K}/\tilde{H}^n$ on $M_t$ is
nondecreasing in time.
\end{theorem}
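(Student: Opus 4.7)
The plan is to apply the parabolic maximum principle to the evolution equation \eqref{evtildeq} for $\tilde{Q}=\tilde{K}/\tilde{H}^{n}$. Let $\tilde{Q}_{\min}(t):=\min_{p\in M_{t}}\tilde{Q}(p,t)$, fix $t\in[0,T)$ and a spatial minimum point $p_{t}\in M_{t}$. At $(p_{t},t)$ one has $\nabla\tilde{Q}=0$, which annihilates every first-order term on the right-hand side of \eqref{evtildeq}, and $\Delta_{\dot{F}}\tilde{Q}\geq 0$ because $\dot{F}$ is positive-definite by Lemma \ref{property Hm}(ii). It therefore suffices to show that the four remaining terms on the right of \eqref{evtildeq} sum to a non-negative quantity at $(p_{t},t)$ whenever $\tilde{Q}(p_{t},t)\geq C^{*}$; a standard ODE-comparison argument on the envelope $\tilde{Q}_{\min}$ then yields $\tilde{Q}_{\min}(t)\geq\tilde{Q}_{\min}(0)\geq C^{*}$ for all $t\in[0,T)$.

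The first step is to convert the pointwise pinching into a quantitative control on each principal curvature. Lemma \ref{pingching imply convex lemma} applied at $p_{t}$ turns $\tilde{Q}(p_{t},t)\geq C^{*}$ into $\tilde{\lambda}_{i}(p_{t},t)>\varepsilon\,\tilde{H}(p_{t},t)$ for every $i$, with $\varepsilon=\varepsilon(C^{*},n)\in(0,1/n)$ depending continuously on $C^{*}$ and tending to $1/n$ as $C^{*}\uparrow 1/n^{n}$. In particular $\tilde{\mathscr{W}}$ is positive-definite, $\tilde{b}=\tilde{\mathscr{W}}^{-1}$ has eigenvalues $1/\tilde{\lambda}_{i}$, and all remaining scalars in \eqref{evtildeq} are controlled from above and below in terms of $\tilde{H}$ and $\varepsilon$.

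Next I would check that both zeroth-order contributions in \eqref{evtildeq} are automatically non-negative. Since $m\beta\geq 1$ and $F,\bar{F}\geq 0$ on $\Gamma_{+}$, the factor $(m\beta-1)F+\bar{F}$ is non-negative, while $n|\tilde{A}|^{2}-\tilde{H}^{2}\geq 0$ by Cauchy--Schwarz, so the first zeroth-order term is non-negative. For the second one, $\tr_{\dot{F}}(\tilde{A}\tilde{\mathscr{W}})=\sum_{i}\dot{F}^{i}\tilde{\lambda}_{i}^{2}\geq 0$ by Lemma \ref{property Hm}(ii), whereas $\tr(\tilde{b})-n^{2}/\tilde{H}=\sum_{i}1/\tilde{\lambda}_{i}-n^{2}/\sum_{j}\tilde{\lambda}_{j}\geq 0$ by the arithmetic--harmonic mean inequality. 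These two contributions are precisely the extra terms produced by the negative sectional curvature of the ambient space (through the factor $a$), and here they work in our favour. The Hessian-squared term $\frac{\tilde{Q}}{\tilde{H}^{2}}|\tilde{H}\nabla\tilde{\mathscr{W}}-\tilde{\mathscr{W}}\nabla\tilde{H}|^{2}_{\dot{F},\tilde{b}}$ is likewise manifestly non-negative.

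The main obstacle is the remaining term $\tilde{Q}\,\tr_{\tilde{b}-(n/\tilde{H})\Id}(\ddot{F}(\nabla\tilde{\mathscr{W}},\nabla\tilde{\mathscr{W}}))$, whose sign is a priori indefinite because $F=H_{m}^{\beta}$ need not be concave. Following the scheme of \cite{CS10}, I would exploit the concavity of $H_{m}^{1/m}$ on $\Gamma_{+}$ from Lemma \ref{property Hm}(i), writing $F=(H_{m}^{1/m})^{m\beta}$ and expanding $\ddot{F}$; combined with the pinching $\tilde{\lambda}_{i}>\varepsilon\tilde{H}$ this bounds the absolute value of that term by $C(n,m,\beta)|\nabla\tilde{\mathscr{W}}|^{2}$ with a constant $C$ stable as $\varepsilon\to 1/n$. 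On the other hand, Lemma \ref{a key estimate} together with the pinching provides a lower bound
\[
\frac{\tilde{Q}}{\tilde{H}^{2}}\bigl|\tilde{H}\nabla\tilde{\mathscr{W}}-\tilde{\mathscr{W}}\nabla\tilde{H}\bigr|^{2}_{\dot{F},\tilde{b}}\ \geq\ c(n,m,\beta)\,\varepsilon^{2}\,|\nabla\tilde{\mathscr{W}}|^{2},
\]
with $c>0$ bounded below as $\varepsilon\to 1/n$. Choosing $C^{*}$ sufficiently close to $1/n^{n}$, equivalently $\varepsilon$ sufficiently close to $1/n$, the good Hessian-squared term absorbs the possibly bad $\ddot{F}$-term, yielding $\partial_{t}\tilde{Q}(p_{t},t)\geq 0$ and completing the proof by the maximum principle.
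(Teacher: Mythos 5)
Your overall strategy coincides with the paper's: evaluate \eqref{evtildeq} at a spatial minimum, discard the first--order and Laplacian terms, show the two zeroth--order terms are non-negative ($h$-convexity/Cauchy--Schwarz and the arithmetic--harmonic mean inequality), and then absorb the indefinite $\ddot{F}$-term into the good gradient term via the pinching. The zeroth-order part of your argument is correct, as is the use of Lemma \ref{pingching imply convex lemma} and Lemma \ref{a key estimate}.

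However, the decisive absorption step has a genuine gap. You bound the bad term by $C(n,m,\beta)\,|\nabla\tilde{\mathscr{W}}|^{2}$ with $C$ merely ``stable'' as $\varepsilon\to 1/n$, and the good term from below by $c\,\varepsilon^{2}|\nabla\tilde{\mathscr{W}}|^{2}$; with both constants only bounded, nothing forces $c\varepsilon^{2}\geq C$, so choosing $C^{*}$ close to $1/n^{n}$ does not by itself produce absorption. The mechanism that actually makes this work --- and the heart of the paper's proof --- is that the bad term carries the weight $\tilde{b}-\frac{n}{\tilde{H}}\Id$, whose norm is bounded by $\mathscr{N}(\varepsilon)/\tilde{H}$ with $\mathscr{N}(\varepsilon)\to 0$ as $\varepsilon\to 1/n$ (because $\tilde{b}\to \frac{n}{\tilde{H}}\Id$ when the $\tilde{\lambda}_{i}$ equalize), while the good term's coefficient $\frac{n-1}{2\sqrt{n}}W_{1}(\varepsilon)\varepsilon^{2}$ stays bounded away from zero. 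One then chooses $\varepsilon_{0}$ with $\frac{n-1}{2\sqrt{n}}W_{1}(\varepsilon_{0})\varepsilon_{0}^{2}-W_{2}(\varepsilon_{0})\mathscr{N}(\varepsilon_{0})\geq 0$ and takes $C^{*}$ accordingly. Your write-up, where the constant in front of the $\ddot{F}$-term does not even depend on $\varepsilon$, omits exactly this vanishing factor, so the conclusion does not follow as stated. A secondary, easily repaired point: Lemma \ref{pingching imply convex lemma} presupposes $\tilde{\lambda}_{i}>0$, so before invoking it you need the paper's preliminary continuity argument showing that $\tilde{\lambda}_{1}>0$ persists on $[0,T)$ (otherwise your deduction that $\tilde{\mathscr{W}}$ is positive definite from the pinching is circular).
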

\begin{proof}
The assumption $\tilde{H}>0$ on evolving hypersurface ensures that
the quotient $\tilde{Q}$ is well-defined for $t \in [0,T)$. For the
proof of the Theorem, it is suffices to prove that the minimum of
$\tilde{Q}$ (denote by $\tilde{\mathcal {Q}}$) is nondecreasing in
time. First, by \eqref{ini pinching}, $\tilde{\lambda}_{1}>0$ on
$M_{t}$ for $t=0$, then this implies that $\tilde{\lambda}_{1}>0$ on
$M_{t}$ for $t \in [0,T)$ by a contradiction argument. In fact,
suppose to the contrary that there exists a first time $t_{0}>0$ at
which $\tilde{\lambda}_{1}=0$ as some point, then $\tilde{\mathcal
{Q}}(t_{0})=0$. On the other hand, by $\tilde{\lambda}_{1}>0$ on
$M_{t}$ for $t \in [0,t_{0})$, $\tilde{H}>0$ for all times $t \in
[0,t_{0})$. Thus applying the theorem on $[0,t_{0})$ implies that
$\tilde{\mathcal {Q}}(t)$ is nondecreasing in $[0,t_{0})$. So it
cannot decrease from $C^{*}$ to zero as $t$ goes to $t_{0}$, which
gives a contradiction. Now applying the maximum principle to
equation \eqref{evtildeq} for $\tilde{\mathcal {Q}}$ gives
\allowdisplaybreaks
\begin{align}
\partial_{t}\tilde{\mathcal {Q}}\geq
&\,\frac{\tilde{\mathcal
{Q}}}{\tilde{H}^{2}}\left|\tilde{H}\nabla\tilde{\mathscr{W}}
-\tilde{\mathscr{W}}\nabla\tilde{H}\right|^{2}_{\dot F,\tilde{b}}
+\tilde{\mathcal {Q}}\,\tr_{\tilde{b}-\frac{n}{\tilde{H}}\Id}\left(
\ddot{F}(\nabla\tilde{\mathscr{W}} ,\nabla \tilde{\mathscr{W}})\right)\notag\\
&+\big[( m \beta-1) F+\bar{F} \big] \frac{\tilde{\mathcal
{Q}}}{\tilde{H}}\left(n\bigl|\tilde{A}\bigl|^{2}-\tilde{H}^{2}\right)
+a\tilde{\mathcal {Q}}\tr_{\dot F}(\tilde{A}\tilde{\mathscr{W}}
)\left(\tr(\tilde{b})-\frac{n^{2}}{\tilde{H}}\right)\notag\\
\label{evtildeq estimate}\geq &\, \tilde{\mathcal
{Q}}\bigg\{\frac{1}{\tilde{H}^{2}}\left|\tilde{H}\nabla\tilde{\mathscr{W}}
-\tilde{\mathscr{W}}\nabla\tilde{H}\right|^{2}_{\dot F,\tilde{b}}
-\left|\tilde{b}-\frac{n}{\tilde{H}}\Id\right|\,
\left|\ddot{F}(\nabla\tilde{\mathscr{W}} ,\nabla \tilde{\mathscr{W}})\right|\\
&+\big[( m \beta-1) F+\bar{F} \big]
\frac{1}{\tilde{H}}\left(n\bigl|\tilde{A}\bigl|^{2}-\tilde{H}^{2}\right)
+a\tr_{\dot F}(\tilde{A}\tilde{\mathscr{W}}
)\left(\tr(\tilde{b})-\frac{n^{2}}{\tilde{H}}\right)\bigg\}.\notag
\end{align}
The various terms appearing here can be estimated as follows, as in
\cite [Theorem 4.3] {CS10}. The $h$-convexity of $M_{t}$ implies
that the third term of RHS in inequality \eqref{evtildeq estimate}
can be dropped with the strictly $h$-convexity on $M_{t}$. The last
term can also be dropped by the arithmetic-harmonic mean inequality,
\[
\sum_{i=1}^{n}{\tilde{b}}_{i}^{i}-\frac{n^2}{\tilde{H}}\geq 0
\] on $M_{t}$.
It remains to estimate the first two terms of RHS in inequality
\eqref{evtildeq estimate}, now proceeding exactly as in \cite{CS10},
\cite{Cho85} and \cite{Sch06}, choose orthonormal frame which
diagonalises $\tilde{\mathscr{W}}$ so that
\begin{align}\label{mintildeQ estimate}
\left|\tilde{H}\nabla\tilde{\mathscr{W}}
-\tilde{\mathscr{W}}\nabla\tilde{H}\right|^{2}_{\dot F,\tilde{b}}
&=\sum_{i,m,n}{\dot
F}^{i}\frac{1}{\tilde{\lambda}_{m}}\frac{1}{\tilde{\lambda}_{n}}
\left(\tilde{H}\nabla_{i}{\tilde{h}}_{m}^{n}
-{\tilde{h}}_{m}^{n}\nabla_{i}\tilde{H}\right)^{2}\\
&\geq\frac{1}{\tilde{H}^{2}}\sum_{i,m,n}{\dot F}^{i}
\left(\tilde{H}\nabla_{i}{\tilde{h}}_{m}^{n}
-{\tilde{h}}_{m}^{n}\nabla_{i}\tilde{H}\right)^{2}\notag
\end{align}
where $\tilde{\lambda}_{m}\leq \tilde{H}$ was used in the last
inequality by strictly $h$-convexity of $M_{t}$, i.e.,
$\tilde{\lambda}_{m}>0$ for any $m$. Now the property that each
$\dot{F}^i$ is positive in the interior of the positive cone can be
used. More precisely, for any $\varepsilon \in (0,1/n]$
$$
{\Xi}_\varepsilon:=\{ \lambda =(\lambda_1,\dots,\lambda_n) \in
\mathbb{R}^n ~:~ \min_{1 \leq i \leq n} \tilde{\lambda}_i \geq
\varepsilon(\tilde{\lambda}_1+\dots+\tilde{\lambda}_n) >0 \, \},
$$
$$
W_1(\varepsilon)=\min \{ {\dot F}^i (\lambda) ~:~  1 \leq i \leq n,
\leq n, \ {\lambda} \in {\Xi}_\varepsilon, | {\lambda}|=1 \}.
$$
By homogeneity of ${\dot F}^i$ with degree $m\beta-1$ and Lemma
\ref{property Hm} ii), exactly as in  the formula at the top of
p.453 of \cite {CS10}, the following inequality holds:
$$
{\dot F}^i( \lambda) \geq W_1(\varepsilon) |\lambda|^{m\beta-1},
\qquad {\lambda} \in {\Xi}_\varepsilon,
$$
where $W_1(\varepsilon)$ is an increasing positive function of
$\varepsilon$. This estimation, $h$-convexity of a hypersurface and
Lemma \ref{a key estimate} together imply that the inequality
\eqref{mintildeQ estimate} can be estimated as follows:
\begin{equation}\label{mintildeQ estimate2}
\begin{split}
\left|\tilde{H}\nabla\tilde{\mathscr{W}}
-\tilde{\mathscr{W}}\nabla\tilde{H}\right|^{2}_{\dot F,\tilde{b}}
\geq\frac{n-1}{2}W_1(\varepsilon)\varepsilon^{2}
|\mathscr{W}|^{m\beta-1}\left|\nabla\tilde{\mathscr{W}}\right|^{2},
\end{split}
\end{equation}
for some $\varepsilon \in (0, 1/n]$.

The term $\left| \ddot{F}(\nabla\tilde{\mathscr{W}} ,\nabla
\tilde{\mathscr{W}})\right|$ is smooth as long as
$\tilde{\lambda}_{i}>0$ for any $i$, homogeneous of degree
$m\beta-2$ in ${\lambda}_{i}$ and quadratic in
$\nabla\tilde{\mathscr{W}}$. The following estimation from above the
term $\left| \ddot{F}(\nabla\tilde{\mathscr{W}} ,\nabla
\tilde{\mathscr{W}})\right|$ can be derived as in \cite[inequatity
(4.7)]{CS10}: For any $\varepsilon \in (0, 1/n]$, there exists a
constant $W_2(\varepsilon)$ such that, at any point where
$\tilde{\mathscr{W}}\geq \varepsilon \tilde{H} \Id$,
\begin{equation}\label{secder F}
\left| \ddot{F}(\nabla\tilde{\mathscr{W}} ,\nabla
\tilde{\mathscr{W}})\right| \leq W_2(\varepsilon)
|\mathscr{W}|^{m\beta-2}\left|\nabla\tilde{\mathscr{W}}\right|^{2},
\end{equation}
where $W_2(\varepsilon)$ is decreasing in $\varepsilon$.

A next step is to show that
$\left|\tilde{b}-\frac{n}{\tilde{H}}\Id\right|$ is small if the
principal curvatures are pinched enough. It is clear that
\[
\left|\tilde{b}-\frac{n}{\tilde{H}}\Id\right| \leq
\sqrt{n}\max\left\{\left(\frac{1}{\tilde{\lambda}_{1}}
-\frac{n}{\tilde{H}}\right),\left(\frac{n}{\tilde{H}}
-\frac{1}{\tilde{\lambda}_{n}}\right) \right\}.
\]
Since for some $\varepsilon \in (0, 1/n]$
\begin{equation}\label{lower estimate on min.tilde curvature}
\tilde{\lambda}_{1}\geq \varepsilon \tilde{H},
\end{equation}
then
\begin{equation}\label{lower estimate on min.tilde curvature2}
\frac{1}{\tilde{\lambda}_{1}} -\frac{n}{\tilde{H}} \leq
\frac{1-\varepsilon n}{\varepsilon \tilde{H}}.
\end{equation}
On other hand, \eqref{lower estimate on min.tilde curvature} gives
\begin{equation}\label{estimate on max.tilde curvature}
\tilde{\lambda}_{n}\leq \left(1-(n-1)\varepsilon\right)\tilde{H}
\end{equation}
which implies that
\begin{equation}\label{estimate on max.tilde curvature2}
\frac{n}{\tilde{H}} -\frac{1}{\tilde{\lambda}_{n}} \leq
\frac{(n-1)\left(1-n\varepsilon\right)}{\tilde{H}\left(1-(n-1)\varepsilon\right)}.
\end{equation}
This combines with estimate \eqref{lower estimate on min.tilde
curvature2} to give
\begin{equation}\label{eq:2.27}
\left|\tilde{b}-\frac{n}{\tilde{H}}\Id\right| \leq
\frac{\mathscr{N}(\varepsilon)}{\tilde{H}},
\end{equation}
where
\begin{equation}
\mathscr{N}(\varepsilon)=\left\{ \begin{aligned}
&\frac{\sqrt{n}(1-\varepsilon n)}{\varepsilon }, & 0<\varepsilon\leq \frac{1}{2(n-1)}, \notag\\
&\frac{\sqrt{n}(n-1)\left(1-n\varepsilon\right)}{\left(1-(n-1)\varepsilon\right)},
& \frac{1}{2(n-1)}<\varepsilon <\frac{1}{n}.\notag
\end{aligned} \right.
\end{equation}
 Thus, the inequalities $\tilde{H}< H$,
$\big|H\big|^{2}\leq n\big|\mathscr{W}\big|^{2}$, estimations
\eqref{mintildeQ estimate}, \eqref{mintildeQ estimate2},
\eqref{secder F} and \eqref{eq:2.27} together give:

\begin{align}\label{the
first two estimation}
\frac{1}{\tilde{H}^{2}}\left|\tilde{H}\nabla\tilde{\mathscr{W}}
-\tilde{\mathscr{W}}\nabla\tilde{H}\right|^{2}_{\dot F,\tilde{b}}
&-\left|\tilde{b}-\frac{n}{\tilde{H}}\Id\right|\,
\left|\ddot{F}(\nabla\tilde{\mathscr{W}} ,\nabla \tilde{\mathscr{W}})\right|\\
 &\geq \frac{1}{\tilde{H}}
|\mathscr{W}|^{m\beta-2}\left|\nabla\tilde{\mathscr{W}}\right|^{2}
\left(\frac{(n-1)}{2\sqrt{n}}W_1(\varepsilon)\varepsilon^{2}
-W_2(\varepsilon)\mathscr{N}(\varepsilon)\right)\notag
 \end{align}
To achieve our purpose by application of the maximum principle, it
is necessary that
$\mathscr{N}^\prime(\varepsilon):=\left(\frac{(n-1)}{2\sqrt{n}}W_1(\varepsilon)\varepsilon^{2}
-W_2(\varepsilon)\mathscr{N}(\varepsilon)\right)$ is non-negative on
$M_{t}$. In fact, $\mathscr{N}(\varepsilon)$ is a strictly
decreasing function of $\varepsilon$; in addition,
$\mathscr{N}(\varepsilon)$ is arbitrarily large as $\varepsilon$
goes to zero and tends to zero as $\varepsilon$ goes to $1/n$ by its
definition, $W_1(\varepsilon)$ is increasing and $W_2(\varepsilon)$
is decreasing. Therefore, $\mathscr{N}^\prime(\varepsilon)$ is a
strictly increasing function of $\varepsilon$, it is negative as
$\varepsilon$ goes to zero and positive as $\varepsilon$ goes to
$1/n$. So there exists a unique value $\varepsilon_{0}\in (0, 1/n)$
such that
\begin{equation} \label{varepsilon0}
\mathscr{N}^\prime(\varepsilon_{0})=0.
 \end{equation}
By Lemma \ref{pingching imply convex lemma} there exists a constant
$C^{*}\in  (0, 1/n^n)$ satisfies $ \tilde{Q}(\tilde{\lambda}) >
C^{*} $ such that $ \tilde{\lambda}_{1}
>\varepsilon\tilde{H}(\tilde{\lambda})
$ with a $\varepsilon_{0}\in (0, 1/n)$ given by \eqref{varepsilon0}.
Thus, if $\tilde{Q}> C^{*}\geq 0$ everywhere on the initial
hypersurface, applying the maximum principle for $\tilde{Q}$ implies
that $\partial_{t}\tilde{\mathcal {Q}}\geq 0$, i.e.,
$\tilde{\mathcal {Q}}$ is nondecreasing in time. This guarantees
that $\tilde{Q}> C^{*}$ is preserved under the flow
\eqref{vpmthpcf}-\eqref{def barF} in ${\mathbb{H}}_{\kappa}^{n+1}$.
\end{proof}

Theorems asserts that inequality $\tilde{Q}> C^{*}$ holds for all $t
\in [0,T)$, furthermore, the definition of $C^{*}$ and Lemma
\ref{pingching imply convex lemma} together shows that
\begin{equation}\label{tildle lambda pinching}
\tilde{\lambda}_{i}\geq \varepsilon_{0} \tilde{H}\quad \text{on}\
M^{n} \times [0,T) \quad \text{for each}\ i,
\end{equation}
where $\varepsilon_{0}$ is given by \eqref{varepsilon0}, which
implies
\begin{equation}\label{lambda pinching}
\lambda_{i}\geq \varepsilon_{0} H\quad \text{on}\ M^{n} \times [0,T)
\quad \text{for each}\ i.
\end{equation}

\section{Upper bound on $F$}
In this section uniform bounds from above on the speed for the flow
and for the curvature of the hypersurface are derived, depending
only on the initial data. The bounds on curvatures together with the
estimates in the next section will imply the long time existence of
the flow by well-known arguments. In order to achieve this, the
method is to study the evolution under the flow
\eqref{vpmthpcf}-\eqref{def barF} of the function
\begin{equation}\label{Z}
Z_{t}=\frac{F}{\Phi-\epsilon}.
\end{equation}
Here $\Phi=\s_{\kappa}(r_{p})\langle\nu, \partial_{r_{p}}\rangle$,
which could be seen as ``support function" of $M^{n}$ in
${\mathbb{H}}_{\kappa}^{n+1}$, and $\epsilon$ is a constant to be
chosen later. The method used to obtain these bounds is very robust,
and applies to the Gau$\ss$ curvature flow in \cite{Tso}, the flow
with a general class of speeds in \cite{And94}, the
volume-preserving anisotropic mean curvature flow \cite{And01}, the
mixed volume preserving mean curvature flows in \cite{McC04}, the
mixed volume preserving curvature flow in \cite{McC05}, the volume
preserving mean curvature flow in the hyperbolic space in
\cite{CR-M07} and the volume preserving flow by powers of the $m$th
mean curvature in \cite{CS10}.

Given a function $f: \mathbb{R}\rightarrow \mathbb{R}$, $f(r_p)$
will mean $f\circ r_p$. An extension of \cite [Lemma 3]{CR-M07} will
be needed later.
\begin{lemma}\label{extension lemma} In ${\mathbb{H}}_{\kappa}^{n+1}$,
\begin{align}\label{basic formula in H}
\displaystyle &\langle\bar{\nabla}_X\partial_{r_p},Y\rangle =
\bar{\nabla}^2 r_p(X,Y) =
\begin{cases}  0 &\rm{ if}\ \ X=
\partial_{r_p}\\
  \co_{\kappa}(r_p)
\langle X , Y\rangle &\rm{if}\ \ \langle X,\partial_{r_p}\rangle=0
\end{cases} ,\\\label{basic formula2}
&\quad \bar{\Delta}_{\dot F} r_p = \tr(\dot F)\co_{\kappa}(r_p).
\end{align}
Moreover, if $f:\mathbb{R}\longrightarrow\mathbb{R}$ is a $C^2$
function,
\begin{align}\label{odeltafr}
&\bar\Delta_{\dot F} (f(r_p)) =  f''(r_p)|\partial_{r_p}|_{\dot
F}^{2} + f'(r_p)\ \bar\Delta_{\dot F} r_p.
\end{align}
And, for the restriction of $r_p$ to a hypersurface $M$ of
${\mathbb{H}}_{\kappa}^{n+1}$, one has \allowdisplaybreaks
\begin{align}\label{delta formula}
\Delta_{\dot F} r_p =& -\tr(\dot F)\langle \nu, \partial_{r_p}\rangle +  \co_{\kappa}(r_p) \left(\tr(\dot F)\ -   |\partial_{r_p}^\top|_{\dot F}^2\right). \\
 \label{delta f}
 \Delta_{\dot F} (f(r_p)) =&\, f''(r_p)\   |\partial_{r_p}^\top|_{\dot F}^2 + f'(r_p)\   \Delta_{\dot F} r_p \\
 =&\,(  f''(r_p) - f'(r_p) \  \co_{\kappa}(r_p))
|\partial_{r_p}^\top|_{\dot
F}^2  \nonumber\\
&\,+ \ f'(r_p)\  ( \tr(\dot F) \ \co_{\kappa}(r_p) - \tr_{\dot
F}(\mathscr{W}) \langle \nu,
\partial_{r_p}\rangle). \nonumber
\end{align}
\end{lemma}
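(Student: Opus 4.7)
The plan is to establish the lemma in the order it is stated, starting from the Hessian of $r_p$ in the ambient space form and then passing to the hypersurface by a Gauss-formula computation.

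First I would prove \eqref{basic formula in H} by viewing ${\mathbb{H}}_{\kappa}^{n+1}$ as a space of constant sectional curvature $-a^{2}$, where the distance function from $p$ has a well-known Hessian given by Jacobi-field analysis. Since radial geodesics from $p$ are unit-speed with tangent $\partial_{r_p}$, one has $\bar{\nabla}_{\partial_{r_p}}\partial_{r_p}=0$, which together with $\langle\bar{\nabla}r_p,\partial_{r_p}\rangle\equiv 1$ yields $\bar{\nabla}^2 r_p(\partial_{r_p},Y)=0$ for all $Y$. For $X\perp\partial_{r_p}$, a normal-variation Jacobi field along a radial geodesic in a space of curvature $-a^{2}$ satisfies $J''=a^{2}J$, giving $J(r_p)=\s_{\kappa}(r_p)\,J'(0)$; differentiating and rescaling produces $\bar{\nabla}_X\partial_{r_p}=\co_{\kappa}(r_p)X$, which is exactly the claimed formula.

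For \eqref{basic formula2} I would simply take the trace against $\dot F$ of the two cases of \eqref{basic formula in H}, observing that the contribution of the radial direction vanishes so that only the $\bar g$-contracted piece survives, yielding $\bar{\Delta}_{\dot F}r_p=\tr(\dot F)\co_{\kappa}(r_p)$. Then \eqref{odeltafr} follows from the chain rule $\bar{\nabla}f(r_p)=f'(r_p)\partial_{r_p}$ and
\begin{equation*}
\bar{\nabla}^2 f(r_p)=f''(r_p)\,dr_p\otimes dr_p+f'(r_p)\,\bar{\nabla}^2 r_p,
\end{equation*}
by contracting with $\dot F$ and using \eqref{basic formula2}.

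The hypersurface formulas \eqref{delta formula} and \eqref{delta f} follow from the standard Gauss relation: for any smooth function $u$ on ${\mathbb{H}}_{\kappa}^{n+1}$ and tangent vectors $X,Y$ on $M$,
\begin{equation*}
\nabla^2 u(X,Y)=\bar{\nabla}^2 u(X,Y)-\langle \bar{\nabla}u,\nu\rangle\,h(X,Y).
\end{equation*}
Applying this with $u=r_p$, recalling $\langle\bar{\nabla}r_p,\nu\rangle=\langle\nu,\partial_{r_p}\rangle$ and that the tangential part of $\bar{\nabla}^2 r_p$ contracted by $\dot F$ equals $\co_{\kappa}(r_p)(\tr(\dot F)-|\partial_{r_p}^{\top}|_{\dot F}^{2})$ by \eqref{basic formula in H}, gives \eqref{delta formula} after noting that $\dot F^{ij}h_{ij}=\tr_{\dot F}(\mathscr{W})$. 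Finally \eqref{delta f} is obtained by combining the intrinsic chain rule $\nabla^2 f(r_p)=f''(r_p)\,dr_p^{\top}\otimes dr_p^{\top}+f'(r_p)\nabla^2 r_p$ with \eqref{delta formula}, and rearranging the two terms containing $|\partial_{r_p}^{\top}|_{\dot F}^{2}$ so that one of them combines with $f'(r_p)\co_{\kappa}(r_p)\tr(\dot F)$ into the displayed form.

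None of the steps presents a real obstacle; they are essentially bookkeeping once the Hessian of $r_p$ in the space form is known. The only point requiring mild care is the clean separation of tangential versus normal contributions when passing from $\bar{\nabla}^2$ to $\nabla^2$ via the Gauss formula, and keeping track of which traces are $\tr(\dot F)$ versus $\tr_{\dot F}(\mathscr{W})$, a distinction that shows up explicitly in \eqref{delta f}.
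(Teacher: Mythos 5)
Your proposal is correct and follows essentially the same route as the paper: the paper simply cites Petersen for the radial Hessian formula \eqref{basic formula in H} (which you derive via Jacobi fields), and then passes to the hypersurface exactly as you do, via $\bar{\nabla}^{2}r_p(X,Y)=\nabla^{2}r_p(X,Y)+A(X,Y)\langle\partial_{r_p},\nu\rangle$ followed by contraction with $\dot F$. Note that your computation correctly produces $-\tr_{\dot F}(\mathscr{W})\langle\nu,\partial_{r_p}\rangle$ as the first term of \eqref{delta formula} --- the printed $-\tr(\dot F)\langle\nu,\partial_{r_p}\rangle$ there is a typo in the statement, as one sees from the expanded form of \eqref{delta f} and its later use in \eqref{delta s}.
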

\begin{proof}
First \eqref{basic formula in H} and \eqref{basic formula2} follow
from \cite[page 46]{Pet98} (see also \cite{GW79}), and
\eqref{odeltafr} follows form a direct calculation. On the other
hand, the Gau{\ss} and Codazzi equations give the following
\begin{align*}
\Hess_{\bar{\nabla}}r_p(X,Y) & =
\bar{\nabla}^{2}r_p(X,Y)=\langle\bar{\nabla}_X\partial_{r_p},Y\rangle\\
&=\langle\bar{\nabla}_X \nabla r_p,Y\rangle +
A(X,Y)\left\langle\partial_{r_p}, \nu\right\rangle\\
&=\nabla^{2}r_p(X,Y)+ A(X,Y)\left\langle\partial_{r_p}, \nu\right\rangle\\
&=\Hess_{\nabla}r_p(X,Y)+ A(X,Y)\left\langle\partial_{r_p},
\nu\right\rangle.
\end{align*}
This combines with \eqref{basic formula in H} and \eqref{basic
formula2} to give \eqref{delta formula}. \eqref{delta formula} gives
\eqref{delta f} by a direct calculation.
\end{proof}

\begin{corollary}
For $t \in [0, T)$ and any constant $\epsilon$, on any solution
$M_{t}$ of \eqref{vpmthpcf}-\eqref{def barF} in
${\mathbb{H}}_{\kappa}^{n+1}$, the following holds
\begin{align}\label{evZ}
\partial_{t}Z&=\Delta_{\dot F} Z
+\frac{2\left\langle\nabla Z,\nabla\Phi\right\rangle_{\dot
F}}{\Phi-\epsilon} - \frac{\bar{F}}{\Phi-\epsilon}\left(\tr_{\dot
F}(A\mathscr{W})- a^{2}\tr(\dot F)\right)
- \cc_{\kappa}(r)\frac{Z}{\Phi-\epsilon}\bar{F}\\
 &\quad -\epsilon\frac{Z}{\Phi-\epsilon}\tr_{\dot
F}(A\mathscr{W}) - a^{2}\tr(\dot F)Z +
(1+m\beta)\cc_{\kappa}(r)Z^{2}.\nonumber
 \end{align}
\end{corollary}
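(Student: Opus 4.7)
The plan is to derive \eqref{evZ} by separately computing the parabolic operator $(\partial_t - \Delta_{\dot F})\Phi$ applied to $\Phi$ and then combining it with the evolution \eqref{evF} of $F$ via the quotient rule. Writing $F = Z(\Phi - \epsilon)$ and differentiating gives the identity
\[
\partial_t Z - \Delta_{\dot F} Z \;=\; \frac{\partial_t F - \Delta_{\dot F} F}{\Phi - \epsilon} \;+\; \frac{2\langle \nabla Z, \nabla \Phi\rangle_{\dot F}}{\Phi - \epsilon} \;-\; \frac{Z\bigl(\partial_t \Phi - \Delta_{\dot F}\Phi\bigr)}{\Phi - \epsilon},
\]
so the whole task reduces to computing $(\partial_t - \Delta_{\dot F})\Phi$ explicitly and substituting \eqref{evF}.

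For the time derivative, since $\Phi = \s_\kappa(r_p)\langle\nu, \partial_{r_p}\rangle$, I would apply the chain rule using $\partial_t r_p = (\bar F - F)u$ (with $u := \langle \nu, \partial_{r_p}\rangle$), the formula \eqref{evnormal} for $\partial_t\nu$, and the ambient identity $\partial_t \partial_{r_p} = (\bar F - F)\co_\kappa(\nu - u\,\partial_{r_p})$ that comes from pulling $\partial_{r_p}$ along the flow and using \eqref{basic formula in H}. The terms $\cc_\kappa(\bar F - F)u^2$ and $\cc_\kappa(\bar F - F)(1-u^2)$ combine to give
\[
\partial_t \Phi \;=\; \cc_\kappa(r_p)(\bar F - F) \;+\; \s_\kappa(r_p)\,\langle \nabla F, \nabla r_p\rangle.
\]
For the spatial part, a first differentiation yields $\nabla_i \Phi = \s_\kappa h_{ik}(\nabla r_p)^k$ after a clean cancellation between $\cc_\kappa u (\nabla r_p)_i$ and $\s_\kappa \co_\kappa u (\nabla r_p)_i$ (the latter from $\nabla_i u$). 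Differentiating once more and inserting the induced Hessian
\[
\nabla_j \nabla_i r_p \;=\; \co_\kappa\bigl(g_{ij} - (\nabla r_p)_i (\nabla r_p)_j\bigr) - u\,h_{ij},
\]
which follows from \eqref{basic formula in H} combined with the Gauss formula, a second cancellation removes the quadratic $(\nabla r_p)$-terms. Contracting with $\dot F^{ij}$, invoking the Codazzi identity (which has no correction because $\bar R_{\nu k i j} = 0$ in hyperbolic space by \eqref{hyperbolic curvature}) to rewrite $\dot F^{ij}\nabla_i h_{jk} = \nabla_k F$, and applying Euler's identity $\dot F^{ij}h_{ij} = m\beta F$ gives
\[
\Delta_{\dot F}\Phi \;=\; \s_\kappa\langle \nabla F, \nabla r_p\rangle \;+\; m\beta\, \cc_\kappa F \;-\; \Phi\, \tr_{\dot F}(A\mathscr{W}).
\]

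Subtracting, the $\s_\kappa\langle \nabla F, \nabla r_p\rangle$ contributions cancel exactly and I obtain
\[
\partial_t \Phi - \Delta_{\dot F}\Phi \;=\; \cc_\kappa \bar F \;-\; (1+m\beta)\cc_\kappa F \;+\; \Phi\, \tr_{\dot F}(A\mathscr{W}).
\]
Plugging this and $\partial_t F - \Delta_{\dot F} F = (F - \bar F)\bigl[\tr_{\dot F}(A\mathscr{W}) - a^2 \tr(\dot F)\bigr]$ into the identity above, then simplifying using $FZ/(\Phi - \epsilon) = Z^2$ and $\Phi/(\Phi-\epsilon) = 1 + \epsilon/(\Phi-\epsilon)$ to split off the $\epsilon$-correction in the $\Phi\,\tr_{\dot F}(A\mathscr{W})$ term, collects everything into exactly \eqref{evZ}. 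The main technical hurdle is the Hessian bookkeeping: one has to track the paper's sign convention ($h_{ij} = -\langle\bar\nabla_i \partial_j,\nu\rangle$, equivalently $\bar\nabla_{e_i}\nu = h_i^k e_k$) and recognize the two successive cancellations---first between the derivative of $\s_\kappa$ and the $\co_\kappa$-piece in $\nabla u$, then between $\cc_\kappa(\partial_{r_p}^\top)_j h_{ik}(\partial_{r_p}^\top)^k$ and its counterpart from the Hessian of $r_p$---because without them the $(\nabla r_p)$-clutter would obscure the final identity. Once these are handled, everything else is purely algebraic manipulation.
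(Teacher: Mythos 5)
Your proposal is correct and follows essentially the same route as the paper: compute $\partial_t\Phi$ and $\Delta_{\dot F}\Phi$ separately, obtain $\partial_t\Phi-\Delta_{\dot F}\Phi=\cc_\kappa\bar F-(1+m\beta)\cc_\kappa F+\Phi\,\tr_{\dot F}(A\mathscr{W})$, and combine with \eqref{evF} through the quotient rule for $Z=F/(\Phi-\epsilon)$. The only (cosmetic) difference is that you reach $\Delta_{\dot F}\Phi$ by first establishing the clean identity $\nabla_i\Phi=\s_\kappa(r_p)h_{ik}(\nabla r_p)^k$ and differentiating once more, whereas the paper expands $\Delta_{\dot F}\Phi$ term by term via \eqref{delta s}, \eqref{grand term} and \eqref{delta<>}; both yield the same intermediate formula.
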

\begin{proof}
Using \eqref{vpmthpcf} and \eqref{basic formula in H} a direct
calculation gives
\begin{equation}\label{evs}
\bar{\nabla}_{t}(\s_{\kappa}(r_{p})\partial_{r_{p}})
=\cc_{\kappa}(r_{p})(\bar{F}-F)\nu,
\end{equation}
which implies that
\begin{equation}\label{partial t phi}
\partial_{t} \Phi
 =\s_{\kappa}(r_{p})\langle\partial_{r_{p}},\nabla F\rangle
 + \cc_{\kappa}(r_{p})(\bar{F}-F)
\end{equation}
by combining \eqref{evnormal}. On the other hand, a direct
calculation gives
\begin{equation}\label{Delta Phi}
\begin{split}
\Delta_{\dot F}\Phi &=\left\langle\nu,
\partial_{r_{p}}\right\rangle\Delta_{\dot F}\s_{\kappa}(r_{p})
+ 2\left\langle \nabla\s_{\kappa}(r_{p}), \nabla\langle\nu,
\partial_{r_{p}}\rangle\right\rangle_{\dot F}
+\s_{\kappa}(r_{p}) \Delta_{\dot F}\langle\nu,
\partial_{r_{p}}\rangle.
\end{split}
\end{equation}
Taking $f=\s_{\kappa}$ and using \eqref{delta f} give
\begin{equation}\label{delta s}
\Delta_{\dot F} \left(\s_{\kappa}(r_p)\right) =
-\frac{1}{\s_{\kappa}(r_p)} |\partial_{r_p}^\top|_{\dot F}^2 -
\cc_{\kappa}(r_p)\  \tr_{\dot F}(\mathscr{W})\langle\nu,
\partial_{r_p}\rangle + \tr(\dot
F)\frac{\cc_{\kappa}^2}{\s_{\kappa}}(r_p).
\end{equation}
We choose a frame $\{e_{i}\}$ at $p$ which is normal to $\nu$ and
tangent to $M_{t}$. With respect to this frame field, let
$\{e^{i}\}$ be the field of dual frames. Direct computations having
into account \eqref{basic formula in H} give
\begin{align}
&\langle\nabla \ \s_{\kappa}(r_p), \nabla \langle\partial_{r_p}, \nu \rangle\rangle_{\dot F} \label{grand term} \\
& \qquad = -\frac{\cc_{\kappa}(r_p)^2}{\s_{\kappa}}(r_p)
\left\langle\partial_{r_p}, \nu\right\rangle
|\partial_{r_p}^\top|_{\dot F}^2 + \cc_{\kappa}(r_p)\ {\dot F}_{j}^i
\ A(\partial_{r_p}^\top, \langle
\partial_{r_p}^\top, e^{j} \rangle e_{i}). \nonumber
\end{align}
Since
\begin{align}\label{delta1}
\Delta_{\dot F}\langle\nu,
\partial_{r_{p}}\rangle=\langle\nu,
\bar{\Delta}_{\dot
F}\partial_{r_{p}}\rangle+\langle\bar{\Delta}_{\dot F}\nu,
\partial_{r_{p}}\rangle +2 \langle \bar{\nabla}\nu,
\bar{\nabla}\partial_{r_{p}}\rangle_{\dot F},
\end{align}

\begin{align}\label{twotime nabla1}
\langle\nu, \bar{\nabla}_{i}\bar{\nabla}_{j}\partial_{r_{p}}\rangle
=&\frac{1}{\s_{\kappa}^2(r_p)}\langle\partial_{r_p},e_{i}\rangle\langle\partial_{r_p},e_{j}\rangle\langle\partial_{r_p},\nu\rangle
-\co_{\kappa}(r_p)h_{ij}\\
&- \co_{\kappa}^2(r_p)\ g_{ij}\ \langle\nu,\partial_{r_p}\rangle +
2\
\co_{\kappa}^2(r_p)\langle\partial_{r_p},e_{i}\rangle\langle\partial_{r_p},e_{j}\rangle\langle\partial_{r_p},\nu\rangle
\nonumber\\
&+\co_{\kappa}(r_p)\ h_{ij}
\langle\nu,\partial_{r_p}\rangle^2,\nonumber
\end{align}

\begin{align}\label{twotime nabla2}
\langle \bar{\nabla}_{j}\nu, \bar{\nabla}_{i}\partial_{r_{p}}\rangle
=\co_{\kappa}(r_p)\ h_{ij} - \co_{\kappa}(r_p)\
h(\partial_{r_p}^\top, \langle
\partial_{r_p}^\top, e_{j} \rangle e_{i}),
\end{align}

\begin{align}\label{product nabla}
\langle\bar{\nabla}_{i}\bar{\nabla}_{j}\nu, \partial_{r_{p}}\rangle
=&\langle\partial_{r_p},e_{k}\rangle \ \bar{\nabla}_{k}(h_{ij})-
\langle\nu,\partial_{r_p}\rangle h_{i}^{k} h_{kj},
\end{align}
combination of \eqref{delta1}, \eqref{twotime nabla1},
\eqref{twotime nabla2} and \eqref{product nabla} together implies

\begin{align}\label{delta<>}
\Delta_{\dot F} \langle \partial_{r_p}, \nu \rangle & =
 \frac{1}{\s_{\kappa}^2(r_p)}\left\langle\partial_{r_p},\nu\right\rangle |\partial_{r_p}^\top|_{\dot F}^2\
+ \co_{\kappa}(r_p) \ \tr_{\dot F}(\mathscr{W})\\
&\quad  - \tr(\dot F)\ \co_{\kappa}^2(r_p)
\langle\partial_{r_p},\nu\rangle   + 2\ \co_{\kappa}^2(r_p)\
\langle\nu,\partial_{r_p}\rangle |\partial_{r_p}^\top|_{\dot F}^2 \nonumber\\
& \quad+ \co_{\kappa}(r_p) \langle\nu,\partial_{r_p}\rangle^2\
\tr_{\dot F}(\mathscr{W})  - 2\ \co_{\kappa}(r_p)\ {\dot F}_{j}^i \
A(\partial_{r_p}^\top, \langle
\partial_{r_p}^\top, e^{j} \rangle e_{i})   \nonumber \\
& \quad + \langle\partial_{r_p}^\top, \nabla F\rangle-
\left\langle\partial_{r_p}, \nu\right\rangle \ \tr_{\dot
F}(A\mathscr{W}). \nonumber
\end{align}
From \eqref{Delta Phi}, \eqref{delta s}, \eqref{grand term} and
\eqref{delta<>}, it follows
\begin{equation*}
\Delta_{\dot F}\Phi =\co_{\kappa}(r_p) \ \tr_{\dot
F}(\mathscr{W})+\s_{\kappa}(r_{p})\langle\partial_{r_{p}},\nabla
F\rangle - \Phi \ \tr_{\dot F}(A\mathscr{W}).
\end{equation*}
Combining this with \eqref{partial t phi} yields
\begin{equation}\label{ev phi}
\partial_{t} \Phi=\Delta_{\dot F}\Phi + \Phi \ \tr_{\dot F}(A\mathscr{W}) +\cc_{\kappa}(r_p)\left(\bar{F} -
F- \tr_{\dot F}(\mathscr{W})\right).
\end{equation}
From \eqref{ev phi}, \eqref{evF} and \eqref{Z}, it follows
\begin{align}\label{evZ2}
\partial_{t} Z&=\frac{1}{\Phi-\epsilon}
\left(\Delta_{\dot F} F + (F - \bar{F}) \,\big[\tr_{\dot
F} (A \mathscr{W}) - a^{2}\tr(\dot F)\big]\right)\\
&\quad-\frac{F}{(\Phi-\epsilon)^{2}} \left(\Delta_{\dot F}\Phi +
\Phi \ \tr_{\dot F}(A\mathscr{W}) +\cc_{\kappa}(r_p)\left(\bar{F} -
F- \tr_{\dot F}(\mathscr{W})\right)\right).\notag
\end{align}
Another computation leads to
\begin{equation}\label{delta Z}
\begin{split}
\Delta_{\dot F} Z=\frac{\Delta_{\dot F}F}{\Phi-\epsilon} -\frac{
F\Delta_{\dot F}\Phi}{(\Phi-\epsilon)^{2}} -2\frac{1}{\Phi-\epsilon}
\langle\nabla Z,\nabla \Phi\rangle_{\dot F}.
\end{split}
\end{equation}
Replacing \eqref{delta Z} into \eqref{evZ2}, a few more computations
having into account $\tr_{\dot F}(\mathscr{W})=m\beta F$ by Euler's
theorem gives the desired evolution equation \eqref{evZ} of $Z$.
\end{proof}
In order to get a uniform  upper bound on $Z$, previously we have to
give the bounds on $r_{p}$ and $\left\langle\partial_{r_p}.
\nu\right\rangle$. The following estimate on $r_{p}$ for the
preserving volume mean curvature flow in \cite{CR-M07} is also valid
in our case with the help of Lemma \ref{$h$-convex} i).
\begin{lemma}
Let $\psi$ be the inverse of the function $\displaystyle s\mapsto
\vle(S^n) \int_0^s \s(\ell) d\ell$ and $\xi$ the inverse function of
$s\mapsto s+ \ds  a
\ln\frac{\left(1+\sqrt{\ta_{\kappa}(\frac{s}{2})}\right)^2}{1+\ta_{\kappa}(\frac{s}{2})}
$. If $V_0=\vle(\Omega_0)$ and ${\rho_{-}}(t)$ is the inner radius
of $\Omega_t$, then \begin{equation} \label{bound rho}
\xi(\psi(V_0)) \le {\rho_{-}}(t) \le \psi(V_0), \end{equation} for
every $t\in [0,T)$.
\end{lemma}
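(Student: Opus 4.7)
The plan is to prove the two bounds by volume comparison, exploiting that the flow preserves the enclosed volume together with the $h$-convexity of $\Omega_t$ guaranteed by the pinching estimate of Section 4.

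\textbf{Upper bound.} Because the global term $\bar F(t)$ in \eqref{vpmthpcf} makes the flow volume-preserving, $\vle(\Omega_t)=V_0$ for all $t\in[0,T)$. By the definition of the inner radius there exists $q\in{\mathbb{H}}_{\kappa}^{n+1}$ such that $B_{\rho_-(t)}(q)\subset\Omega_t$, so
\[
\vle(S^n)\int_0^{\rho_-(t)}\s(\ell)\,d\ell \;=\;\vle\bigl(B_{\rho_-(t)}(q)\bigr)\;\leq\;\vle(\Omega_t)\;=\;V_0.
\]
Since the integrand is positive, the map $s\mapsto\vle(S^n)\int_0^s\s(\ell)d\ell$ is strictly increasing, hence invertible with inverse $\psi$, and we obtain $\rho_-(t)\leq\psi(V_0)$.

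\textbf{Lower bound.} The pinching preservation (Theorem \ref{pinching}) keeps $\tilde K>C^*\tilde H^n>0$ along the flow, which in particular implies that $\Omega_t$ is $h$-convex for every $t\in[0,T)$. Let $o$ be the center of an inball of $\Omega_t$ realizing the radius $\rho_-(t)$, and set $\tau(t):=\ta_\kappa(a\rho_-(t)/2)$. Applying Lemma \ref{$h$-convex} i) to $\Omega_t$ yields
\[
\Omega_t \;\subset\; B_{R(t)}(o),\qquad R(t):=\rho_-(t)+a\,\ln\frac{(1+\sqrt{\tau(t)})^2}{1+\tau(t)},
\]
where the right-hand side is precisely the function whose inverse defines $\xi$. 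Using volume preservation again,
\[
V_0\;=\;\vle(\Omega_t)\;\leq\;\vle\bigl(B_{R(t)}(o)\bigr)\;=\;\vle(S^n)\int_0^{R(t)}\s(\ell)\,d\ell,
\]
whence $\psi(V_0)\leq R(t)$. Since the defining map of $\xi$ is strictly increasing, applying $\xi$ yields $\xi(\psi(V_0))\leq\rho_-(t)$, which finishes the proof.

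The only nontrivial input is the availability of Lemma \ref{$h$-convex} i); once $h$-convexity of $\Omega_t$ is guaranteed along the flow by the pinching argument of Section \ref{Preserving pingching}, the remainder of the argument is a direct volume comparison between $\Omega_t$ and the inscribed/circumscribed geodesic balls. No substantial obstacle arises beyond verifying that the hypotheses of Lemma \ref{$h$-convex} (in particular strict $h$-convexity) persist for all $t\in[0,T)$, which is exactly the content of \eqref{tildle lambda pinching}.
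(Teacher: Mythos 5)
Your proof is correct and is essentially the argument the paper relies on: the paper simply cites the corresponding lemma of Cabezas-Rivas and Miquel \cite{CR-M07}, whose proof is precisely this volume comparison between $\Omega_t$ and its inscribed ball (for the upper bound) and the circumscribed ball furnished by Lemma \ref{$h$-convex} i) (for the lower bound), using volume preservation and the $h$-convexity guaranteed by Theorem \ref{pinching}. No gap.
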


An immediate consequence of the lemma above and Lemma
\ref{$h$-convex} i) is

\begin{corollary}\label{distance estimate}
For every $t\in [0,T)$, if $p,q \in \Omega_t$, then
\begin{equation}\label{distance estimate2} \dist(p,q) < 2 (\psi(V_0) +  a \ \ln
2).\end{equation}
\end{corollary}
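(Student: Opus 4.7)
The plan is to combine the previous two results in an essentially direct way. First I would note that throughout the interval $[0,T)$ the domain $\Omega_t$ is $h$-convex: the pinching condition $\tilde K > C^*\tilde H^n > 0$ is preserved by Theorem \ref{pinching}, and as remarked at the end of Section \ref{Preserving pingching} this condition forces $\tilde\lambda_i \geq \varepsilon_0 \tilde H > 0$ for every $i$, so in particular $\lambda_i \geq a$ everywhere, which by the characterization recalled in the Remark following Definition of $h$-convexity gives $h$-convexity of $M_t = \partial\Omega_t$.

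Next I would choose $o$ to be the center of an inball of $\Omega_t$ and let $\rho_{-}(t)$ denote the corresponding inner radius. Applying part i) of Lemma \ref{$h$-convex} to the compact $h$-convex domain $\Omega_t$ gives
\[
\maxd(o,\partial \Omega_t)\;<\;\rho_{-}(t)+a\ln 2 .
\]
Any point of $\Omega_t$ has distance at most $\maxd(o,\partial\Omega_t)$ from $o$, because the $h$-convex domain $\Omega_t$ is in particular geodesically convex, so each geodesic from $o$ to a point $p\in\Omega_t$ stays inside $\Omega_t$ until it first meets $\partial\Omega_t$. Hence, for any $p,q\in\Omega_t$, the triangle inequality in $\mathbb{H}^{n+1}_\kappa$ yields
\[
\dist(p,q)\;\leq\;\dist(p,o)+\dist(o,q)\;\leq\;2\,\maxd(o,\partial\Omega_t)\;<\;2\bigl(\rho_{-}(t)+a\ln 2\bigr).
\]

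Finally, the upper bound $\rho_{-}(t)\leq \psi(V_0)$ provided by \eqref{bound rho} (which uses the fact that the flow preserves the enclosed volume $V_0 = \vle(\Omega_0)$) gives
\[
\dist(p,q)\;<\;2\bigl(\psi(V_0)+a\ln 2\bigr),
\]
which is \eqref{distance estimate2}. The only delicate points are the verification of $h$-convexity at each time (already handled by the pinching section) and justifying that the maximal distance $\maxd(o,\partial\Omega_t)$ indeed dominates $\dist(o,p)$ for every $p\in\Omega_t$; both are essentially structural consequences of the $h$-convex geometry, so I expect no real obstacle beyond assembling the cited ingredients in the right order.
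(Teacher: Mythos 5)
Your proposal is correct and follows exactly the route the paper intends: the paper states the corollary as an immediate consequence of Lemma \ref{$h$-convex} i) and the bound $\rho_{-}(t)\leq\psi(V_0)$ from \eqref{bound rho}, combined via the triangle inequality through the inball center, which is precisely what you wrote. The only cosmetic remark is that bounding $\dist(o,p)$ for interior $p$ by $\maxd(o,\partial\Omega_t)$ needs only that the geodesic ray from $o$ through $p$ eventually meets the boundary of the bounded domain, so even the appeal to geodesic convexity is more than is strictly required.
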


Now, if $p_{t_0}\in\Omega_t$ for an arbitrary fixed $t_0 \in [0,T)$,
then using \eqref{distance estimate2} gives an upper bound
$r_{p_{t_0}}(x) \le 2 (\psi(V_0)+  a \ \ln 2)$ for every $x\in M_t$.
Thus, for an upper bound on $F$, it is necessary to show that a
geodesic ball with fixed center remains inside the evolving
$\Omega_t$ for a short time.

\begin{lemma}\label{t0+smalltime}
If $B(p_{t_0},\rho_{t_0}) \subset \Omega_{t_0}$ for some $t_0 \in
[0,T)$, where $\rho_{t_0}={\rho_{-}}(t_0)$ is the inner radius of
$M_{t_0}$, then there exists some constant $\tau=\tau(a, n, m,
\beta, V_0)
>0$ such that $B(p_{t_0},\rho_{t_0}/2) \subset \Omega_t$ for every
$t \in [t_0, \min\{t_0+\tau, T\})$.
\end{lemma}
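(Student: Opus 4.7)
The plan is to parametrize $M_t$ radially from the fixed base point $p_{t_0}$, using the preserved $h$-convexity of $\Omega_t$ to ensure star-shapedness with respect to $p_{t_0}$, and then reduce the ball-inclusion question to a uniform-in-$t$ Lipschitz estimate on the resulting graph function. Set $r=r_{p_{t_0}}$. For $t$ in some open interval around $t_0$ (during which $p_{t_0}$ remains an interior point of $\Omega_t$), write
\[
M_t \;=\; \bigl\{\exp_{p_{t_0}}\bigl(u(\xi,t)\xi\bigr) : \xi \in S^n \subset T_{p_{t_0}}\mathbb{H}_\kappa^{n+1}\bigr\}
\]
with a smooth positive graph function $u$. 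The hypothesis becomes $u(\xi,t_0)\geq\rho_{t_0}$ for every $\xi$, and the conclusion reduces to $u(\xi,t)\geq\rho_{t_0}/2$ on $[t_0,t_0+\tau)$. Differentiating the parametrization and projecting the flow equation $\partial_tX=(\bar F-F)\nu$ onto $\partial_r=\bar\nabla r$ yields
\[
\partial_t u \;=\; \frac{\bar F - F}{\langle \nu,\partial_r\rangle}.
\]

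I would then bound both factors uniformly by constants depending only on $a,n,m,\beta,V_0$. The denominator is bounded below as long as $u\geq\rho_{t_0}/2$: Lemma~\ref{$h$-convex}(ii) gives $\langle\nu,\partial_r\rangle \geq a\,\ta_\kappa(a\rho_{t_0}/2)$, and since $\rho_{t_0}\geq\xi(\psi(V_0))$ by \eqref{bound rho}, this bound depends only on $a$ and $V_0$. For the numerator, the preserved pinching \eqref{lambda pinching} forces all principal curvatures at any single point to be comparable, so $F=H_m^\beta$ spreads across $M_s$ only by a factor depending on $\varepsilon_0,n,m,\beta$; in particular, $|\bar F - F|\leq c_1(n,m,\beta,C^*)\,\bar F$ pointwise. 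To bound $\bar F$ itself, I would combine (i) the outer-radius bound $\rho^+(s)\leq\psi(V_0)+a\ln 2$ from Lemma~\ref{$h$-convex}(i), (ii) the inner-ball barrier -- at a contact point of $M_s$ with its inscribed ball of radius $\rho^-(s)$, comparison with the supporting geodesic sphere forces $\lambda_i \leq a\coth(a\rho^-(s))\leq a\coth(a\xi(\psi(V_0)))$ at that point -- and (iii) a propagation step that transfers the contact-point curvature bound to the rest of $M_s$ using the bounded diameter of $\Omega_s$ together with the pointwise pinching, producing a uniform bound $F\leq c_2(V_0,a,n,m,\beta)$ and hence $\bar F\leq c_2$.

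Combining these controls, $|\partial_t u|\leq C=C(V_0,a,n,m,\beta)$ throughout the regime where $u\geq\rho_{t_0}/2$, so $u(\xi,t)\geq\rho_{t_0}-C(t-t_0)$. Setting $\tau:=\xi(\psi(V_0))/(2C)$ -- a quantity depending only on $a,n,m,\beta,V_0$ -- and closing the argument by a standard openness-closedness continuity bootstrap on $[t_0,t_0+\tau)$ completes the proof. The main obstacle I expect is step (iii) above: no a priori uniform curvature estimate is yet available at this point in the paper (that being the very aim of Section~5 itself), so transferring the inball-contact-point curvature bound to the rest of $M_s$ has to be done purely from the bounded geometry of $\Omega_s$ together with the preserved pinching. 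This geometric step is really the heart of the lemma and must be handled without appealing to any uniform bound on $F$ that will only be established later.
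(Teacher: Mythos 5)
Your overall strategy (radial graph over $p_{t_0}$, $\partial_t u = (\bar F - F)/\langle\nu,\partial_{r}\rangle$, then integrate a bound on $\partial_t u$ in time) founders exactly at the step you flag as the ``main obstacle,'' and that obstacle is not resolvable at this stage of the argument. Your step (iii) asks for a \emph{global} upper bound $F \leq c_2(V_0,a,n,m,\beta)$ on all of $M_s$, obtained by ``propagating'' the curvature bound from the inball contact point to the rest of the hypersurface using bounded diameter and the pointwise pinching. This does not work: the pinching \eqref{lambda pinching} controls the ratio $\lambda_{\max}/\lambda_{\min}$ \emph{at each single point} and gives no comparison whatsoever between the curvature at the contact point and the curvature elsewhere on $M_s$; an $h$-convex hypersurface of bounded diameter and bounded inradius can have arbitrarily large, fully pinched curvature on a region away from the inball contact point. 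For the same reason your intermediate claim $|\bar F - F|\leq c_1\bar F$ pointwise is false — it would require comparing $F$ at different points, which pinching does not provide. A uniform upper bound on $F$ is precisely the content of the theorem that \emph{follows} this lemma in Section~5 and whose proof \emph{uses} this lemma, so invoking it here is circular.

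The paper's proof avoids needing any global speed bound by replacing your global Lipschitz estimate with a parabolic comparison: it lets $r_B(t)$ solve the shrinking-geodesic-sphere ODE $\dot r_B=-\co_\kappa^{m\beta}(r_B)$ with $r_B(t_0)=\rho_{t_0}$ (an explicit computation shows $r_B\geq\rho_{t_0}/2$ for $t-t_0\leq\tau$ with $\tau$ depending only on $a,n,m,\beta,V_0$), sets $f=\varphi(r)-\varphi(r_B)$ with $\varphi'=\ta_\kappa$, and derives $\partial_t f\geq \frac{F}{H}\Delta f+\co_\kappa^{m\beta-1}(r_B)-(H/n)^{m\beta-1}$ after discarding the $\bar F$-term, which is \emph{nonnegative} by $h$-convexity ($\langle\nu,\partial_{r_{p_{t_0}}}\rangle\geq 0$) and hence has a favorable sign. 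The maximum principle then requires an upper bound on $F$ (via $F\leq (H/n)^{m\beta}$) \emph{only at the spatial minimum of $r$}, and there the hypersurface is touched from inside by a geodesic ball of radius $\mathbbm{r}(t)$, forcing $H\leq n\co_\kappa(\mathbbm{r})$ at that point. This localization at the minimum point is the key idea your proposal is missing; without it, the lemma cannot be proved by a pointwise bound on $\partial_t u$.
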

\begin{proof}
Proceeding similarly as in \cite[Lemma 8]{CR-M07}, our procedure is
to compare the deformation of $M_{t}$ by the equation
\eqref{vpmthpcf}-\eqref{def barF} with a geodesic sphere shrinking
under the $H_{m}^{\beta}$-flow.

For convenience, let $r_B(t)$ be the radius at time $t$ of a
geodesic sphere $\partial B(p_{t_0},r_B(t))$ centered at $p_{t_0}$,
evolving under $H_{m}^{\beta}$-flow and with the initial condition
$r_B(t_0) = \rho_{t_0}$. The radius of the evolving geodesic sphere
$\partial B(p_{t_0},r_B(t))$ satisfies
\begin{equation}\label{ev.rB}
\ds \frac{\dif r_B(t)}{\dif t} = - \
\co_{\kappa}^{m\beta} (r_B(t)).
\end{equation}
with the initial condition $r_B(t_0) = \rho_{t_0}$, this ODE has
solution
\begin{equation}\label{ODE solution}
\int_{\rho_{t_0}}^{r}\ta_{\kappa}^{m\beta}(s)\dif s=-(t-t_{0}).
\end{equation}
Denote $\mathcal {F}(r):=\int_{\rho_{t_0}}^{r}\ta_{\kappa}^{m\beta}
(s) \dif s$. Since $\mathcal {F}(r)$ is an increasing function in
$r$, then for $t\geq {t_0}$, $r_B(t) \geq \rho_{t_{0}}/2$ if and
only if
\[
t\, \leq t_{0} +
\int_{\rho_{t_0}/2}^{\rho_{t_0}}\ta_{\kappa}^{m\beta}(s)\dif s.
\]
On the other hand, let $\mathcal
{G}(s)=\int_{s/2}^{s}\ta_{\kappa}^{m\beta}(u)\dif u$, since
$s\mapsto \ta_{\kappa}(s)$ is increasing, then
\begin{equation*}
\frac{\dif\mathcal {G}(s)}{\dif s}>0
\end{equation*}
which shows that $\mathcal {G}(s)$ is increasing function in $s$.
Now using \eqref{bound rho}, this gives that if
\begin{equation}\label{short time}
t-t_{0} \ \leq \
\int_{\xi(\psi(V_0))/2}^{\xi(\psi(V_0))}\ta_{\kappa}^{m\beta}(s)\dif
s=:\tau,
\end{equation}
then
\begin{equation}\label{contained radius}
r_B(t) \ge \rho_{t_0}/2.
\end{equation}
For any $x\in M$, let $r(x,t)= r_{p_{t_0}}(X_t(x))$, from
\eqref{vpmthpcf}, it follows
\begin{equation} \label{evol.rm}
\frac{\dif r}{\dif t} = (\bar{F}(t)-F) \left\langle \nu_t,
\partial_{r_{p_{t_0}}}\right\rangle.
\end{equation}
If $\varphi:\mathbb{R}\rightarrow\mathbb{R}$ is a $C^{2}$ function,
set $f(x,t) = \varphi(r(x,t)) -\varphi(r_B(t))$, from \eqref{ev.rB}
and \eqref{evol.rm}, it follows
\begin{equation} \label{evol.f}
\partial_{t} f= \varphi'(r_{p_{t_0}}) \  (\bar{F}(t)-F) \left\langle \nu_t,
\partial_{r_{p_{t_0}}}\right\rangle + \varphi'(r_{B})\co^{m \beta}_{\kappa}(r_{B}).
\end{equation}
On the other hand, from \eqref{delta f}, it follows
\begin{align*}
\Delta f =&\,\Delta (\varphi(r_{p_{t_0}})) \\
=& \,(  \varphi''(r_{p_{t_0}}) - \varphi'(r_{p_{t_0}})\
\co_{\kappa}(r_{p_{t_0}}) |\partial_{r_p}^\top|^2  \nonumber\\
&+  \ \varphi'(r_{p_{t_0}})\  ( n\ \co_{\kappa}(r_{p_{t_0}}) - H
\langle \nu,
\partial_{r_{p_{t_0}}}\rangle). \nonumber
\end{align*}
Therefore, \eqref{evol.f} can be rewritten as
\begin{align} \label{evlove f}
\partial_{t} f =& \frac{F}{H}\Delta f +\varphi'(r_{p_{t_0}})\left\langle \nu_t,
\partial_{r_{p_{t_0}}}\right\rangle\bar{F}(t)+ \varphi'(r_{B})\co^{m \beta}_{\kappa}(r_{B})
\\
&- n\frac{F}{H}\varphi'(r_{p_{t_0}})\co_{\kappa}(r_{p_{t_0}})
+\frac{F}{H}\left[  \varphi'(r_{p_{t_0}})\ \co_{\kappa}(r_{p_{t_0}})
- \varphi''(r_{p_{t_0}})\right] |\partial_{r_p}^\top|^2. \nonumber
\end{align}
Taking $\varphi'(u)=\ta_{\kappa}(u)$ in \eqref{evlove f} gives
\begin{align} \label{evlove f2}
\partial_{t} f =& \frac{F}{H}\Delta f +\ta_{\kappa}(r_{p_{t_0}})\left\langle \nu_t,
\partial_{r_{p_{t_0}}}\right\rangle\bar{F}(t)+ \co^{m \beta - 1}_{\kappa}(r_{B})
\\
&- n\frac{F}{H}+\frac{F}{H}\left(  1 -
\frac{1}{\cc^{2}_{\kappa}}(r_{p_{t_0}})\right)
|\partial_{r_p}^\top|^2. \nonumber
\end{align}
Now, set $t_1= \inf\{t>t_0:\ p_{t_0}\notin\Omega_t\}$. Because
$\Omega_t$ is $h$-convex, Lemma \ref{$h$-convex} ii) implies
$\left\langle\nu_t, \partial_{r_{p_{t_0}}}\right\rangle \geq 0$ for
any $t \in [t_0, t_1]$. Thus, \eqref{evlove f2} combines with Lemma
\ref{property Hm} iii) and the initial condition to give
\begin{equation} \label{evlove inequality f2}
\left\{
\begin{array}{ll}
\partial_{t} f \geq  \frac{F}{H}\Delta f + \co^{m \beta - 1}_{\kappa}(r_{B})
- \left(\frac{H}{n}\right)^{m \beta - 1}, \\[2ex]
f(x,t_0) = \varphi(r(x,t_0)) -\varphi(\rho_{t_0})   \ge 0.
\end{array}\right.
\end{equation}
Next, set $\mathbbm{r}(t):=\min_{x\in M}r(x,t)$ for any $t \in [t_0,
t_1]$ and $\Theta(t):=\{x\in M \ | r(x,t)= \mathbbm{r}(t)\}$.
Applying  the minimum of $f$ to \eqref {evlove inequality f2} gives
\begin{equation} \label{evlove inequality f2}
\left\{
\begin{array}{ll}
\partial_{t} f_{\min} \geq  \co^{m \beta - 1}_{\kappa}(r_{B})
- \left(\frac{H_{\max}}{n}\right)^{m \beta - 1}, \\[2ex]
f_{\min}(t_0)  \ge 0.
\end{array}\right.
\end{equation}
 Note that any point where the minimum of $f$ is attained is the point
where the minimum of $r$ is attained for any $t \in [t_0, t_1]$, and
at the point the hypersurface is tangent to an inball of radius
$\mathbbm{r}(t)$, which implies that $H_{\max}=
n\co_{\kappa}(\mathbbm{r})$ on any point of $\Theta(t)$. Thus, using
a standard comparison principle concludes that
\begin{equation} \label{comparision}
f(x,t)\geq 0
\end{equation}
for any $t \in [t_0, t_1]$ as long as $f(x,t)$ is well defined for
$t \in [0,T)$, and it follows from \eqref{ODE solution} that
$r_{B}(t)$ is positive for $t \in [t_0, t_0 +
\int_{0}^{\xi(\psi(V_0))}\ta_{\kappa}^{m\beta}(s)\dif s ) [\supset
[t_0, t_0+\tau)]$. Then $f(x,t)\geq 0$  for any $t \in [t_0, \min\{
t_0+\tau, T, t_1\})$.

To complete the proof, assume that $t_1 < \min\{ t_0+\tau, T\}$. By
\eqref{comparision},
\[
r(x, t_1-\zeta) \geq r_B(t_1-\zeta) \ \ \text{for all}\ \zeta \in
(0, t_1-\tau].
\]
Hence by \eqref{contained radius},
\[
r(x, t_1)=\lim_{\zeta\rightarrow 0^{+}}r(x, t_1-\zeta) \geq
r_B(t_1)\geq \rho_{t_0}/2,
\]
which is a contradiction with $r(x, t_1)=r_{p_{0}}(t_1)=0$ by
definition of $t_{1}$. Therefore, $t_1 \geq\min\{ t_0+\tau, T\}$,
which, together with \eqref{comparision} and \eqref{contained
radius}, implies
\[
\mathbbm{r}(t) \geq \rho_{t_0}/2 \quad \text{on}\ [t_0, \min\{
t_0+\tau, T\}),
\]
which completes the proof.
\end{proof}

The above lemma assists us by allowing us to consider a uniform
bound on the speed of the flow.
\begin{theorem}
For $t \in [0,T)$,
\begin{equation}\label{bound on the speed}
F(\cdot,t) < C_{1}= C_{1}(n,m,\beta,a,M_{0}),
\end{equation}
moreover,
\begin{equation}\label{bound on the speed Hm}
H_{m}(\cdot,t) <C_{2}:= C^{1/\beta}_{1}.
\end{equation}
\end{theorem}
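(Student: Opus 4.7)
The strategy follows Tso's auxiliary-function technique, adapted to the hyperbolic setting as in \cite{CR-M07,CS10}: I apply the parabolic maximum principle to $Z=F/(\Phi-\epsilon)$ from \eqref{Z} and extract a differential inequality for $Z_{\max}(t)$ which, thanks to the pinching estimate from Section~\ref{Preserving pingching}, has the decisive super-quadratic decay.

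First, I fix the geometry on a short interval. For any $t_0\in[0,T)$, pick $p_{t_0}$ to be the center of an inball of $\Omega_{t_0}$. Lemma \ref{t0+smalltime} yields a uniform $\tau=\tau(a,n,m,\beta,V_0)>0$ such that $B(p_{t_0},\rho_{t_0}/2)\subset\Omega_t$ for $t\in[t_0,\min\{t_0+\tau,T\})$; combined with \eqref{bound rho} this forces $r_{p_{t_0}}\ge \xi(\psi(V_0))/2$ on $M_t$. By $h$-convexity and Lemma \ref{$h$-convex}(ii) I get $\langle\nu,\partial_{r_{p_{t_0}}}\rangle\ge a\,\ta_\kappa(\xi(\psi(V_0))/2)$, and Corollary \ref{distance estimate} gives an upper bound $r_{p_{t_0}}\le 2(\psi(V_0)+a\ln 2)$. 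Choosing
\[
\epsilon:=\tfrac{a}{2}\,\ta_\kappa(\xi(\psi(V_0))/2)\,\s_\kappa(\xi(\psi(V_0))/2),
\]
I obtain the uniform two-sided bounds $\epsilon\le\Phi-\epsilon\le C_3$, where $C_3$ depends only on $a,n,V_0$.

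Next, I use \eqref{evZ} at a spatial maximum $x_t$ of $Z(\cdot,t)$. There $\nabla Z=0$ and $\Delta_{\dot F}Z\le 0$; the strict $h$-convexity $\lambda_i>a$ (consequence of \eqref{tildle lambda pinching}) makes $\tr_{\dot F}(A\mathscr W)>a^2\tr(\dot F)$, so the terms with coefficients $-\bar F/(\Phi-\epsilon)$, $-\cc_\kappa Z/(\Phi-\epsilon)\,\bar F$ and $-a^2\tr(\dot F)Z$ are all non-positive and can be dropped. What remains is
\[
\partial_t Z_{\max}\le (1+m\beta)\,\cc_\kappa(r)\,Z_{\max}^2-\epsilon\,\frac{Z_{\max}}{\Phi-\epsilon}\,\tr_{\dot F}(A\mathscr W).
\]
The pinching $\lambda_i\ge\varepsilon_0 H$ from \eqref{lambda pinching}, together with Lemma \ref{property Hm}(iii)--(iv) (which give $H\ge n F^{1/(m\beta)}$ and $\tr(\dot F)\ge m\beta\,F^{1-1/(m\beta)}$), yields
\[
\tr_{\dot F}(A\mathscr W)=\sum_i\dot F^i\lambda_i^2\ge\varepsilon_0^2 n^2 m\beta\,F^{1+1/(m\beta)}.
\]
Using $Z_{\max}/(\Phi-\epsilon)=Z_{\max}^2/F$ at the max and $F\ge\epsilon Z_{\max}$ (since $\Phi-\epsilon\ge\epsilon$), I arrive at
\[
\tfrac{d^+}{dt}Z_{\max}\le C_b\,Z_{\max}^2-C_a\,Z_{\max}^{2+1/(m\beta)},
\]
with $C_a,C_b>0$ depending only on $n,m,\beta,a,M_0$.

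Finally I read off the desired bound by ODE comparison. Whenever $Z_{\max}\ge(2C_b/C_a)^{m\beta}$ the right-hand side is $\le -\tfrac12 C_a Z_{\max}^{2+1/(m\beta)}$; integrating $\partial_t(Z_{\max}^{-1/(m\beta)})\ge \frac{C_a}{2m\beta}$ and discarding the (possibly bad) initial value gives $Z_{\max}(t_0+s)\le\bigl(\tfrac{2m\beta}{C_a s}\bigr)^{m\beta}$ as long as $Z_{\max}$ stays above that threshold. Taking $s=\tau/2$, $Z_{\max}(t_0+\tau/2)$ is bounded by a constant $Z^{\ast\ast}=Z^{\ast\ast}(a,n,m,\beta,V_0)$ regardless of $Z_{\max}(t_0)$. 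Hence $F(x,t)\le C_3\,Z_{\max}(t)\le C_3 Z^{\ast\ast}$ on $[t_0+\tau/2,t_0+\tau)$, and since $t_0$ was arbitrary this bound holds on $[\tau/2,T)$; on the initial slab $[0,\tau/2]$ the same argument with $t_0=0$ and the explicit initial bound $Z_{\max}(0)\le F_{\max}(0)/\epsilon$ completes the proof of \eqref{bound on the speed}. The inequality \eqref{bound on the speed Hm} is immediate from $F=H_m^{\beta}$.

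The principal subtlety is the loss of the uniform bound $Z_{\max}(t_0)$ when $t_0$ drifts; it is the super-quadratic term $C_a Z_{\max}^{2+1/(m\beta)}$, whose exponent is strictly greater than $2$ (this is exactly where $m\beta\ge 1$ is used through Lemma \ref{property Hm}(iv)), that generates the forgetfulness of initial data in fixed time $\tau/2$. The hyperbolic correction terms in \eqref{evZ} with factor $a^2$ are, somewhat pleasantly, of the right sign under the $h$-convexity and so do not obstruct the argument.
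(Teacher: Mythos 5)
Your proposal is correct and follows the paper's route almost exactly: the same Tso-type auxiliary function $Z=F/(\Phi-\epsilon)$, the same choice of $\epsilon=\tfrac{a}{2}\s_\kappa(D_1)\ta_\kappa(D_1)$ via Lemma \ref{t0+smalltime} and Lemma \ref{$h$-convex}, the same dropping of the three good-signed terms in \eqref{evZ} by strict $h$-convexity, and the same use of the pinching \eqref{lambda pinching} together with Lemma \ref{property Hm} to produce the super-quadratic term $-C_aZ^{2+1/(m\beta)}$. Where you diverge is the conclusion. The paper argues by contradiction at the first time a large maximum of $Z$ is attained, obtaining $Z\leq\max\{Z_{\max}(t_0),B\}$ on each window $[t_0,\min\{t_0+\tau,T\})$ and then simply asserting the uniform bound "since $t_0$ is arbitrary"; as written this is delicate, because the window-by-window bound still carries $Z_{\max}(t_0)$ and a naive iteration degrades the constant by the factor $\sup(\Phi-\epsilon)/\epsilon>1$ at each re-centering. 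Your ODE-comparison step, which shows that $Z_{\max}$ falls below a universal constant after the fixed waiting time $\tau/2$ \emph{independently} of $Z_{\max}(t_0)$, is exactly what makes the gluing across windows airtight, so your version is if anything more complete than the paper's. One small slip: from $\dot Z\leq-\tfrac12C_aZ^{2+1/(m\beta)}$ one gets $\partial_t\bigl(Z_{\max}^{-1/(m\beta)}\bigr)\geq\tfrac{C_a}{2m\beta}Z_{\max}$, not $\geq\tfrac{C_a}{2m\beta}$; since $Z_{\max}$ stays above the threshold $(2C_b/C_a)^{m\beta}$ on the set where you integrate (or, alternatively, by integrating $Z^{-(1+1/(m\beta))}$ instead), this only changes the explicit constant and not the structure of the argument.
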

\begin{proof}
For any fixed $t_0 \in [0,T)$, let $p_{t_0}$ and $\rho_{t_{0}}$ be
as in Lemma \ref{t0+smalltime}. Then by Corollary \ref{distance
estimate} and Lemma \ref{t0+smalltime}, on the hypersurface $M_{t}$
for every $t \in [t_0, \min\{t_0+\tau, T\})$
\[
D_{1}:=\frac{\xi(\psi(V_0))}{2}\leq r_{p_{t_0}}\leq
\xi(\psi(V_0))=:D_{2}.
\]
Moreover, having into account Lemma \ref{$h$-convex} ii),
\[
\Phi=\s_{\kappa}(r_{p_{t_0}})\left\langle\nu,
\partial_{r_{p_{t_0}}}\right\rangle \geq
a\s_{\kappa}(D_{1})\ta_{\kappa}(D_{1}).
\]
Then, taking the constant $\epsilon =
a\s_{\kappa}(D_{1})\ta_{\kappa}(D_{1})/2$ leads to
\begin{equation}\label{good epsilon}
\Phi-\epsilon\geq\epsilon>0,
\end{equation}
which ensures $Z_{t}=\frac{\;F\;}{\Phi-\epsilon}$ is well-defined on
the same time interval.

Let us go back to the equation \eqref{evZ}, since strict
$h$-convexity holds for each $M_{t}$, $F$, $\bar{F}$ and $\tr_{\dot
F}(A\mathscr{W})- a^{2}\tr(\dot F)$ are all positive, which together
with \eqref{good epsilon}, the two terms containing $\bar{F}$ and
the term $a^{2}\tr(\dot F)Z $ can be neglected. Furthermore, note
that $F$ is homogeneous of degree $m\beta$, Euler's theorem and
\eqref{lambda pinching} together give the following
\[
\tr_{\dot F}(A\mathscr{W})=\dot F ^{i}\lambda^{2}_{i} \geq
\varepsilon_{0}H \dot F ^{i}\lambda_{i}=\varepsilon_{0}m\beta H  F.
\]
Now from the above remark,
\begin{align}\label{ev.inq.Z}
\partial_{t}Z&\leq \Delta_{\dot F} Z
+\frac{2\left\langle\nabla Z,\nabla\Phi\right\rangle_{\dot
F}}{\Phi-\epsilon} -\epsilon\varepsilon_{0}m\beta H Z^{2} +
(1+m\beta)\cc_{\kappa}(D_{2})Z^{2}.
 \end{align}
On the other hand, from \eqref{good epsilon} and Lemma \ref{property
Hm} iii), it follows
\[
Z\leq \frac{F}{\epsilon}\leq \frac{1}{\epsilon}\left(
\frac{H}{n}\right)^{m\beta}.
\]
Applying this to \eqref{ev.inq.Z} gives
\begin{align*}
\partial_{t}Z&
\leq \Delta_{\dot F} Z +\frac{2\left\langle\nabla
Z,\nabla\Phi\right\rangle_{\dot F}}{\Phi-\epsilon}\ +\left(
(1+m\beta)\cc_{\kappa}(D_{2})-\epsilon^{1+\frac{1}{m\beta}}nm\beta\varepsilon_{0}
Z^{\frac{1}{m\beta}}\right)Z^{2}.
 \end{align*}
Assume that in $(\bar{x},\bar{t})$, $\bar{t} \in [t_0,
\min\{t_0+\tau, T\})$,  $Z$ attains a big maximum $C\gg 0$ for the
first time. Then
\[
Z(\bar{x},\bar{t})\geq C(\Psi-\epsilon)(\bar{x},\bar{t})\geq
\epsilon C,
\]
which gives a contradiction if
\[
C> \max_{x\in
M^{n}}\left\{Z(x,t_0),\frac{1}{\epsilon}\left(\frac{\cc_{\kappa}(D_{2})(m\beta+1)}{n\varepsilon_{0}\epsilon
m\beta}\right)^{m\beta}\right\}.
\]
Thus,
\[
Z(x,t)\leq \max_{x\in
M^{n}}\left\{Z(x,t_0),\frac{1}{\epsilon}\left(\frac{\cc_{\kappa}(D_{2})(m\beta+1)}{n\varepsilon_{0}\epsilon
m\beta}\right)^{m\beta}\right\},
\]
on $[t_0, \min\{t_0+\tau, T\})$.

From the definition of $Z(x,t)$ and the upper bound $D_{2}$ of
$\rho_{t}$, it follows
\[
F(x,t)\leq \left(\s_{\kappa}(D_{2})-\epsilon\right)\max_{x\in
M^{n}}\left\{Z(x,t_0),\frac{1}{\epsilon}\left(\frac{\cc_{\kappa}(D_{2})(m\beta+1)}{n\varepsilon_{0}\epsilon
m\beta}\right)^{m\beta}\right\},
\]
on $[t_0, \min\{t_0+\tau, T\})$. Since $t_0$ is arbitrary, and
$\tau$ does not depend to $t_0$, this implies
\begin{align*}
F(x,t)\leq& \left(\s_{\kappa}(D_{2})-\epsilon\right)\max_{x\in
M^{n}}\left\{Z(x,t_0),\frac{1}{\epsilon}\left(\frac{\cc_{\kappa}(D_{2})(m\beta+1)}{n\varepsilon_{0}\epsilon
m\beta}\right)^{m\beta}\right\}\\
&=:C_{1}(n,m,\beta,a,M_{0})
\end{align*}
on $[0, T)$, which is \eqref{bound on the speed}, and so
\eqref{bound on the speed Hm} by the definition of $F$.
\end{proof}

Inserting the estimate \eqref{bound on the speed} into \eqref{ave
barF} immediately gives the following

\begin{corollary}
For $t \in [0,T)$,
\begin{equation}\label{bound on the speed barF}
\bar{F}(t) < C_{1}.
\end{equation}
\end{corollary}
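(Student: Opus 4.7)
The plan is to exploit the pointwise speed estimate \eqref{bound on the speed} together with the definition of $\bar F$ as the surface-area mean of $F$ on the evolving hypersurface. First I would recall from \eqref{ave barF} that
\[
\bar F(t) \;=\; \frac{\int_{M_t} F(\lambda(\mathscr{W}))\, d\mu_t}{\int_{M_t} d\mu_t},
\]
so that $\bar F(t)$ is nothing other than the (surface-measure) average of the function $F$ over $M_t$. Since the preceding theorem has established $F(\cdot,t) < C_1 = C_1(n,m,\beta,a,M_0)$ pointwise on $M^n$ for every $t \in [0,T)$, and since $C_1$ is independent of the point and of time, I can estimate the numerator by
\[
\int_{M_t} F\, d\mu_t \;<\; \int_{M_t} C_1\, d\mu_t \;=\; C_1 \int_{M_t} d\mu_t.
\]
Dividing both sides by the (strictly positive) total area $\int_{M_t} d\mu_t$ immediately yields $\bar F(t) < C_1$, which is \eqref{bound on the speed barF}.

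There is essentially no obstacle here: the statement is a one-line consequence of the elementary fact that a weighted average is bounded above by the pointwise supremum of the integrand, combined with the uniform-in-$t$ pointwise bound on $F$ previously derived. All of the genuine work in producing the constant $C_1$ was carried out in the preceding theorem, where the auxiliary quantity $Z_t = F/(\Phi - \epsilon)$ was analyzed via its evolution equation \eqref{evZ}, making crucial use of the preserved pinching \eqref{lambda pinching}, the $h$-convex lower bound $\Phi \geq a\,\s_{\kappa}(D_1)\ta_{\kappa}(D_1)$ provided by Lemma \ref{$h$-convex}(ii), and the inner-radius control from Lemma \ref{t0+smalltime}. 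The corollary itself adds no new analytic content beyond integration.
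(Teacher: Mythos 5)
Your proof is correct and is exactly the paper's argument: the paper simply notes that inserting the pointwise bound \eqref{bound on the speed} into the definition \eqref{ave barF} of $\bar F$ as a surface-area average immediately gives the result. Nothing further is needed.
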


Hence the speed of the evolving hypersurfaces is bounded.
\begin{corollary}
For $t \in [0,T)$,
\begin{equation}\label{bound on the speed partialtX}
\left|\frac{\partial}{\partial t}\mathrm{X}\left(p,t\right)\right| <
C_{3}:=2C_{1}.
\end{equation}
\end{corollary}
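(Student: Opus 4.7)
The plan is to read off the bound directly from the flow equation \eqref{vpmthpcf}, combined with the uniform upper bounds on $F$ and $\bar{F}$ that have just been established.

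First, I would observe that since $\nu(p,t)$ is by construction a unit normal vector in $T_{\mathrm{X}(p,t)}\mathbb{H}_{\kappa}^{n+1}$, we have $|\nu(p,t)|_{\bar g}=1$. Therefore the flow equation \eqref{vpmthpcf} gives pointwise
\[
\left|\frac{\partial}{\partial t}\mathrm{X}(p,t)\right|
= \bigl|\bar{F}(t)-F(\lambda(\mathscr{W}(p,t)))\bigr|.
\]

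Next, I would apply the triangle inequality together with the uniform bounds \eqref{bound on the speed} and \eqref{bound on the speed barF}: at every $p\in M^n$ and every $t\in[0,T)$,
\[
\bigl|\bar{F}(t)-F(\lambda(\mathscr{W}(p,t)))\bigr|
\leq \bar{F}(t) + F(\lambda(\mathscr{W}(p,t)))
< C_{1} + C_{1} = 2C_{1},
\]
since Theorem in the previous subsection and its corollary give $0<F<C_1$ and $0<\bar F<C_1$ (the positivity of $F$ and $\bar F$ follows from the strict $h$-convexity preserved by Theorem \ref{pinching}, which guarantees $\lambda_i>a>0$ hence $H_m>0$). Combining these two displays yields exactly \eqref{bound on the speed partialtX} with $C_3:=2C_1$.

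There is essentially no obstacle here: the estimate is a direct algebraic consequence of the already-established pointwise bound \eqref{bound on the speed} on $F$ and the bound \eqref{bound on the speed barF} on its mean $\bar F$. The only minor care needed is to note that while the naive bound could in principle be sharpened to $C_1$ (using that $F$ and $\bar F$ have the same sign, so their difference is dominated by the larger one), the symmetric form $2C_1$ is clean and suffices for all subsequent arguments that rely on a uniform $C^{0,1}$-in-time estimate for $\mathrm{X}$.
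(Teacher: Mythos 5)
Your proof is correct and is exactly the (omitted) argument the paper intends: since $|\nu|=1$, the flow equation gives $|\partial_t X|=|\bar F - F|\le \bar F + F < 2C_1$ by \eqref{bound on the speed} and \eqref{bound on the speed barF}. Nothing further is needed.
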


The curvature of $M_{t}$ also remains bounded.

\begin{corollary}
For $t \in [0,T)$,
\begin{equation}\label{bound on the speed H}
\big|\mathscr{W}\big|< H\leq C_{4}.
\end{equation}
\end{corollary}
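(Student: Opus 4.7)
The proof proposal is to combine the pinching estimate of Theorem 4.3 with the uniform speed bound \eqref{bound on the speed}, using Lemma \ref{property Hm} as the interpolation device.

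First I would handle the easy inequality $|\mathscr{W}| < H$. From \eqref{lambda pinching} we know $\lambda_i \geq \varepsilon_0 H > 0$ for every $i$ on $M^n \times [0,T)$, so in particular all principal curvatures are strictly positive and $H>0$. Then
\[
|\mathscr{W}|^2 = \sum_{i=1}^n \lambda_i^2 \leq \Bigl(\sum_{i=1}^n \lambda_i\Bigr)^2 = H^2,
\]
with strict inequality at every point because at least two principal curvatures are positive (recall $n\geq 2$ and pinching forces every $\lambda_i > 0$). Hence $|\mathscr{W}|<H$ pointwise.

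For the upper bound $H\leq C_4$, the plan is to use the pinching in the opposite direction from Lemma \ref{property Hm} iii) to control $H_m$ from below by a power of $H$. Since $\lambda_i \geq \varepsilon_0 H$ for all $i$, each term in the defining sum of $E_m$ satisfies $\lambda_{i_1}\cdots \lambda_{i_m} \geq (\varepsilon_0 H)^m$, and summing over the $\binom{n}{m}$ index tuples yields
\[
H_m = \binom{n}{m}^{-1} E_m \geq (\varepsilon_0 H)^m,
\]
so $F = H_m^\beta \geq \varepsilon_0^{m\beta}\, H^{m\beta}$. Combining with the uniform bound \eqref{bound on the speed}, namely $F < C_1$, gives
\[
\varepsilon_0^{m\beta} H^{m\beta} \leq F < C_1,
\]
so that $H < C_1^{1/(m\beta)}/\varepsilon_0 =: C_4$, where $C_4 = C_4(n,m,\beta,a,M_0)$ since both $C_1$ and $\varepsilon_0$ depend only on these data.

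There is no real obstacle here: the nontrivial work was already done, first in Section 4 in establishing the preserved pinching \eqref{lambda pinching}, and then in Section 5 in producing the uniform speed bound $F<C_1$. The present corollary is the routine step of transferring those two estimates into a pointwise bound on the full Weingarten map, using only the elementary power-mean comparison $H_m \geq (\varepsilon_0 H)^m$ that the pinching makes available.
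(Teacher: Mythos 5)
Your proof is correct, and it reaches the same constant $C_4=C_1^{1/(m\beta)}/\varepsilon_0$ as the paper, but the key step is carried out by a different device. The paper bounds $H$ by combining Euler's theorem, $m\beta F=\dot F^i\lambda_i$, with the pinching $\lambda_i\geq\varepsilon_0 H$ and then with Lemma \ref{property Hm} iv), $\tr(\dot F)\geq m\beta F^{1-\frac{1}{m\beta}}$, to get $m\beta F\geq\varepsilon_0 H\,\tr(\dot F)\geq \varepsilon_0 H\, m\beta F^{1-\frac{1}{m\beta}}$ and hence $\varepsilon_0 H\leq F^{1/(m\beta)}$. You instead derive the same inequality $F\geq(\varepsilon_0 H)^{m\beta}$ directly from the definition of $E_m$: each of the $\binom{n}{m}$ products is at least $(\varepsilon_0 H)^m$, so $H_m\geq(\varepsilon_0 H)^m$. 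Your route is more elementary in that it bypasses the concavity-based inequality iv) of Lemma \ref{property Hm} and uses only the pinching \eqref{lambda pinching} and the speed bound \eqref{bound on the speed}; the paper's route is the one that generalizes to speeds $F$ that are not explicitly given as powers of an elementary symmetric polynomial, since it only needs homogeneity and the trace inequality. You also spell out the inequality $\lvert\mathscr{W}\rvert<H$ (strict because $n\geq 2$ and all $\lambda_i>0$ by $h$-convexity), which the paper leaves as a one-line remark; that part of your argument is fine as well.
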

\begin{proof}
The homogeneity of $F$, \eqref{lambda pinching} and the inequality
Lemma \ref{property Hm} iv) imply that

\begin{equation*}
m\beta F= \dot{F}\lambda_{i} \geq \varepsilon_{0}H \tr(\dot F) \geq
\varepsilon_{0}H  m \beta F^{1 -  \frac{1}{m\beta}}.
\end{equation*}
Thus, by \eqref{bound on the speed}
\[
H\leq \frac{1}{\varepsilon_{0}}F^{\frac{1}{m\beta}}\leq
\frac{1}{\varepsilon_{0}}C_{1}^{\frac{1}{m\beta}}=:C_{4},
\]
and so with the $h$-convexity of $M_{t}$
\[
\big|\mathscr{W}\big|<C_{4}.
\]
\end{proof}

\section{Long time existence}\label{Long time existence}
In this section, it will be shown that the solution of the initial
value problem \eqref{vpmthpcf}-\eqref{def barF} with the pinching
condition \eqref{ini pinching} exists for all positive times. As
usual, the first step is to obtain suitable bounds on the solution
on any finite time interval $[0, T)$, which guarantees the problem
\eqref{vpmthpcf}-\eqref{def barF} has a unique solution on the time
interval such that the solution converges to a smooth hypersurface
$M_{T}$ as $t\rightarrow T$. Thus, it is necessary to show that the
solution remains uniformly convex on the finite time interval which
ensures the parabolicity assumption of \eqref{vpmthpcf}-\eqref{def
barF}.

First it is to show the preserving $h$-convexity of the evolving
hypersurface $M_{t}$. Recall that Theorem \ref{pinching} and Lemma
\ref{pingching imply convex lemma} together imply the strictly
$h$-convexity of $M_{t}$. However, comparing with the initial
assumptions of Theorem \ref{main theorem}, there is a priori
assumption $\tilde{H}>0$ in Theorem \ref{pinching}. As Cabezas-Rivas
and Sinestrari mentioned in \cite{CS10}, note that for small times
such an assumption holds due to the smoothness of the flow for small
times and the initial pinching condition \eqref{ini pinching}, but
it is possible that at some positive time both $\min\tilde{K}$ and
$\min\tilde{H}$ tend to zero such that $\tilde{K}/\tilde{H}^n$
remains bounded. Thus, to exclude such a possibility, following
\cite{CS10}, it is necessary to complement Theorem \ref{pinching} by
establishing a positive lower bound on $\tilde{H}$ for the finite
time. \allowdisplaybreaks
\begin{lemma} \label{nonegative of tilde H} Under the hypotheses of Theorem \ref{pinching},
there exist $C_5,C_6>0$ depending on $n, m, \beta, a, M_0$ such that
\begin{equation} \label{tilde H_positive} \min_{M_t} \tilde{H} \geq C_5 e^{ - C_6 t } \qquad \forall
t \in [0, T). \end{equation} \end{lemma}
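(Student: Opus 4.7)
The plan is to bypass the problematic $\tr\big[\ddot F(\nabla \tilde{\mathscr{W}}, \nabla \tilde{\mathscr{W}})\big]$ term in the evolution \eqref{evturH} of $\tilde H$ by instead applying the parabolic maximum principle to $F$, whose evolution \eqref{evF} contains no second-derivative-of-$F$ term. The cost is that one must first establish a two-sided comparison between $\tilde H$ and $F - a^{m\beta}$ on the pinched hypersurface.

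The first step will be to show that there exist constants $c_1, c_2 > 0$ depending only on $n, m, \beta, a, M_0$ with
\[
c_1\, \tilde H \;\leq\; F - a^{m\beta} \;\leq\; c_2\, \tilde H \qquad \text{on } M_t.
\]
The lower bound will come from the preserved pinching \eqref{lambda pinching}: since $\lambda_i \geq a + \varepsilon_0 \tilde H$ for every $i$, the definition of $H_m$ gives $F \geq (a + \varepsilon_0 \tilde H)^{m\beta}$, and convexity of $x \mapsto x^{m\beta}$ (holding since $m\beta \geq 1$) then yields $F - a^{m\beta} \geq m\beta\, a^{m\beta - 1} \varepsilon_0 \tilde H$. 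The upper bound will follow from Lemma \ref{property Hm}(iii), which gives $F \leq (H/n)^{m\beta} = (a + \tilde H/n)^{m\beta}$, together with the mean value theorem and the uniform curvature bound \eqref{bound on the speed H}.

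Next, I would apply Hamilton's envelope trick to $F_{\min}(t) := \min_{M_t} F$. At a point $p^{\ast}_t$ realizing this minimum, $\nabla F = 0$ and $\Delta_{\dot F} F \geq 0$, so from \eqref{evF},
\[
\frac{d}{dt} F_{\min}(t) \;\geq\; (F_{\min}(t) - \bar F(t))\; \Phi(p^{\ast}_t, t),
\]
where $\Phi := \tr_{\dot F}(A\mathscr{W}) - a^2 \tr(\dot F)$. Factoring $\lambda_i^2 - a^2 = \tilde\lambda_i (\lambda_i + a)$ gives $\Phi = \sum_i \dot F^i\, \tilde\lambda_i\, (\lambda_i + a)$, which is nonnegative on the pinched set and, using $\tilde\lambda_i \leq \tilde H$ together with the established uniform bounds on $F$, $H$ and $\tr(\dot F)$, satisfies $\Phi \leq C''\, \tilde H$ for some $C'' = C''(n,m,\beta,a,M_0)$. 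Combining with the equivalence of the first step yields $\Phi(p^{\ast}_t) \leq (C''/c_1)\,(F_{\min}(t) - a^{m\beta})$, and using $0 \leq \bar F - F_{\min} \leq C_1$ from \eqref{bound on the speed barF} one deduces, both when $F_{\min} \geq \bar F$ (trivially) and when $F_{\min} < \bar F$, the differential inequality
\[
\frac{d}{dt}\big(F_{\min}(t) - a^{m\beta}\big) \;\geq\; -C_6\,\big(F_{\min}(t) - a^{m\beta}\big).
\]
Gronwall's lemma then delivers an exponential lower bound on $F_{\min} - a^{m\beta}$, which via the upper estimate of the first step transfers to the claimed inequality $\tilde H \geq C_5 e^{-C_6 t}$ with $C_5 = (c_1/c_2) \min_{M_0} \tilde H > 0$.

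The delicate step in this scheme is the upper estimate $F - a^{m\beta} \leq c_2 \tilde H$: it is precisely what forces $\Phi(p^{\ast}_t)$ to be controlled by $F_{\min}(t) - a^{m\beta}$ rather than by a mere constant, and without it the resulting differential inequality would decay only linearly, giving finite-time positivity instead of the exponential lower bound required. Once this equivalence is in hand, every other ingredient is a direct consequence of the pinching preservation and the curvature and speed bounds already established in the preceding two sections.
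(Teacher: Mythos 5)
Your argument is correct, but it follows a genuinely different route from the paper. The paper works directly with the evolution equation \eqref{evturH} for $\tilde H$: it discards the term $\tr\big[\ddot F(\nabla\tilde{\mathscr{W}},\nabla\tilde{\mathscr{W}})\big]$ as nonnegative, bounds $-\big(\bar F+(m\beta-1)F\big)|\tilde A|^2\geq -(1+m\beta)C_1\tilde H^2$ using $|\tilde A|^2\leq\tilde H^2$ and \eqref{bound on the speed}--\eqref{bound on the speed barF}, drops the last ($h$-convex, hence positive) term, and, after reducing to the case $\tilde H<1$ so that $\tilde H^2\leq\tilde H$, applies the parabolic maximum principle to $\partial_t\tilde H\geq\Delta_{\dot F}\tilde H-C_6\tilde H$. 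You instead transfer the problem to $F$ via the two-sided comparison $c_1\tilde H\leq F-a^{m\beta}\leq c_2\tilde H$ (both halves of which are justified by \eqref{tildle lambda pinching}, Lemma \ref{property Hm}(iii), \eqref{bound on the speed H} and $m\beta\geq 1$; note the lower bound uses \eqref{tildle lambda pinching} rather than \eqref{lambda pinching} as you cite), and then run Hamilton's trick and Gronwall on $F_{\min}-a^{m\beta}$ using \eqref{evF}. Your detour costs the extra comparison step, and your bound on $\tr(\dot F)$ from above should be stated as a consequence of $a\leq\lambda_i\leq C_4$, but it buys genuine robustness: the evolution \eqref{evF} of $F$ contains no $\ddot F$ term, whereas the paper's claim that $\tr[\ddot F]\geq 0$ ``by convexity of $F$'' is delicate --- for instance $F=H_m^{1/m}$ with $m\geq 2$ (allowed, since $m\beta=1$) is concave, so that term has the wrong sign and the paper's estimate would need repair there. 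Your version also makes transparent why the decay is exponential rather than merely linear, via the upper comparison $F-a^{m\beta}\leq c_2\tilde H$ forcing the reaction term to be proportional to the quantity being estimated.
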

\begin{remark}
Here the lower bound on $\tilde{H}$ is enough for our purposes, and
we will give later a stronger lower bound on $\tilde{H}$ in Lemma
\ref {tilde H LowerBound}.
\end{remark}
\begin{proof}
Since under the hypotheses of Theorem \ref{pinching}, the evolving
hypersurfaces $M_t$ remains $h$-convex for every $t \in [0,T)$. Here
it is shown that $\tilde{H}$ satisfies the lower bound \eqref{tilde
H_positive}, which in particular implies that $\min \tilde{H}$
cannot go to zero as $t \to T$. Let us go back to the evolution
equation \eqref{evturH} of $\tilde{H}$,
\begin{align*}
\partial_{t}\tilde{H}=& \Delta_{\dot{F}} \tilde{H} + \tr
\big[\ddot{F}(\nabla\tilde{\mathscr{W}},
\nabla\tilde{\mathscr{W}})\big]
- \big(\bar{F} + (m \beta -1) F\big) |\tilde{A}|^2\\
 &+2a(F-\bar{F})\tilde{H}+\tr_{\dot F}(\tilde{A}\tilde{\mathscr{W}}
 )H.
\end{align*}
The various terms appearing here are easily estimated. First,
convexity of $F$ implies
\[
\tr \big[\ddot{F}(\nabla \tilde{\mathscr{W}},
\nabla\tilde{\mathscr{W}})\big]=\tr \big[\ddot{F}(\nabla\mathscr{W},
\nabla\mathscr{W})\big]\geq 0.
\]
 The next term can be estimated using the $h$-convexity of $M_t$, \eqref{bound on the speed} and
 \eqref{bound on the speed barF},
\begin{align*}
- \big(\bar{F} + (m \beta -1) F\big) |\tilde{A}|^2 \geq -
\big(\bar{F} +m \beta F\big) \tilde{H}^2 \geq - \big(1 +m \beta
\big)C_{1} \tilde{H}^2.
\end{align*}
A similar calculation applies to the third term,
\[
2a(F-\bar{F})\tilde{H}\geq -2a C_{1} \tilde{H}.
\]
The last term here is positive by the $h$-convexity of $M_t$.
Consequently,
\begin{align*}
\partial_{t}\tilde{H}&\geq \Delta_{\dot{F}} \tilde{H}
- \big(1 +m \beta \big)C_{1} \tilde{H}^2
 -2a C_{1} \tilde{H}\\
&\geq \Delta_{\dot{F}} \tilde{H} - \max\left\{\big(1 +m \beta
\big)C_{1} ,2a C_{1}\right\}(\tilde{H}^2
 +\tilde{H}).
\end{align*}
Now, if $\tilde{H} \geq 1$, then nothing is needed to prove. Thus,
if we assume $\tilde{H} < 1$ and set $C_{6}= 2\max\left\{\big(1 +m
\beta \big)C_{1} ,2a C_{1}\right\}$, then
\begin{align*}
\partial_{t}\tilde{H}\geq \Delta_{\dot{F}} \tilde{H}
 - C_{6}\tilde{H}.
\end{align*}
The parabolic maximum principle now gives $ \min_{M_t} \tilde{H}
\geq C_5 e^{ - C_6 t }$, where $C_5$ is given by $ \min_{M_0}
\tilde{H}$.
\end{proof}

\begin{corollary}
 \label{positive tildelambda} Let $X:M^{n} \times [0,T_{\max}) \, \rightarrow {\mathbb{H}}_{\kappa}^{n+1}$ be
the solution of \eqref{vpmthpcf}-\eqref{def barF} with an initial
value which satisfies the pinching condition \eqref{ini pinching}.
Then, the hypersurfaces $M_t$ are strictly $h$-convex on any finite
time interval; that is, for any $t\in [0,T)$, with $T < +\infty$ and
$T \leq T_{\max}$, we have
$$
\inf_{M^{n} \times [0,T)} \tilde{\lambda}_i >0, \qquad \forall
i=1,\dots,n.
$$
Therefore, Theorem \ref{pinching} is valid also without the
hypothesis that $\tilde{H}>0$ for $t \in (0,T)$. The same holds for
the other results that have been obtained until here under the same
assumptions of Theorem \ref{pinching}.
\end{corollary}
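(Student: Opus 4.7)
The plan is to run a continuation argument that bootstraps the results of Theorem \ref{pinching} and Lemma \ref{nonegative of tilde H}, removing their a priori assumption that $\tilde{H}>0$. Fix $T<\min\{T_{\max},\infty\}$ and define
\[
T^{*}:=\sup\bigl\{\,t\in[0,T]\ :\ \tilde{H}>0\text{ on }M^{n}\times[0,t)\,\bigr\}.
\]
The initial pinching condition \eqref{ini pinching} forces $\tilde{H}(\cdot,0)>0$ on $M^{n}$ (since $C^{*}\tilde{H}^{n}>0$), and short-time smoothness of the flow gives $T^{*}>0$. The goal is to show $T^{*}=T$, from which the corollary follows at once.

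On the open interval $[0,T^{*})$ the a priori hypothesis $\tilde{H}>0$ of Theorem \ref{pinching} is satisfied, so that theorem applies and yields $\tilde{Q}=\tilde{K}/\tilde{H}^{n}\ge C^{*}$ there. Feeding this into Lemma \ref{pingching imply convex lemma} produces a uniform constant $\varepsilon_{0}\in(0,1/n)$ with
\[
\tilde{\lambda}_{i}(p,t)\ge \varepsilon_{0}\,\tilde{H}(p,t)>0,\qquad (p,t)\in M^{n}\times[0,T^{*}),\ i=1,\dots,n,
\]
so every $M_{t}$ with $t\in[0,T^{*})$ is strictly $h$-convex. Consequently the proof of Lemma \ref{nonegative of tilde H} goes through unchanged on $[0,T^{*})$ and gives
\[
\min_{M_{t}}\tilde{H}\ \ge\ C_{5}\,e^{-C_{6}t},\qquad t\in[0,T^{*}).
\]

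To conclude, suppose for contradiction that $T^{*}<T$. Smoothness of the solution on $[0,T_{\max})$ lets us pass to the limit $t\uparrow T^{*}$ in the previous inequality and obtain $\min_{M_{T^{*}}}\tilde{H}\ge C_{5}e^{-C_{6}T^{*}}>0$. By continuity in time, $\tilde{H}>0$ still holds on some slightly larger interval $[0,T^{*}+\eta)$, contradicting the maximality of $T^{*}$. Hence $T^{*}=T$, which gives $\tilde{\lambda}_{i}\ge \varepsilon_{0}C_{5}e^{-C_{6}T}>0$ uniformly on $M^{n}\times[0,T)$ as claimed. The only potentially delicate point is the limiting step at $t=T^{*}$: it is legitimate precisely because the flow is smooth on all of $[0,T_{\max})$ and the quantitative lower bound from Lemma \ref{nonegative of tilde H} is uniform on $[0,T^{*})$, so no pathological degeneration of $\tilde{H}$ can occur on approach to a finite interior time. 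Once $\tilde{H}>0$ has been established on every finite subinterval without any a priori assumption, Theorem \ref{pinching} (and hence Lemma \ref{nonegative of tilde H} and all subsequent consequences) automatically applies under the original hypotheses of Theorem \ref{main theorem} alone.
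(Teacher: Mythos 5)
Your continuation argument is correct and is essentially the paper's own proof: the paper simply cites the corresponding corollary of Cabezas-Rivas and Sinestrari, whose content is exactly the bootstrap you write out — define the maximal time on which $\tilde{H}>0$, apply Theorem \ref{pinching} and Lemma \ref{nonegative of tilde H} there, and use the exponential lower bound $\min_{M_t}\tilde{H}\ge C_5e^{-C_6t}$ together with continuity to rule out degeneration at any finite interior time. No gaps; your explicit treatment of the limiting step at $T^{*}$ is a welcome elaboration of what the paper leaves implicit.
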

\begin{proof}
The conclusion follows the argument as in \cite[Corollary
6.2]{CS10}, only with obvious change of $\lambda_i$ by
$\tilde{\lambda}_i = \lambda_i - a$.
\end{proof}

Preserving $h$-convexity of the evolving hypersurface leads to the
following lower bound on the term $F$.
\begin{corollary}\label{the lower bound onF}
$F \geq a ^{m\beta}$ for all $t\in [0,T)$, with $T < +\infty$ and
$T \leq T_{\max}$.
\end{corollary}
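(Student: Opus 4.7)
The plan is to invoke the strict $h$-convexity established in Corollary~\ref{positive tildelambda} and then read the bound off directly from the definition \eqref{def Hm} of $H_m$; no evolution equation, maximum principle, or further analytic machinery is needed. Corollary~\ref{positive tildelambda} asserts that on any finite time interval $[0,T)$ with $T\leq T_{\max}$ the turbulent principal curvatures satisfy $\tilde{\lambda}_i = \lambda_i - a > 0$, which is equivalent to the pointwise bound $\lambda_i \geq a$ for every $i=1,\dots,n$ at every point of $M_t$.

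Substituting this bound into
\[
H_m = \binom{n}{m}^{-1} \sum_{1\leq i_1 < \cdots < i_m \leq n} \lambda_{i_1}\cdots \lambda_{i_m},
\]
each of the $\binom{n}{m}$ summands is bounded below by $a^m$, since each of its $m$ factors is at least $a$. Hence $H_m \geq a^m$ on $M_t$ for every $t\in [0,T)$. Because $\beta \geq 1/m > 0$, monotonicity of $x \mapsto x^{\beta}$ on $[0,\infty)$ then yields
\[
F = H_m^{\beta} \geq a^{m\beta},
\]
which is the claim.

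There is no genuine obstacle in this step: the entire argument rests on the elementary monotonicity of the $m$-th elementary symmetric polynomial $E_m$ on the positive cone $\Gamma_+$ (property (ii) of Lemma~\ref{property Hm}) together with the already-verified preservation of $h$-convexity. The only thing worth emphasizing is that the lower bound $\lambda_i \geq a$ is a consequence of pinching being preserved up to the maximal time $T_{\max}$, so the constant $a^{m\beta}$ is independent of $t$ on the entire interval and will feed cleanly into the later analysis (e.g.\ when one needs a uniform positive lower bound on the eigenvalues of $\dot F$ in the long-time and regularity arguments of Section~6).
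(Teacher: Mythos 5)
Your proof is correct, but it takes a genuinely different and in fact more elementary route than the paper. The paper's proof starts from the nonnegativity of $\tr_{\dot F}(A\mathscr{W})-a^{2}\tr(\dot F)=\sum_i \dot F^i(\lambda_i^2-a^2)$ (the reaction coefficient in the evolution equation \eqref{evF} of $F$), factors $\lambda_i^2-a^2=(\lambda_i-a)(\lambda_i+a)$, bounds $\lambda_i+a\leq C_4+a$ via \eqref{bound on the speed H}, and then combines Euler's theorem $\sum_i\dot F^i\lambda_i=m\beta F$ with Lemma \ref{property Hm} iv), $\tr(\dot F)\geq m\beta F^{1-\frac{1}{m\beta}}$, to conclude that a positive multiple of $F^{1/(m\beta)}-a$ is nonnegative. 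You instead feed the pointwise bound $\lambda_i\geq a$ (from the strict $h$-convexity of Corollary \ref{positive tildelambda}, which is legitimately available at this point in the paper) directly into the definition \eqref{def Hm}: every summand of $E_m$ is at least $a^m$, so $H_m\geq a^m$ and $F=H_m^{\beta}\geq a^{m\beta}$. Both arguments rest on the same geometric input ($h$-convexity), but yours dispenses with Euler's theorem, Lemma \ref{property Hm} iv), and the curvature bounds $C_1$, $C_4$ entirely, and it even yields the strict inequality. What the paper's longer computation buys is that the intermediate quantity $\tr_{\dot F}(A\mathscr{W})-a^{2}\tr(\dot F)\geq 0$ is exactly what is needed again in the Harnack argument of Lemma \ref{tilde H LowerBound}, so the authors get two facts for the price of one; for the corollary itself your argument is complete and preferable for its simplicity.
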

\begin{proof}
In view of Lemma \ref{property
    Hm} iv), \eqref{bound on the speed} and \eqref{bound on the speed H}
    \begin{align}
        0 &\leq \left[\tr_{\dot
F} (A \mathscr{W}) - a^{2}\tr(\dot F)\right]=
        \sum_{i=1}^n F_i \left(\lambda_i^2 -a^2\right)\notag\\
        &\leq \left(C_{4}+ a\right) \sum_{i=1}^n \left( F_i \lambda_i  -  F_i a\right)
        \leq \left(C_{4}+ a\right) \left(m\beta F- m\beta F ^{1-\frac{1}{m\beta}}a\right)\notag\\
        &\leq m \beta C_{1}^{1-\frac{1}{m\beta}}\left(C_{4}+ a\right) \left(F^{\frac{1}{m\beta}}-
        a\right)\notag,
    \end{align}
    which implies
    \[
F - a^{m\beta}\geq 0.
    \]
\end{proof}

From Corollary \ref{the lower bound onF} we have the following lower
bound on $\bar{F}(t)$ by its definition.

\begin{corollary}\label{the lower bound on bar F}
$\bar{F}(t) \geq a ^{m\beta}$ for all $t\in [0,T)$, with $T <
+\infty$ and $T \leq T_{\max}$.
\end{corollary}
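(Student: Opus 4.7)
The statement is an immediate consequence of the preceding Corollary \ref{the lower bound onF}, which gives the pointwise bound $F(\lambda(\mathscr{W}(p,t))) \geq a^{m\beta}$ for every $p \in M^n$ and every $t \in [0,T)$. My plan is simply to propagate this pointwise bound through the averaging operation \eqref{ave barF} that defines $\bar{F}(t)$.

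Concretely, I would start from the definition
\[
\bar{F}(t)=\frac{\int_{M_{t}}F(\lambda(\mathscr{W}))\dif\mu_{t}}{\int_{M_{t}}\dif\mu_{t}},
\]
and note that integrating a pointwise inequality preserves it: since $F(\lambda(\mathscr{W}(p,t))) \geq a^{m\beta}$ on all of $M_t$ by Corollary \ref{the lower bound onF}, we have
\[
\int_{M_{t}}F(\lambda(\mathscr{W}))\dif\mu_{t} \;\geq\; a^{m\beta}\int_{M_{t}}\dif\mu_{t}.
\]
Dividing by the (strictly positive) area $\int_{M_{t}}\dif\mu_{t}$ yields $\bar{F}(t) \geq a^{m\beta}$, which is the desired conclusion.

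There is no real obstacle here — the whole content is monotonicity of the integral under a pointwise lower bound, combined with the validity of Corollary \ref{the lower bound onF} on the full time interval $[0,T)$ (which is guaranteed by Corollary \ref{positive tildelambda}, ensuring that $h$-convexity, and hence the chain of estimates leading to $F \geq a^{m\beta}$, is maintained up to any finite $T \leq T_{\max}$). Thus the proof is essentially a one-line verification.
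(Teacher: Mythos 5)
Your proposal is correct and is exactly the paper's argument: the paper derives the corollary directly from Corollary \ref{the lower bound onF} and the definition \eqref{ave barF} of $\bar{F}(t)$ as an average of $F$ over $M_t$. Nothing further is needed.
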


Since our flow is different from the volume preserving mean
curvature flow, we cannot follow the induction argument of Hamilton
as in \cite{CR-M07,Ham82,Hui84,Hui86,Hui87,McC03,McC04}, etc, to
obtain uniform estimates on all orders of curvature derivatives and
hence smoothness and convergence of the $M_{t}$ for the flow
\eqref{vpmthpcf}-\eqref{def barF}. Instead we use a more PDE
theoretic approach, following an argumentation similar to the one in
\cite{CS10}.

Before proceeding further, we adopt a local graph representation for
a $h$-convex hypersurface as in \cite{CR-M07}. For each fixed
$t_{0}$, let $p_{t_{0}}$ be a center of an inball of
$\Omega_{t_{0}}$, and $S^n$ the unit sphere in
$T_{p_{t_{0}}}{\mathbb{H}}_{\kappa}^{n+1}$. For each $t$, since
$M_{t}$ is $h$-convex, there exists a function $r:S^n \rightarrow
\mathbb{R}^+$ such that
 $M_{t}$ can be written as a map: $S^n \rightarrow {\mathbb{H}}_{\kappa}^{n+1}$,
 again denoted by $X_t$, satisfying
\begin{equation}
 \label{param1}
 X_t(x) = \exp_{p_{t_0}} r(t, u(t,x)) u(t,x),
\end{equation}
where $u(t,x) =
\ds\frac{\exp_{p_{t_0}}^{-1}X_t(x)}{r_{p_{t_0}}(X_t(x))}  \text{ and
}  r(t, u(t,x)) = r_{p_{t_0}}(X_t(x))$. At least, from Lemma
\ref{t0+smalltime} there exists some constant $\tau=\tau(a, n, m,
\beta, V_0)>0$ such that for $t \in [t_0, \min\{t_0+\tau, T\})$
(near $t_0$), $p_{t_0} \in \Omega_t$, and so the map $u_t: M^{n}
\rightarrow S^n\subset T_{p_{t_0}} {\mathbb{H}}_{\kappa}^{n+1}$
defined by $u_t(x)=u(t,x)$ is a diffeomorphism. On the other hand,
the map
\begin{equation}\label{another parametrization}
\breve X_t(x) = \exp_{p_{t_0}} r(t, u(t_0,x))\ u(t_0,x)
\end{equation} is another parametrization of $M_t$.
Incorporating a tangential diffeomorphism $\chi_{t} =
u^{-1}_{t_{0}}\circ u_t: M^{n} \rightarrow M^{n}$ into the flow
\eqref{vpmthpcf}-\eqref{def barF} to ensure that this
parametrization is preserved; that is, if $X_t$ is  a solution of
\eqref{vpmthpcf}-\eqref{def barF}, $\breve X_t$ satisfies the
equation
\begin{equation}
\label{evol.param} \left\langle {\partial_{t}\breve X_t} , \nu_t
\right\rangle = \bar{F}_t - F_t.
\end{equation}
 $\breve X_t$ can be
considered as a map from $S^n$ into ${\mathbb{H}}_{\kappa}^{n+1}$ by
using the diffeomorphism $u^{-1}_{t_{0}}$, i.e.,
\begin{equation}
\label{Xtparam on sn} \breve X_t(u) = \exp_{p_{t_0}} r(t,u)\ u \quad
\text { for every } u\in S^n,
\end{equation}
where $r(u)= r_{p_{t_0}}(\breve X_t(u))$ is a function on $S^n$. For
any local orthonormal frame $\{e_i\}$ of $S^n$,  let $D$ be the
Levi-Civita connection on $S^{n}$, a basis $\{\breve{e}_i \}$ of the
tangent space to $M_t$ is given  by
\begin{equation}\label{a basis breve ei}
\breve{e}_i = \breve X_{t\ast} e_i = D_i(r)
\partial_{r_{p_{t_0}}} + \s_{\kappa}(r) \tau_s e_i, \qquad 1\leq i
\leq n,
\end{equation}
where $\tau_s$ denotes the parallel transport along the geodesic
starting from $p_{t_0}$ in the direction of $u$, and until
$\exp_{p_{t_0}} r(u) u$. As in \cite{CR-M07}, by using Lemma
\ref{$h$-convex} and \eqref{bound rho} we deduce that
\begin{equation*}
 \left|\breve X_{t*} e_i\right|  <  \frac{\s_{\kappa}(\psi(V_0)+ a \ln 2)}{a\
 \ta_{\kappa}(\xi(\psi(V_0)))}.
\end{equation*}
Furthermore, \eqref{a basis breve ei} implies
\[
\big|e_i(r)\big| \leq \left|\breve X_{t*} e_i\right|.
\]
Therefore, both the first derivatives of $\breve X_t$ and $r$ are
bounded independently of $t$. The outward unit normal vector of
$M_{\hat{t}}$ can be expressed as
\begin{equation}\label{param outward unit normal}
\nu=\frac{1}{\left|\xi\right|}\Big(\s_{\kappa}(r)\partial_{r_{p}}-\sum_{i=1}^{n}D_{i}r{e}_{i}\Big)
\end{equation}
with
\begin{equation*}
\big|\xi\big|=\sqrt{\s^{2}_{\kappa}(r)+\left|Dr\right|^{2}}.
\end{equation*}
After a standard computation, the second fundamental form of $M_{t}$
can be expressed as
\begin{equation}\label{param h}
h_{ij}=-\frac{1}{\big|\xi\big|}\Big(\s_{\kappa}(r)D_{j}D_{i}r
-\s^{2}_{\kappa}(r)\cc_{\kappa}(r)\sigma_{ij}
-2\cc_{\kappa}(r)D_{i}rD_{j}r\Big),
\end{equation}
and the  metric $g_{ij}$ is
\begin{equation}\label{param g}
g_{ij}=D_{i}rD_{j}r+\s^{2}_{\kappa}(r)\sigma_{ij},
\end{equation}
where $\sigma_{ij}$ is the canonical metric of $S^n$. From this, the
inverse metric can be expressed as
\begin{equation}\label{param -g}
g^{ij}=\frac{1}{\s^{2}_{\kappa}(r)}\Big(\sigma^{ij}-\frac{1}{\big|\xi\big|^{2}}D^{i}rD^{j}r\Big),
\end{equation}
where $(\sigma^{ij})$=$(\sigma_{ij})^{-1}$ and
$D^{i}r=\sigma^{ij}D_{j}r$. Then equations \eqref{param h} and
\eqref{param -g} imply that
\begin{equation}\label{param weingarten}
h^{i}_{j}=-\frac{1}{\big|\xi\big|\s_{\kappa}(r)}
\Bigg[\frac{1}{\s_{\kappa}(r)}\Big(D_{j}D^{i}r-
\frac{D_{j}D_{l}rD^{i}rD^{l}r}{\big|\xi\big|^2}\Big)
-\cc_{\kappa}(r)\Big(\delta^{i}_{j}+\frac{D^{i}rD_{j}r}{\big|\xi\big|^{2}}\Big)\Bigg]
\end{equation}
and
\begin{equation}\label{param H}
H=-\frac{1}{\big|\xi\big|\s_{\kappa}(r)}\Big(\Delta_{\mathbb{S}}r
-\frac{1}{\big|\xi\big|^{2}}\nabla^{2}_{\mathbb{S}}r(Dr,Dr)\Big)
+\frac{\cc_{\kappa}(r)}{\big|\xi\big|}\Big(n+\frac{|Dr|^{2}}{\big|\xi\big|^{2}}\Big).
\end{equation}
Using \eqref{param g} and \eqref{param -g} the Christoffel symbols
have the expression:
\begin{align}\label{param christofel}
\Gamma^{k}_{ij}=\frac{1}{\s^{2}_{\kappa}(r)}\bigg[D_{i}D_{j}rD_{l}r+\s_{\kappa}(r)\cc_{\kappa}(r)
\big(D_{i}r\sigma_{lj}+D_{j}r\sigma_{il}-D_{l}r\sigma_{ij}\big)\bigg]\Big(\sigma^{kl}-\frac{1}{\big|\xi\big|^{2}}D^{k}rD^{l}r\Big).
\end{align}

\begin{lemma}
\label{UniformlyParabolic} Let $\phi:=H^{1/m}_{m}$  and
$$
\hat{\Gamma} =\{ \lambda=(\lambda_1,\dots,\lambda_n) ~:~ M_0 \leq
H(\lambda) \leq M_1, \quad \min_{1 \leq i \leq n}\lambda_i \geq
\varepsilon H(\lambda) \},
$$
which is a compact symmetric subset of the positive cone $\Gamma_+$.
There exists constants $ m_2 >m_1> 0$ depending only on $n, M_0$
such that for every $t \in [0, T)$ and $x \in M_t$, the following
inequality
$$
m_1 \leq \frac{\partial \phi}{\partial \lambda_i }(\lambda) \leq
m_2, \qquad i=1,\dots,n, \ \lambda \in \hat{\Gamma},
$$
holds as long as the hypersurfaces $M_t$ are strictly $h$-convex.
\end{lemma}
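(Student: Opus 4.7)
The plan is to reduce the statement to a pure continuity/compactness argument on the cone $\Gamma_+\subset\mathbb{R}^n$, since the hard analytic work (curvature bounds, preservation of pinching, strict $h$-convexity) has already been done in Sections 4--6.

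First I would record the elementary properties of $\phi=H_m^{1/m}$ on the positive cone. It is smooth, symmetric, and homogeneous of degree one, so each derivative
\[
\frac{\partial\phi}{\partial\lambda_i}(\lambda)
=\frac{1}{m}\,H_m(\lambda)^{\frac{1}{m}-1}\,
\frac{\partial H_m}{\partial\lambda_i}(\lambda)
\]
is a continuous, homogeneous-of-degree-$0$ function on $\Gamma_+$. By Lemma \ref{property Hm} ii) we have $\partial H_m/\partial\lambda_i>0$ throughout $\Gamma_+$, and of course $H_m(\lambda)>0$ there, so $\partial\phi/\partial\lambda_i$ is strictly positive on $\Gamma_+$.

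Next I would verify that the set $\hat{\Gamma}$ is a compact subset of the open cone $\Gamma_+$. It is clearly closed, being defined by non-strict inequalities involving the continuous functions $H$ and $\min_i\lambda_i$. It is bounded because $0<\lambda_i<\sum_j\lambda_j=H\leq M_1$ for every $i$. Finally, $\hat{\Gamma}$ stays uniformly away from $\partial\Gamma_+$: for each $i$ one has
\[
\lambda_i\geq\varepsilon H\geq\varepsilon M_0>0.
\]
Thus $\hat{\Gamma}\Subset\Gamma_+$, and by the continuity and strict positivity of $\partial\phi/\partial\lambda_i$ on $\Gamma_+$, there exist constants $0<m_1<m_2$ (depending only on $n$, $m$, $M_0$, $M_1$, $\varepsilon$) such that $m_1\leq\partial\phi/\partial\lambda_i\leq m_2$ throughout $\hat{\Gamma}$.

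The remaining point --- and, to the extent there is any obstacle, this is it --- is to check that for every $(x,t)\in M^n\times[0,T)$ with $M_t$ strictly $h$-convex, the eigenvalue vector $\lambda(\mathscr{W}(x,t))$ actually lies in $\hat{\Gamma}$ for a suitable choice of $M_0$, $M_1$, $\varepsilon$. This follows directly from the estimates already established: the upper bound $H\leq C_4$ from \eqref{bound on the speed H} provides $M_1$; the strict $h$-convexity $\lambda_i\geq a$ gives $H\geq na=:M_0$; and the pinching estimate \eqref{lambda pinching}, namely $\lambda_i\geq\varepsilon_0 H$ with the universal $\varepsilon_0\in(0,1/n)$ from Section \ref{Preserving pingching}, supplies the pinching parameter $\varepsilon=\varepsilon_0$. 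Combining these with the compactness argument above yields the required two-sided bound on $\partial\phi/\partial\lambda_i$ along the flow, which is precisely the uniform parabolicity of the associated PDE.
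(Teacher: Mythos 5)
Your proposal is correct and follows essentially the same route as the paper, whose entire proof is the one-line observation that $\partial\phi/\partial\lambda_i$ is positive and continuous on the compact set $\hat{\Gamma}$. You merely fill in the details the paper leaves implicit (that $\hat{\Gamma}$ is a compact subset of $\Gamma_+$, and that the estimates \eqref{bound on the speed H} and \eqref{lambda pinching} place $\lambda(\mathscr{W}(x,t))$ in $\hat{\Gamma}$), and you correctly note in passing that the constants really depend on $n$, $m$, $M_0$, $M_1$, $\varepsilon$ rather than only on $n$, $M_0$ as the statement asserts.
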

\begin{proof}
Since $\frac{\partial \phi}{\partial \lambda_i }(\lambda) > 0$ for
any $\lambda \in \hat{\Gamma}$, and $\hat{\Gamma}$ is compact, there
exist $m_2>m_1>0$ such that $$ m_1 \leq \frac{\partial
\phi}{\partial \lambda_i }(\lambda) \leq m_2, \qquad i=1,\dots,n, \
\lambda \in \hat{\Gamma}.
$$
\end{proof}

\begin{lemma}
 \label{appl_caff}
Let $M \subset {\mathbb{H}}_{\kappa}^{n+1}$ be an embedded
hypersurface satisfying at every point $D_3<H<D_4$, $\lambda_1 \geq
\varepsilon H$ for given positive constants $D_3,D_4,\varepsilon$.
Given any $p \in M$, let $r$ be a local graph representation of $M$
over a unit ball $S^{n} \subset T_pM$. Then $r$ satisfies
$$||r||_{C^{2,\alpha}(S^{n})} \leq C_{7}(1+||F||_{C^{\alpha}(S^{n})})$$
for some $C_{7}>0$ and $0<\alpha<1$ depending only on
$n,D_3,D_4,\varepsilon$ and the parameters $\beta,m$ in the
definition of $F$.
\end{lemma}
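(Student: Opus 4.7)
The plan is to cast the equation $F(\lambda(\mathscr{W})) = F$ along $M$ as a fully nonlinear concave elliptic PDE for the graph function $r$ and then invoke Caffarelli's Theorem \ref{Caffarelli}. First, I would parametrize a neighborhood of $p \in M$ as a normal graph $r$ over a ball in $T_pM$. The upper bound $H < D_4$ together with the pinching $\lambda_1 \geq \varepsilon H > 0$ forces all $\lambda_i$ into a fixed compact subset of the positive cone, so $|\mathscr{W}|$ is bounded by a constant depending only on $n, D_4, \varepsilon$; consequently $M$ admits a local graph representation over a ball of fixed universal radius, which after a choice of units we may take to be the unit ball. In these coordinates, computations analogous to \eqref{param g}--\eqref{param weingarten} express each component $h^i_j$ as an affine function of $D^2 r$ with coefficients smooth in $(r, Dr, x)$ that involve only the background hyperbolic metric.

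Next, set $\phi := H_m^{1/m} = F^{1/(m\beta)}$. By Lemma \ref{property Hm} i), $\phi$ is concave on the positive cone, and since $D^2 r \mapsto \mathscr{W}(r)$ is affine for fixed $(r, Dr, x)$, the composite operator $G(D^2r, Dr, r, x) := \phi(\lambda(\mathscr{W}(r)))$ is concave in its $D^2 r$-argument. Uniform ellipticity of $G$ in the sense of \eqref{elliptic condition} is supplied by Lemma \ref{UniformlyParabolic}: on the compact set $\hat{\Gamma}$ determined by the curvature and pinching bounds, $\partial\phi/\partial\lambda_i$ is trapped between two positive constants depending only on $n, D_3, D_4, \varepsilon$, and these translate into ellipticity constants for $G$ with the same dependence. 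The Hölder hypothesis on $G(A,x) - G(A,y)$ required by Theorem \ref{Caffarelli} follows from the real-analyticity of the hyperbolic metric in normal coordinates, together with a preliminary $C^{1,\alpha}$ bound on $r$ obtained by applying Theorem \ref{krylov-safonov} to the linearization of the equation, which allows one to treat $(r, Dr)$ as known Hölder data.

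Granted these hypotheses, Theorem \ref{Caffarelli} yields
\[
\|r\|_{C^{2,\alpha}(B_{1/2})} \leq C\bigl(\|r\|_{C(S^n)} + \|\phi\|_{C^\alpha(S^n)} + 1\bigr),
\]
on each ball in a finite cover of the unit domain, with $C$ depending only on $n, D_3, D_4, \varepsilon, m, \beta$. The $C^0$ norm of $r$ is a purely geometric quantity bounded by the scale of the graph representation and is absorbed into $C_7$. To return from $\phi$ to $F$, note that $F \geq a^{m\beta} > 0$ by Corollary \ref{the lower bound onF} and $F \leq (D_4/n)^{m\beta}$ by Lemma \ref{property Hm} iii); hence $f \mapsto f^{1/(m\beta)}$ is smooth on the range of $F$, and therefore $\|\phi\|_{C^\alpha} \leq C' \|F\|_{C^\alpha}$ for a constant $C'$ with the same dependence. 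Combining the two inequalities produces the asserted estimate.

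The principal technical obstacle is the verification of the spatial Hölder hypothesis of Theorem \ref{Caffarelli}: it requires a bootstrap from a $C^{1,\alpha}$ bound on $r$ so that the composite dependence of $G$ on $x$ through $(r(x), Dr(x))$, and the explicit dependence on the ambient metric at $x$, are jointly controlled in $C^\alpha$. Everything else reduces to clean algebra in the graph formulas together with the concavity and compactness inputs provided by Lemmas \ref{property Hm} and \ref{UniformlyParabolic}.
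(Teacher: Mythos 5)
Your overall strategy is the right one, but there is a genuine gap at the point where you invoke Theorem \ref{Caffarelli}. You apply it to the operator $G(D^2r,\dots)=\phi(\lambda(\mathscr{W}(r)))$ with $\phi=H_m^{1/m}$ and claim that the ellipticity hypothesis \eqref{elliptic condition} is ``supplied by Lemma \ref{UniformlyParabolic}.'' But \eqref{elliptic condition} must hold for \emph{every} pair $A,B\in\mathcal S$ with $B\geq 0$, whereas Lemma \ref{UniformlyParabolic} only controls $\partial\phi/\partial\lambda_i$ on the compact set $\hat\Gamma$ inside the positive cone; outside that cone $H_m^{1/m}$ is not even well defined, let alone uniformly elliptic. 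This is exactly the issue the paper's proof addresses by replacing $\phi$ with its Bellman extension $\bar\phi(\bar\lambda)=\inf_{\lambda\in\hat\Gamma}D\phi(\lambda)\bar\lambda$: being an infimum of linear functions it is concave and one-homogeneous on all of $\mathbb{R}^n$, it satisfies $m_1|\bar\eta|\leq\bar\phi(\bar\lambda+\bar\eta)-\bar\phi(\bar\lambda)\leq\sqrt{n}\,m_2|\bar\eta|$ globally, and it coincides with $\phi$ on $\hat\Gamma$, where the actual curvatures of $M$ live by hypothesis. Only the extended operator $\bar\Phi$ meets the hypotheses of Theorem \ref{Caffarelli}; without some such extension your application of the theorem is not justified. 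The final identification $f=\bar\phi(\lambda(u))=F^{1/(m\beta)}$ then goes through because the solution's eigenvalues stay in $\hat\Gamma$, and your closing step converting $\|\phi\|_{C^\alpha}$ to $\|F\|_{C^\alpha}$ via the two-sided bounds on $F$ matches the paper.

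A secondary point: your proposed bootstrap through Theorem \ref{krylov-safonov} to obtain a preliminary $C^{1,\alpha}$ bound on $r$ is both unnecessary and misplaced here --- that theorem is parabolic, while Lemma \ref{appl_caff} concerns a single static hypersurface. The paper instead observes that the hypotheses $D_3<H<D_4$ and $\lambda_1\geq\varepsilon H$ already bound $\|r\|_{C^2}$ in terms of $D_4$ (the Hessian of $r$ is controlled by the second fundamental form via \eqref{param weingarten}), so the $x$-dependence of $G(u,A)=\bar\Phi(Dr(u),A)$ through $Dr(u)$ is automatically Lipschitz, hence $C^\alpha$, which is all that the Hölder hypothesis of Theorem \ref{Caffarelli} requires.
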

\begin{proof}
In exactly the same way as \cite[Lemma 6.3]{CS10}, let us set
$\phi:=H_m^{1/m}$. Recalling Lemma \ref{property Hm} i), $\phi$ is
concave in $\Gamma_+$. Then in this case, the Bellman's extension
$\bar{\phi}$ of $\phi$ takes the form
\[
\bar{\phi}(\bar{\lambda}):= \inf_{\lambda\in
\hat{\Gamma}}\left[\phi\left(\lambda\right)+
D\phi\left(\lambda\right)\left(\bar{\lambda}-\lambda\right) \right]
\]
for any $\bar{\lambda} \in \Gamma_+$. Notice that $\phi$ is
homogeneous of degree one, the extension simplifies to
\[
\bar{\phi}(\bar{\lambda})=\inf_{\lambda\in \hat{\Gamma}}
D\phi\left(\lambda\right)\bar{\lambda}.
\]
The Bellman extension preserves concavity, by definition, and
homogeneity, since it is the infimum of linear functions.
Importantly, $\bar{\phi}$  coincides with $\phi$ on $ \hat{\Gamma}$
by homogeneity of $\phi$. Furthermore, using the definition fo
$\bar{\phi}$ and Lemma \ref{UniformlyParabolic}, $\bar{\phi}$ is
uniformly elliptic, that
\[
m_1 |\bar{\eta}|\leq \bar{\phi}(\bar{\lambda}+\bar{\eta}) -
\bar{\phi}(\bar{\lambda})\leq \sqrt{n}m_2 |\bar{\eta}|, \quad
\mbox{for all}\; \bar{\lambda},\bar{\eta} \in {\mathbb{R}}^{n},\;
\bar{\eta}\geq 0.
\]

Now The hypotheses imply that the principal curvatures of $M$ at
every point are contained between two fixed positive constants. So
$M$ can be written as a local graph representation $r$ over a unit
ball $S^{n} \subset T_pM$ at a given point $p \in M$ with
$\|r\|_{C^2}$ bounded in terms of $D_4$. Let us consider the
function $\bar{\phi}(\bar{\lambda}(u))$, where $\bar{\lambda}(u)$
are the principal curvatures of $M$ at the point $(u,r(u))$. Since
$\bar{\lambda}(u)$ are the eigenvalues of a matrix depending on
$Dr,D^2r$ in view of \eqref{param weingarten},
$\bar{\phi}(\bar{\lambda}(u))$ can be expressed as
$\bar{\Phi}(Dr(u),D^2r(u))$ for a suitable function $ \bar{\Phi} =
\bar{\Phi}(u,A)$, with $(u,A) \in  S^n\times \mathcal {S}$,
$\mathcal {S}$ being the set of symmetric $n \times n$ matrices. The
dependence of $\bar{\Phi}$ on $A$ is related to the dependence of
$\bar{\Phi}$ on $\bar{\lambda}$. In fact, it is well known that the
concavity of $\bar{\Phi}$ with respect to $\bar{\lambda}$ implies
the concavity of $\bar{\Phi}$ with respect to $D^2 r$, and that
ellipticity on $\bar{\phi}$ implies the ellipticity condition
(\ref{elliptic condition}) for $\bar \Phi$. In addition, $\bar \Phi$
is homogeneous of degree one with respect to $D^2r$. Furthermore,
set $G(u,A):=\bar{\Phi}(Dr(u),A)$ and $ f(u)=
\bar{\Phi}(Dr(u),D^2r(u))$. The above argument implies that the
elliptic equation for $r$
\[
G(D^2r(u),u)=f(u), \qquad u \in S^{n}, \; r\in C^2(S^{n})
\]
satisfies the conditions of Theorem \ref{Caffarelli}. This theorem
gives that there exists $\alpha \in (0,1)$ such that
$$||r||_{C^{2,\alpha}(S^{n})} \leq C(1+||f||_{C^{\alpha}(S^{n})})$$
where $C$ depends on $n,D_3,D_4,\varepsilon,\beta$.

Our assumptions say that $\bar{\lambda}(u)$ belongs for every $u$ to
the set $\hat{\Gamma}$ where $\bar \phi$ and $\bar \Phi$ coincide.
Thus, $f$ coincides with $H_m^{1/m}=F^{1/\beta m}$ at
$\bar{\lambda}(u)$. Observe that our assumptions; that is the
uniform bounds on the curvatures both from below and above imply
that $F$ is contained between two positive values depending only on
$n,D_3,D_4,\varepsilon,\beta$. Therefore $||F^{1/\beta
m}||_{C^\alpha}$ is estimated by $||F||_{C^\alpha}$ times a constant
depending only on these quantities.  we obtain Lemma
\ref{appl_caff}.
\end{proof}

\begin{theorem}
 \label{Calpha estimate}
 Let $M_t$ be a solution of \eqref{vpmthpcf}-\eqref{def barF},
defined on any finite time interval $[0,T)$, with initial condition
satisfying \eqref{ini pinching}. Then, for any $0<t_{0}<T$, $\alpha
\in (0,1)$ and every integer $k \geq 0$, there exist constant
$C_{8}$, depending on $n,m,\beta,a,M_{0}$ and
$C_{9}=C_{9}(n,m,\beta,a,M_{0},k)$ such that
\begin{align}
\label{F estimate C alpha}\|F\|_{C^{\alpha}(M^{n} \times (t_{0},T])} &\leq C_{8},\\
\label{r estimate C alpha}\|r\|_{C^{k}(M^{n} \times (t_{0},T])}
&\leq C_{9}.
\end{align}
\end{theorem}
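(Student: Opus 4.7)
The plan is to run the argument in three stages: obtain a parabolic $C^\alpha$ estimate on $F$ via Krylov--Safonov, promote this to a spatial $C^{2,\alpha}$ estimate on $r$ via Caffarelli (Lemma~\ref{appl_caff}), and then bootstrap all the way up to $C^k$ by standard Schauder theory applied to a differentiated version of the fully nonlinear equation satisfied by $r$.

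\textbf{Stage 1 ($C^\alpha$ estimate on $F$).} I would view the evolution equation \eqref{evF}
$$\partial_{t}F = \Delta_{\dot F} F + (F - \bar{F})\bigl[\tr_{\dot F}(A\mathscr{W}) - a^{2}\tr(\dot F)\bigr]$$
as a linear parabolic equation in the unknown $F$, treating $\dot F^{ij}$ as the matrix of leading coefficients and $\bar F - F$, $\tr_{\dot F}(A\mathscr{W})$, $\tr(\dot F)$ as lower-order terms. On the finite interval $[0,T]$ the strict $h$-convexity (Corollary~\ref{positive tildelambda}) gives, together with the curvature upper bound \eqref{bound on the speed H}, a confinement of $\lambda=(\lambda_1,\dots,\lambda_n)$ to a fixed compact subset of the positive cone $\Gamma_+$. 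Since each $\dot F^i > 0$ on $\Gamma_+$, continuity and compactness force $\dot F^{ij}$ to be uniformly elliptic there, while the bounds \eqref{bound on the speed}, \eqref{bound on the speed barF} and Corollary~\ref{the lower bound onF} control the lower order coefficients. Working in local coordinates on $S^n$ through the graph representation \eqref{Xtparam on sn}, Theorem~\ref{krylov-safonov} then yields $\|F\|_{C^\alpha(M^n\times(t_0,T])}\leq C_8$ for some $\alpha\in(0,1)$.

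\textbf{Stage 2 ($C^{2,\alpha}$ estimate on $r$).} With the $C^\alpha$ bound on $F$ in hand, for each $t\in[t_0,T]$ and each choice of base point $p_{t_0}$, Lemma~\ref{appl_caff} gives
$$\|r(t,\cdot)\|_{C^{2,\alpha}(S^n)}\leq C_7\bigl(1+\|F(t,\cdot)\|_{C^\alpha(S^n)}\bigr)\leq C_7(1+C_8).$$
Time regularity of $r$ then comes from the parametrization equation \eqref{evol.param} and the expression \eqref{param outward unit normal} for $\nu$, which yields $\partial_t r$ as an algebraic function of $r$, $Dr$ and $\bar F - F$; combined with the $C^\alpha$ bound on $F$ and the spatial $C^{2,\alpha}$ bound on $r$, this supplies parabolic H\"older control of $r$ up to order two.

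\textbf{Stage 3 (Bootstrap to $C^k$).} Inserting \eqref{param weingarten} into \eqref{vpmthpcf}--\eqref{def barF} exhibits $r$ as a solution of a fully nonlinear parabolic PDE whose coefficients are smooth functions of $(r,Dr,D^2r)$ on the domain already controlled. Differentiating this PDE in a spatial direction produces a \emph{linear} parabolic equation for $D r$ whose coefficients are now bounded in the parabolic $C^\alpha$ norm. Standard parabolic Schauder estimates upgrade $r$ to $C^{3,\alpha}$; iterating this differentiation-plus-Schauder step raises the regularity one derivative at a time, giving \eqref{r estimate C alpha} for every $k$.

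\textbf{Main obstacle.} The entire bootstrap rests on the uniform ellipticity of $\dot F^{ij}$, which in turn requires keeping $\lambda$ away from $\partial\Gamma_+$ on the whole finite interval $[0,T]$. This is where Theorem~\ref{pinching} (giving \eqref{lambda pinching}) and the finite-time lower bound $\tilde H\geq C_5 e^{-C_6 T}>0$ of Lemma~\ref{nonegative of tilde H} are indispensable; without them the coefficients in Stage~1 could degenerate and neither Krylov--Safonov nor Caffarelli would apply. A secondary technical point is that the base point $p_{t_0}$ used to anchor the graph chart only guarantees a valid parametrization for a short time (Lemma~\ref{t0+smalltime}), so the estimates must be stitched together by overlapping a finite covering of $[t_0,T]$ by such intervals.
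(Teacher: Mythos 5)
Your proposal follows essentially the same route as the paper: Krylov--Safonov applied to the linearized evolution equation \eqref{evF} in graph coordinates for the $C^{\alpha}$ bound on $F$, Lemma \ref{appl_caff} for the fixed-time spatial $C^{2,\alpha}$ bound on $r$, then an upgrade to space-time regularity followed by a standard bootstrap, with the same reliance on the pinching/$h$-convexity for uniform ellipticity and on overlapping graph charts in time. The only place you are slightly more casual than the paper is the passage from the fixed-time spatial $C^{2,\alpha}$ estimate to joint space-time $C^{2,\alpha}$ regularity, for which the paper invokes the interpolation arguments of Andrews and Tsai rather than deducing it directly from the formula for $\partial_t r$; this is a matter of citation rather than a gap.
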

\begin{proof}
The cases $k=0,1$ of \eqref{r estimate C alpha} follow the fact that
$r$ and its first order derivatives are uniformly bounded. Then, by
\eqref{bound on the speed H}, \eqref{param weingarten} implies that
its second order derivatives are uniformly bounded. For each fixed
$t_{0}\in [0,T)$, $M_{t}$ can be locally reparameterized as graphs
over the unit sphere $\mathbb{S}^{n}$ with center $p_{t_{0}}$ in
$T_{p_{t_{0}}}{\mathbb{H}}_{\kappa}^{n+1}$ as \eqref{Xtparam on sn}.
Then, from  \eqref{evol.param}, \eqref{param outward unit
 normal} and \eqref{Xtparam on sn}, a short computation yields that the distance
 function $r$
on $\mathbb{S}^{n}$ satisfies the following parabolic PDE
\begin{align}\label{ev.param.r}
\partial_{t}r=\s^{-1}_{\kappa}(r)\big|\xi\big|(\bar{F}_t - F_t),
\end{align}
where the length of the outward normal vector $\big|\xi\big|$ is
given by the expression \eqref{param outward unit
 normal}. The function $F=H_{m}^{\beta}$ in the coordinate system
 under consideration is a function of $D^{2}r$ and $Dr$.
The RHS of \eqref{ev.param.r} is a fully nonlinear operator,
furthermore, observe that \eqref{ev.param.r} can be rewritten as
\begin{align}\label{ev.param.r2}
\partial_{t}r &=-\s^{-1}_{\kappa}(r)\big|\xi\big|H_{m}^{\beta}+\s^{-1}_{\kappa}(r)\big|\xi\big|\bar{F}_t\\
&= \s^{-1}_{\kappa}(r)H_{m}^{\beta -
1/m}\left(\left(-\big|\xi\big|\right)^{m}H_{m}\right)^{1/m}+\s^{-1}_{\kappa}(r)\big|\xi\big|\bar{F}_t\nonumber.
\end{align}
Since $H_{m}^{\beta - 1/m}$ is bigger than $a^{m\beta -1}$ in view
of Corollary \ref{the lower bound onF} and bounded above by
$C_{2}^{\beta-\frac{1}{m}}$ coming from \eqref{bound on the speed
Hm}, and $r$ can be bounded independently of $t$, this implies that
$\s^{-1}_{\kappa}(r)H_{m}^{\beta - 1/m}$ are also uniformly H\"older
 continuous functions. Then, from \eqref{param weingarten} this ensures that \eqref{ev.param.r}
is a linear, strictly parabolic partial differential equation.
However, the higher order regularity does not follow by the general
theory of Krylov and Safonov \cite{KS} because the operator is not a
concave function of $D^2 r$. Here, we use instead the arguments in
\cite{CS10} who followed the procedure of \cite{And04,Tsai,McC05},
which consists of first proving regularity in space at a fixed time
and then regularity in time.

The first step is to derive $C^{\alpha}$-estimate \eqref{F estimate
C alpha} of $F$. In the coordinate system
 under consideration, \eqref{evF} can be rewritten as
 \begin{align}\label{locally ev.F}
 \partial_{t}F = a^{ij} D_i D_j F  + b^i D_i F + e \, F,
\qquad (u,t) \in S^n \times J,
 \end{align}
 where $J=[t_0, \min\{t_0+\tau, T\})$, and the the coefficients are given by
$$a^{ij} = \beta \, H_m^{\beta - 1}\, c^{ij},
\quad b^i = \beta\, H_m^{\beta - 1} \, c^{lj} \Gamma_{lj}^i $$ and
$$ e = \beta\,(F - \bar{F})\,H_m^{-1}\left(\tr_c
(A\mathscr{W})-a^{2}\tr(c)\right).$$ Here
$$c=\{c^{ij}\}=\left\{\frac{\partial
H_m}{\partial h_{i}^{k}}g^{kj}\right\}.$$ Since $a^{ij}$, $b^i$ and
$e \, F$ are uniformly bounded on the curvatures both from above  on
any finite time interval in view of \eqref{bound on the speed H},
\eqref{param weingarten}-\eqref{param christofel} and from below by
$h$-convexity of $M_{t}$, the equation \eqref{locally ev.F} is
uniformly parabolic with uniformly bounded coefficients. Then
applying Theorem \ref{krylov-safonov} to \eqref{locally ev.F} gives
that for any $0 <r' < 1$ and $J'= J-t_0$ there exist some constants
$D_{6}>0$ and $\alpha \in (0,1)$ depending on $n, m, \beta, a,
M_{0}$ such that
\begin{equation}\label{Calphaestimate F}
 \|F\|_{C^\alpha(B_{r'} \times J')} \leq D_{5}
\|F\|_{C^0(M^{n} \times [0,T))} \leq  D_{6}.
\end{equation}
Therefore, covering $M_{t}$ by finite many graphs on balls of radius
$r'$ can give \eqref{F estimate C alpha}.

Furthermore, for any fixed time $t\in [t_{0}, T)$, applying \eqref{F
estimate C alpha} to Lemma \ref{appl_caff} on $M_{t}$ implies that
$$
||r||_{C^{2,\alpha}(M_{t})} \leq D_{7}:=D_{7}(n,m,\beta,a,M_{0},k).
$$
From this estimate on $r(\cdot,t)$ for any fixed $t$, following the
procedrre of  \cite[\S 3.3, \S 3.4]{And04} or \cite[Theorem
2.4]{Tsai} to equation \eqref{ev.param.r}, a $C^{2,\alpha}$ estimate
for $r$ with respect to both space and time can be obtained. Once
$C^{2,\alpha}$ regularity is established, standard parabolic theory
yields uniform $C^k$ estimates \eqref{r estimate C alpha} for any
integer $k > 2$, which implies uniform $C^k$ estimates \eqref{r
estimate C alpha} for any integer $k\geq 0$ with the fact that $r$,
its first order derivatives and its second order derivatives are
uniformly bounded.
\end{proof}

\begin{theorem} \label{long time exist}
If $M^{n}$ is a closed $n$-dimensional smooth manifold and $M_0:
M^{n} \rightarrow {\mathbb{H}}_{\kappa}^{n+1}$ is an immersion
pinched in the sense of \eqref{ini pinching}, then the solution
$M_{t}$ of \eqref{vpmthpcf}-\eqref{def barF} with initial condition
$X_0 $, is smooth and everywhere satisfies \eqref{ini pinching} on
$[0, \infty)$.
\end{theorem}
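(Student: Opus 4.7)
The plan is a standard continuation argument, assembling all the a priori estimates established so far. Let $T_{\max} \in (0, \infty]$ be the maximal time of existence of the smooth flow given by the short time existence result, and assume for contradiction that $T_{\max} < \infty$. Then every a priori estimate derived above applies on $[0, T_{\max})$: Theorem \ref{pinching} together with Corollary \ref{positive tildelambda} yields that \eqref{ini pinching} is preserved and $M_t$ stays strictly $h$-convex; the bounds \eqref{bound on the speed}, \eqref{bound on the speed H}, Corollary \ref{the lower bound onF} and Lemma \ref{nonegative of tilde H} give uniform two-sided bounds on the curvatures on $[0, T_{\max})$.

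Next I would exploit the local graph representation \eqref{Xtparam on sn} around a center $p_{t_0}$ of an inball of $\Omega_{t_0}$, which by Lemma \ref{t0+smalltime} is valid on a time window $[t_0, t_0+\tau)$ of length $\tau > 0$ independent of $t_0$. On each such window, Theorem \ref{Calpha estimate} supplies uniform $C^k$ bounds
\[
\|r\|_{C^{k}(S^n \times [t_0,\, t_0+\tau)\cap [0,T_{\max}))} \leq C_{9}(k)
\]
for every integer $k \geq 0$, with $C_9(k)$ independent of $t_0$. Covering $M_t$ by finitely many such local charts and using the uniform diameter bound from Corollary \ref{distance estimate}, these local estimates patch together to give uniform $C^k$ bounds on the immersions $X_t$, together with all their time derivatives (via the evolution equations), up to $t = T_{\max}$.

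By Arzelà–Ascoli applied in each $C^k$, we can pass to the limit $t \to T_{\max}^-$ and obtain a smooth limit immersion $X_{T_{\max}}: M^n \to \mathbb{H}_\kappa^{n+1}$ whose image $M_{T_{\max}}$ is a smooth closed strictly $h$-convex hypersurface. Since the pinching inequality \eqref{ini pinching} is preserved on $[0, T_{\max})$ and is a closed condition, it is inherited by $M_{T_{\max}}$; in particular $\tilde{H}(\cdot, T_{\max}) \geq C_5 e^{-C_6 T_{\max}} > 0$ by Lemma \ref{nonegative of tilde H}, so in particular $H > na$ on $M_{T_{\max}}$.

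Finally, I would apply the short time existence theorem with $M_{T_{\max}}$ as the new initial datum. This produces a smooth solution of \eqref{vpmthpcf}-\eqref{def barF} on $[T_{\max}, T_{\max} + \delta)$ for some $\delta > 0$, which glues with the original solution at $t = T_{\max}$ to yield a smooth solution on $[0, T_{\max} + \delta)$, contradicting the maximality of $T_{\max}$. Hence $T_{\max} = \infty$, and by Theorem \ref{pinching} the pinching condition \eqref{ini pinching} persists on all of $[0, \infty)$. The only real subtlety in the argument is verifying that the uniform $C^k$ constants in Theorem \ref{Calpha estimate} can be chosen independently of the chart center $p_{t_0}$; this follows because the inner radius bound \eqref{bound rho}, the preserved $h$-convexity, and the uniform two-sided curvature bounds together make every local graph representation uniformly controlled in $C^2$, which is the only geometric input used in the proof of Theorem \ref{Calpha estimate}.
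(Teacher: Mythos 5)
Your proposal is correct and follows essentially the same continuation argument as the paper: preserved pinching and $h$-convexity, uniform curvature and speed bounds, the uniform $C^k$ estimates of Theorem \ref{Calpha estimate} via the local graph representation, convergence to a smooth limit hypersurface at a putative finite maximal time, and reapplication of short time existence to derive a contradiction. The only cosmetic difference is that you pass to the limit by Arzel\`a--Ascoli plus time-derivative bounds, whereas the paper shows $\{X(t)\}$ is Cauchy in every $C^k$ via the speed bound and an interpolation inequality; these are interchangeable given the uniform estimates.
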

\begin{proof}
As we have already seen, the preserving pinching condition
\eqref{ini pinching} and smoothness throughout the interval of
existence follows from Theorem \ref{pinching} and Lemma
\ref{nonegative of tilde H}.

On the other hand, it is clear from the expression \eqref{Xtparam on
sn} for $\breve X_t$ that all the higher order derivatives of
$\breve X_t$ are bounded if and only if the corresponding
derivatives of $r$ are bounded. Thus, uniform $C^k$ estimates
\eqref{r estimate C alpha} of $r$ implies that all the derivatives
of $\breve X_t$ are also uniformly bounded. So the smoothness of
$\breve X_t$ implies the same smoothness of the reparametrization
$X_t$ of $\breve X_t$ given by \eqref{param1}.

It remains to show that the interval of existence is infinite.
Suppose to the contrary there is a maximal finite time $T$ beyond
which the solution cannot be extended and we derive a contradiction.
Then the evolution equation \eqref{vpmthpcf}-\eqref{def barF}
implies that
\[
\|X(p,\sigma)-X(p,\tau)\|_{C^{0}(X_{0})}\leq
\int_{\tau}^{\sigma}\big| \bar{F} - F\big|\left(p,t\right)\dif t
\]
for $0\leq \tau \leq \sigma < T$. By \eqref{bound on the speed} and
\eqref{bound on the speed barF}, $X(\cdot,T)$ tends to a unique
continuous limit $X(\cdot,T)$ as $t\rightarrow T$. In order to
conclude that $X(\cdot,T)$ represents a hypersurface $M_{T}$, next
under this assumption and in view of the evolution equation \eqref
{evmetric} the induced metric $g$ remains comparable to a fix smooth
metric $\tilde{g}$ on $M^{n}$:
\[
\left|\frac{\partial}{\partial
t}\left(\frac{g(\xi,\xi)}{\tilde{g}(\xi,\xi)}\right)\right|
=\left|\frac{\partial_{t}
g(\xi,\xi)}{g(\xi,\xi)}\frac{g(\xi,\xi)}{\tilde{g}(\xi,\xi)}\right|
\leq 2\big| \bar{F} -
F\big||A|_{g}\frac{g(\xi,\xi)}{\tilde{g}(\xi,\xi)},
\]
for any non-zero vector $\xi\in T M^{n}$,  so that ratio of lengths
is controlled above and below by exponential functions of time, and
hence since the time interval is bounded, there exists a positive
constant $C$ such that
\[
\frac{1}{C}\tilde{g}\leq g\leq C\tilde{g}.
\]
Then the metrics $g(t)$ for all different times are equivalent, and
they converge as $t\rightarrow T$ uniformly to a positive definite
metric tensor $g(T)$ which is continuous and also equivalent by
following Hamilton's ideas in \cite{Ham82}. Therefore using the
smoothness of the hypersurfaces $M_{t}$ and interpolation,
\allowbreak
\begin{align*}
&\|X(p,\sigma)-X(p,\tau)\|_{C^{k}(X_{0})} \\
&\leq C \|X(p,\sigma)-X(p,\tau)\|^{1/2}_{C^{0}(X_{0})}
\|X(p,\sigma)-X(p,\tau)\|^{1/2}_{C^{2k}(X_{0})}\\
&\leq C |\sigma - \tau|^{1/2},
\end{align*}
so the sequence $\{X(t)\}$ is a Cauchy sequence in $C^{k}(X_{0})$
for any $k$. Therefore $M_{t}$ converge to a smooth limit
hypersurface $M_{T}$ which must be a compact embedded hypersurface
in ${\mathbb{H}}_{\kappa}^{n+1}$. Finally, applying the local
existence result, the solution $M_{T}$ can be extended for a short
time beyond $T$, since there is a solution with initial condition
$M_{T}$, contradicting the maximality of $T$. This completes the
proof of Theorem \ref{long time exist}.
\end{proof}

\section{Exponential convergence to a geodesic sphere}
Observe that, to finish the proof of Theorem \ref{main theorem}, it
remains to deal with the issues related to the convergence of the
flow \eqref{vpmthpcf}-\eqref{def barF}: It should be proved that
solutions of equation \eqref{vpmthpcf}-\eqref{def barF} with initial
conditions \eqref{ini pinching} converge, exponentially in the
$C^{\infty}$-topology, to a geodesic sphere in
${\mathbb{H}}_{\kappa}^{n+1}$ as $t$ approaches infinity.

The first step is to show that, if a smooth limit exists, it has to
be a geodesic sphere of ${\mathbb{H}}_{\kappa}^{n+1}$. To address
the first step, let
\[
f= \frac{1}{n^n}-\frac{\tilde{K}}{\tilde{H}^n}.
\]
and we will show that the principal curvature come close together
when time tends to infinity. Then as remarked in Section
\ref{Preserving pingching}, $f\geq 0$ with equality holding only at
umbilic points, which is the value assumed on a geodesic sphere of
${\mathbb{H}}_{\kappa}^{n+1}$. The following Lemma is an immediate
consequence of the evolution equation \eqref{evtildeq} of
$\tilde{Q}$.
\begin{lemma}
The quantity $f$ evolves under \eqref{vpmthpcf}-\eqref{def barF}
satisfying
\begin{align}\label{evtildef}
\partial_{t} f=&\Delta_{\dot F} f
+\frac{(n+1)}{n\tilde{H}^{n}}\left\langle\nabla
 f,\nabla\tilde{H}^{n}\right\rangle_{\dot F}
-\frac{(n-1)}{n\tilde{K}}\left\langle\nabla  f,\nabla\tilde
K\right\rangle_{\dot F}
-\frac{\tilde{H}^{n}}{n\tilde{K}}\left|\nabla f\right|_{\dot F}^2
\\
&-\Bigg(\frac{\tilde{Q}}{\tilde{H}^{2}}\left|\tilde{H}\nabla\tilde{\mathscr{W}}
-\tilde{\mathscr{W}}\nabla\tilde{H}\right|^{2}_{\dot F,\tilde{b}}
+\tilde{Q}\,\tr_{\tilde{b}-\frac{n}{\tilde{H}}\Id}\left(
\ddot{F}(\nabla\tilde{\mathscr{W}} ,\nabla \tilde{\mathscr{W}})\right)\Bigg)\notag\\
&-\big[( m \beta-1) F+\bar{F} \big]
\frac{\tilde{Q}}{\tilde{H}}\left(n\bigl|\tilde{A}\bigl|^{2}-\tilde{H}^{2}\right)
-a\tilde{Q}\tr_{\dot F}(\tilde{A}\tilde{\mathscr{W}}
)\left(\tr(\tilde{b})-\frac{n^{2}}{\tilde{H}}\right).\notag
\end{align}
\end{lemma}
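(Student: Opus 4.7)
The plan is to derive \eqref{evtildef} as a direct algebraic consequence of the evolution equation \eqref{evtildeq} for $\tilde{Q}:=\tilde{K}/\tilde{H}^n$ established in the preceding lemma. Since
\[
f = \frac{1}{n^n} - \tilde{Q},
\]
and $1/n^n$ is a constant on spacetime, the pointwise identities
\[
\partial_t f = -\partial_t \tilde{Q}, \quad \nabla f = -\nabla \tilde{Q}, \quad \Delta_{\dot F} f = -\Delta_{\dot F}\tilde{Q}, \quad |\nabla f|_{\dot F}^2 = |\nabla \tilde{Q}|_{\dot F}^2
\]
hold automatically. The first step is to multiply both sides of \eqref{evtildeq} by $-1$ and then to rewrite every occurrence of $\partial_t\tilde{Q}$, $\Delta_{\dot F}\tilde{Q}$ and $\nabla\tilde{Q}$ via these identities.

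After this substitution, the Laplacian contribution $-\Delta_{\dot F}\tilde{Q}$ becomes $\Delta_{\dot F} f$, and each of the four zero-order blocks on the right-hand side of \eqref{evtildeq} (the $|\tilde{H}\nabla\tilde{\mathscr{W}}-\tilde{\mathscr{W}}\nabla\tilde{H}|^2_{\dot F,\tilde{b}}$ piece, the $\tr_{\tilde{b}-\frac{n}{\tilde{H}}\Id}\bigl(\ddot F(\nabla\tilde{\mathscr{W}},\nabla\tilde{\mathscr{W}})\bigr)$ piece, the $[(m\beta-1)F+\bar{F}]$ curvature block, and the $a\,\tr_{\dot F}(\tilde A\tilde{\mathscr{W}})$ block) survives in form but with its sign reversed, producing the three bracketed negative groups of \eqref{evtildef}. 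For the two first-order inner products the sign flips twice -- once from the global multiplication by $-1$ and once from $\nabla\tilde{Q}=-\nabla f$ -- so the coefficients $\tfrac{n+1}{n\tilde{H}^n}$ and $-\tfrac{n-1}{n\tilde{K}}$ reappear verbatim in front of $\langle\nabla f,\nabla\tilde{H}^n\rangle_{\dot F}$ and $\langle\nabla f,\nabla\tilde{K}\rangle_{\dot F}$ respectively, and the quadratic gradient term is transferred using $|\nabla\tilde{Q}|_{\dot F}^2=|\nabla f|_{\dot F}^2$.

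The proof is therefore purely symbolic and requires no new analytic input beyond \eqref{evtildeq}. I do not foresee a substantive obstacle; the only care needed is to track parity correctly, since the linear inner products $\langle\nabla\tilde{Q},\cdot\rangle$ are \emph{odd} under $\tilde{Q}\mapsto 1/n^n-f$ while the quadratic expressions $|\nabla\tilde{Q}|_{\dot F}^2$ and the coefficient $\tilde{Q}$ appearing in the Hessian-type and trace-type blocks are \emph{even} (respectively invariant up to an additive constant that is harmless in those positions), so the two families must be rewritten under different sign conventions. Once this bookkeeping is carried out, \eqref{evtildef} follows as a line-by-line transcription of \eqref{evtildeq}.
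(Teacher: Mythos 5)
Your method is exactly the paper's: the paper offers no proof beyond the remark that the lemma is an immediate consequence of \eqref{evtildeq}, and the substitution $f=\tfrac{1}{n^n}-\tilde Q$ together with the parity rules you describe is all there is to it. However, your own bookkeeping, carried out faithfully, does \emph{not} reproduce \eqref{evtildef} verbatim on one term: since $|\nabla\tilde Q|^2_{\dot F}=|\nabla f|^2_{\dot F}$ is even while the whole equation is multiplied by $-1$, the term $-\tfrac{\tilde H^n}{n\tilde K}\,|\nabla\tilde Q|^2_{\dot F}$ of \eqref{evtildeq} becomes $+\tfrac{\tilde H^n}{n\tilde K}\,|\nabla f|^2_{\dot F}$, whereas \eqref{evtildef} prints it with a minus sign. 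So either the lemma as stated carries a sign typo in that single term (the likely reading; it is harmless for the subsequent maximum-principle argument because $\nabla f=0$ at a spatial extremum of $f$), or your claim that the identity follows ``line-by-line'' is wrong. You correctly classify the quadratic gradient term as even under the substitution, so you should have noticed that its sign must flip relative to \eqref{evtildeq}; as written, your proposal asserts a conclusion that your own (correct) parity rules contradict. Everything else — the Laplacian, the two first-order inner products (which flip sign twice and so keep their coefficients), and the four zero-order blocks (which simply acquire an overall minus while the factor $\tilde Q$ is left intact) — checks out.
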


\begin{corollary}
Under the conditions of Theorem \ref{main theorem},
\begin{align}\label{ev.inequa.f}
\partial_{t} f\leq &\Delta_{\dot F} f
+\frac{(n+1)}{n\tilde{H}^{n}}\left\langle\nabla
 f,\nabla\tilde{H}^{n}\right\rangle_{\dot F}
-\frac{(n-1)}{n\tilde{K}}\left\langle\nabla  f,\nabla\tilde
K\right\rangle_{\dot F}
-\frac{\tilde{H}^{n}}{n\tilde{K}}\left|\nabla f\right|_{\dot F}^2
\\
&- C_{11}{\tilde{H}}f,\notag
\end{align}
where $C_{11}=\delta m\beta a^{m\beta}C^{*}$
\end{corollary}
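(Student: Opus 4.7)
The plan is to start from the identity \eqref{evtildef} and show that the four lower-order terms on its right-hand side add up to at most $-C_{11}\tilde H f$; the four gradient/diffusion terms in \eqref{evtildef} already match the target inequality \eqref{ev.inequa.f} verbatim, so nothing needs to be done with them.

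First I would discard the term
\[
-a\tilde Q\,\tr_{\dot F}(\tilde A\tilde{\mathscr{W}})\Bigl(\tr(\tilde b)-\frac{n^2}{\tilde H}\Bigr).
\]
By Corollary \ref{positive tildelambda} we have $\tilde\lambda_i>0$ for every $i$, so $\tilde Q>0$, $\tr_{\dot F}(\tilde A\tilde{\mathscr{W}})=\sum_i\dot F^i\tilde\lambda_i^{\,2}\geq 0$, and $\tr(\tilde b)-n^2/\tilde H=\sum_i 1/\tilde\lambda_i-n^2/\sum_i\tilde\lambda_i\geq 0$ by the arithmetic--harmonic mean inequality; hence the whole term is $\leq 0$ pointwise. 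Next, I would control the pair
\[
-\frac{\tilde Q}{\tilde H^{2}}\bigl|\tilde H\nabla\tilde{\mathscr{W}}-\tilde{\mathscr{W}}\nabla\tilde H\bigr|^{2}_{\dot F,\tilde b}\;-\;\tilde Q\,\tr_{\tilde b-\frac{n}{\tilde H}\Id}\!\bigl(\ddot F(\nabla\tilde{\mathscr{W}},\nabla\tilde{\mathscr{W}})\bigr)
\]
by using $-\tr_A B\leq |A|\,|B|$ on the second summand, which bounds this pair from above by $-\tilde Q$ times exactly the quantity proved to be nonnegative in \eqref{the first two estimation}. The pinching estimate \eqref{tildle lambda pinching}, Lemma \ref{a key estimate}, and the defining property \eqref{varepsilon0} of $\varepsilon_0$ then give that the pair is $\leq 0$ pointwise.

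Finally, the remaining contribution is the zeroth-order term
\[
-\bigl[(m\beta-1)F+\bar F\bigr]\frac{\tilde Q}{\tilde H}\bigl(n|\tilde A|^{2}-\tilde H^{2}\bigr),
\]
which I would bound from above using three independent positive lower bounds. Applying Lemma \ref{important inequality} to the $\tilde\lambda_i$'s, which satisfy the pinching \eqref{tildle lambda pinching}, yields $n|\tilde A|^2-\tilde H^2\geq\delta\tilde H^2 f$ for some $\delta=\delta(\varepsilon_0,n)>0$; Theorem \ref{pinching} gives $\tilde Q\geq C^{*}$; and Corollaries \ref{the lower bound onF}--\ref{the lower bound on bar F} combined with $m\beta\geq 1$ yield $(m\beta-1)F+\bar F\geq(m\beta-1)a^{m\beta}+a^{m\beta}=m\beta\,a^{m\beta}$. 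Multiplying these together (and using $f\geq 0$) gives
\[
-\bigl[(m\beta-1)F+\bar F\bigr]\frac{\tilde Q}{\tilde H}\bigl(n|\tilde A|^{2}-\tilde H^{2}\bigr)\;\leq\;-\delta\,m\beta\,a^{m\beta}\,C^{*}\,\tilde H\,f\;=\;-C_{11}\tilde H f,
\]
and combining this with the two nonpositive terms already discarded proves \eqref{ev.inequa.f}. The only delicate point is the mixed $\ddot F$-term, and the main obstacle is really already resolved in the proof of Theorem \ref{pinching}: the present argument just repackages that pointwise estimate together with Lemma \ref{important inequality} and the uniform positive lower bounds on $F$ and $\bar F$.
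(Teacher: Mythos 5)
Your proposal is correct and is essentially the paper's own argument: the paper's one-line proof simply cites the pinching argument of Theorem \ref{pinching} (to discard the gradient-square/$\ddot F$ pair via \eqref{the first two estimation} and the last term via the arithmetic--harmonic mean inequality), Lemma \ref{important inequality} applied to the $\tilde\lambda_i$, the preserved pinching $\tilde Q\geq C^{*}$, and the lower bounds $F,\bar F\geq a^{m\beta}$, which is exactly the decomposition and the bookkeeping you carry out, yielding the same constant $C_{11}=\delta m\beta a^{m\beta}C^{*}$. Your write-up just makes explicit the details the paper leaves implicit.
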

\begin{proof}
Applying the similar argument as in Theorem \ref{pinching},
Corollary \ref{the lower bound on bar F}, inequality \eqref{ini
pinching} and Lemma \ref{important inequality} to \eqref{evtildef}
gives the conclusion.
\end{proof}

Thus, in order to apply the maximal principle to \eqref
{ev.inequa.f}, we have to obtain a stronger lower bound on
$\tilde{H}$. Firstly, the following corollary of the parabolic
Harnack inequality, which is proven in \cite[Proposition
6.1]{Mak12}, will allow us to estimate $\tilde{H}$ uniformly from
below.

\begin{proposition}
\label{HarnackIneq} For $(x_0, t_0) \in \mathbb{R}^n\times
\mathbb{R}$ and $\rho\in \mathbb{R}_+$ we let $G((x_0, t_0), \rho)
:= B_{\rho}(x_0) \times [t_0-\rho^2, t_0] \subset \mathbb{R}^n\times
\mathbb{R}$ and $G(\rho) := G((0,0), \rho)$. Let $\theta \in
C^{\infty}(G(4 R))$ be a nonnegative solution of an equation of the
form
\begin{equation}
\left(\partial_{t} -\Delta_{\dot F}\right)\theta = - \dot \theta +
a^{ij}\theta_{ij} + b^i \theta_i = g.
\end{equation}
Here $g=g(x, t, \theta(x, t))$ and we assume that there exists
$\zeta \in \mathbb{R}_+$ such that $f$ satisfies the inequality
$-\zeta \theta(x, t) \leq g(x, t, \theta(x, t)) \leq \zeta \theta(x,
t)$ for all $(x, t)\in G(4R)$. We assume the coefficients are
measureable and bounded by a constant $c_0 \in \mathbb{R}_+$ and
there exist
$0<\Lambda_{1}\leq \Lambda_{2} <\infty$ such that $\Lambda_{1} (\delta^{ij}) \leq (a^{ij}) \leq \Lambda_{2} (\delta^{ij})$. 
Then there exists $c=c(n, \Lambda_{1}, \Lambda_{2}, R, \Vert
b\Vert_{L^{\infty}}, c_0, \zeta) > 0$ such that there holds
\begin{equation}
    \underset{G\left((0, -4R^2), \frac R2\right)}{\sup} \theta \leq c\cdot \underset{G(R)}{\inf} \theta.
\end{equation}
\end{proposition}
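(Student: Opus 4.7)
The plan is to reduce Proposition \ref{HarnackIneq} to the classical parabolic Krylov--Safonov Harnack inequality for uniformly parabolic equations in non-divergence form with bounded measurable coefficients. The two-sided bound $-\zeta\theta \leq g \leq \zeta\theta$ is precisely what makes this reduction clean.

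First I would absorb the zero-order term by the standard exponential trick. Define $\theta_{+} := e^{-\zeta t}\theta$ and $\theta_{-} := e^{\zeta t}\theta$. Using $g \leq \zeta\theta$, a direct computation gives
\begin{equation*}
\partial_t \theta_+ - a^{ij}(\theta_+)_{ij} - b^i(\theta_+)_i
  = e^{-\zeta t}\bigl( g - \zeta\theta \bigr) \leq 0,
\end{equation*}
so $\theta_{+}$ is a non-negative subsolution of the homogeneous, uniformly parabolic operator $L := \partial_t - a^{ij}D_iD_j - b^i D_i$ on $G(4R)$. Symmetrically, $g \geq -\zeta\theta$ yields $L\theta_{-} \geq 0$, so $\theta_{-}$ is a non-negative supersolution of the same operator. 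The coefficients satisfy exactly the hypotheses required by Krylov--Safonov: measurable $a^{ij}$ with $\Lambda_1(\delta^{ij}) \leq (a^{ij}) \leq \Lambda_2(\delta^{ij})$, and $\|b\|_{L^\infty} \leq c_0$.

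Second, I would invoke the two halves of the parabolic Harnack chain for $L$. For the non-negative supersolution $\theta_{-}$, the weak Harnack inequality gives, for some small $p_0 = p_0(n,\Lambda_1,\Lambda_2,\|b\|_{L^\infty}) > 0$,
\begin{equation*}
  \Bigl( \fint_{G((0,-4R^2),R)} \theta_-^{p_0} \Bigr)^{1/p_0}
  \leq C_1 \inf_{G(R)} \theta_{-}.
\end{equation*}
For the non-negative subsolution $\theta_{+}$, the local maximum principle gives
\begin{equation*}
  \sup_{G((0,-4R^2),R/2)} \theta_{+}
  \leq C_2 \Bigl( \fint_{G((0,-4R^2),R)} \theta_+^{p_0} \Bigr)^{1/p_0}.
\end{equation*}
On the cylinder $G((0,-4R^2),R)$ one has $\theta_{+} = e^{-2\zeta t}\theta_{-}$ with $t \in [-20R^2,-3R^2]$, so the two $L^{p_0}$ integrals differ only by a factor bounded by $e^{40\zeta R^2}$. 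Chaining the two inequalities and unwinding the exponentials $e^{\pm \zeta t}$ (bounded on $G(4R)$ by $e^{16\zeta R^2}$) yields $\sup_{G((0,-4R^2),R/2)} \theta \leq c\,\inf_{G(R)} \theta$ with $c = c(n,\Lambda_1,\Lambda_2,R,\|b\|_{L^\infty},c_0,\zeta)$, as claimed.

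The main obstacle, and really the heart of the argument, is the weak Harnack inequality for non-negative supersolutions of a non-divergence-form uniformly parabolic operator with only bounded measurable leading coefficients. This is the deep result of Krylov and Safonov, whose proof goes through the parabolic Aleksandrov--Bakel'man--Pucci maximum principle together with a Calder\'on--Zygmund-type measure/covering estimate; see \cite{KS} (or the exposition in \cite{Lieb}). Taking that result as a black box, the reduction sketched above is essentially bookkeeping: the exponential factors are harmless, and the constant $c$ produced by Krylov--Safonov absorbs $\zeta$, $R$, $\|b\|_{L^\infty}$, $c_0$, $\Lambda_1$ and $\Lambda_2$ in the stated fashion.
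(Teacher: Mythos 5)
The paper does not prove this proposition at all: it is quoted verbatim as \cite[Proposition~6.1]{Mak12} and used as a black box, so there is no in-paper argument to compare against. Your reduction is correct and is essentially the standard proof of such statements (and, in substance, the one in the cited source): the exponential substitutions $\theta_{\pm}=e^{\mp\zeta t}\theta$ convert the two-sided bound $-\zeta\theta\le g\le\zeta\theta$ into a nonnegative subsolution and a nonnegative supersolution of the homogeneous uniformly parabolic operator with measurable coefficients, after which the Krylov--Safonov weak Harnack inequality for $\theta_{-}$ and the local maximum principle for $\theta_{+}$ (both valid for non-divergence form operators with $\Lambda_1(\delta^{ij})\le(a^{ij})\le\Lambda_2(\delta^{ij})$ and bounded $b$) chain together, the exponential factors being uniformly bounded on $G(4R)$. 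Two cosmetic points: on $G((0,-4R^2),R)$ the time variable ranges over $[-5R^2,-4R^2]$, not $[-20R^2,-3R^2]$ (your bound is a harmless over-estimate), and matching the exact cylinder configuration of the statement to the cylinders in the standard weak Harnack inequality requires a finite Harnack chain inside $G(4R)$, which you gesture at with ``chaining'' and which is indeed routine.
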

From this the following lemma can be obtained.
\begin{lemma}
\label{tilde H LowerBound}
    There exists a constant $0< C_{12} = C_{12}(n,a,m,\beta,M_{0})$ such that for all $t\in [0, \infty)$
    \begin{equation}
       \tilde{H} \geq C_{12}.
    \end{equation}
\end{lemma}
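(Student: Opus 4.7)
The approach is to apply the parabolic Harnack inequality of Proposition \ref{HarnackIneq} to $\tilde H$. The argument has three ingredients, combined via an iteration.

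First, I would establish a uniform positive lower bound on $\max_{M_t}\tilde H$. Let $o$ be the center of an inscribed ball of $\Omega_t$ and $q_t \in M_t$ a point realizing $\maxd(o,\partial\Omega_t)$. The geodesic sphere centered at $o$ of radius $d(o,q_t)$ then contains $\Omega_t$ and is tangent to $M_t$ at $q_t$, so by the standard comparison principle each principal curvature of $M_t$ at $q_t$ is at least $a\coth(a\,d(o,q_t))$, the principal curvature of this sphere. Combining Lemma \ref{$h$-convex}\,(i) with \eqref{bound rho} gives $d(o,q_t)\leq \rho_{-}(t)+a\ln 2\leq \psi(V_0)+a\ln 2$, whence
\[
\max_{M_t}\tilde H \geq n\bigl(a\coth(a(\psi(V_0)+a\ln 2))-a\bigr)=: C_{\max}>0.
\]

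Second, I would rewrite \eqref{evturH} in the local graph coordinates introduced in Section \ref{Long time existence}, obtaining a linear parabolic equation $(\partial_t-\Delta_{\dot F})\tilde H=g$. Uniform parabolicity and $L^\infty$-bounds on the lower-order coefficients follow from Lemma \ref{UniformlyParabolic} together with the pinching \eqref{lambda pinching} and the uniform $C^k$-estimates \eqref{r estimate C alpha}. The source $g$, consisting of the remaining terms in \eqref{evturH}, is bounded linearly in $\tilde H$ using $|\tilde A|^2\leq \tilde H\cdot\max_i\tilde\lambda_i\leq (C_4-na)\tilde H$, the analogous bound $\tr_{\dot F}(\tilde A\tilde{\mathscr{W}})\leq C\tilde H$, the uniform bounds \eqref{bound on the speed}, \eqref{bound on the speed barF}, \eqref{bound on the speed H}, and, for the term $\tr[\ddot F(\nabla\tilde{\mathscr{W}},\nabla\tilde{\mathscr{W}})]$, its uniform absolute bound (from \eqref{r estimate C alpha} and the bounds on $\ddot F$ over the pinched cone) combined with Lemma \ref{nonegative of tilde H} applied on a preliminary time window $[0,T_1]$; this yields $|g|\leq \zeta\tilde H$ with $\zeta$ depending on $T_1$ but not otherwise on $t$.

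Third, the two ingredients are combined by iterating Harnack. Fix $R>0$ small enough that graph charts of radius $R$ lie within the charts of Theorem \ref{Calpha estimate}, and set $T_1=5R^2$. For any $t\geq T_1$, Proposition \ref{HarnackIneq} applied to $\tilde H$ in a chart centered at $q_{t-T_1}$ gives $\tilde H\geq C_{\max}/c$ on a geodesic ball of radius $R/2$ around $q_{t-T_1}$ at time $t$; iterating this procedure along a chain of overlapping balls covering $M_t$, whose number $N$ is uniformly bounded thanks to the diameter estimate of Corollary \ref{distance estimate}, propagates the bound to the entire hypersurface, giving $\min_{M_t}\tilde H\geq C_{\max}/c^N=:C_{12}$ for $t\geq NT_1$; for $t\leq NT_1$ the bound \eqref{tilde H_positive} of Lemma \ref{nonegative of tilde H} suffices.

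The main obstacle is the verification of $|g|\leq \zeta\tilde H$ with $\zeta$ uniform in $t$. The term $\tr[\ddot F(\nabla\tilde{\mathscr{W}},\nabla\tilde{\mathscr{W}})]$ admits only an absolute upper bound from the higher-regularity estimates, so converting it into a linear bound forces $\zeta$ to depend on the time window used in Lemma \ref{nonegative of tilde H}. However, since the chaining argument uses only a fixed finite number $N$ of Harnack applications, each over a parabolic cylinder of the same fixed length $T_1$, the Harnack constant $c$ and hence $C_{12}$ remain independent of $t$.
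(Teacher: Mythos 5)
Your overall strategy --- a geometric lower bound on the spatial maximum via the contact point with an enclosing sphere, then propagation to the minimum by the Harnack inequality of Proposition \ref{HarnackIneq} --- is the same as the paper's. The fatal difference is the quantity you feed into Harnack. The evolution equation \eqref{evturH} of $\tilde{H}$ contains the term $\tr\big[\ddot{F}(\nabla\tilde{\mathscr{W}},\nabla\tilde{\mathscr{W}})\big]$, which is quadratic in $\nabla\mathscr{W}$ and admits no pointwise estimate of the form $\zeta\,\tilde{H}$: at a point where $\tilde{H}$ is small the pinching forces all $\tilde{\lambda}_i$ to be small but says nothing about $|\nabla\mathscr{W}|$, so the best you have is an absolute bound (from the $C^k$-estimates), which is exactly not of the required form. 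Your proposed fix --- converting the absolute bound into a linear one via Lemma \ref{nonegative of tilde H} --- does not yield a $\zeta$ independent of $t$: to apply Harnack on a cylinder ending at an arbitrary time $t$ you need $|g|\leq\zeta\tilde{H}$ on that cylinder, and the only a priori lower bound available there is $\tilde{H}\geq C_5 e^{-C_6 t}$, so $\zeta$ must grow like $e^{C_6 t}$ and the Harnack constant $c=c(\dots,\zeta)$ degenerates as $t\to\infty$. The assertion that $\zeta$ depends only on $T_1$ is therefore false, and this is precisely where the argument breaks; a bootstrap (assuming the conclusion on $[0,kT_1]$ to control $\zeta$ on the next window) would require a fixed-point inequality between $C_{12}$ and $c(\zeta(C_{12}))$ that you have not established and that need not hold.

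The paper sidesteps the problem by applying Harnack to $\theta=F-a^{m\beta}$ instead of $\tilde{H}$. Since the evolved quantity is the speed itself, the second-derivative terms cancel identically and \eqref{evF} gives $(\partial_t-\Delta_{\dot F})\theta=(F-\bar{F})\big[\tr_{\dot F}(A\mathscr{W})-a^2\tr(\dot F)\big]$, a source of zeroth order in $\nabla\mathscr{W}$; by the computation in Corollary \ref{the lower bound onF} together with $F-a^{m\beta}\geq m\beta\,a^{m\beta-1}\big(F^{1/m\beta}-a\big)$, this source is bounded by $\zeta\theta$ with $\zeta$ uniform in $t$. The first ingredient becomes $\sup_{M_t}F\geq\co_{\kappa}^{m\beta}(\maxd)>a^{m\beta}$ at the contact point, and the resulting uniform lower bound on $F$ converts into one on $H$, hence on $\tilde{H}$, via Lemma \ref{property Hm}\,iii). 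Your first and third steps survive essentially unchanged once $\tilde{H}$ is replaced by $F-a^{m\beta}$; it is the second step that must be redone with this different choice of $\theta$.
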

\begin{proof}
    We will follow an idea from \cite[Lemma 6.2]{Mak12}. For $t\in [0, \infty)$
    let $x_t\in M_t$ be a point in contact with an enclosing sphere of radius $D_{1}<\rho < D_{2}$. This implies
    \begin{equation}
    \label{BoundFSupBelow}
        \underset{x\in M_t}{\sup} F(x) \geq F(x_t) \geq \co^{m\beta}_{\kappa} (\rho_{-}) \geq \co^{m\beta}_{\kappa}(D_{1}).
    \end{equation}
   In view of \eqref{evF}, $\theta:= F- a^{m\beta}$ satisfies the evolution equation
    \begin{equation}
        \left(\partial_{t} -\Delta_{\dot F}\right)(F- a^{m\beta}) = (F - \bar{F}) \,\big[\tr_{\dot
F} (A \mathscr{W}) - a^{2}\tr(\dot F)\big].
    \end{equation}
Since $ \left(\partial_{t} -\Delta_{\dot F}\right)$ is uniformly
parabolic in view of Corollary \ref{the lower bound onF} and Lemma
\ref{UniformlyParabolic},
    we can apply Proposition \ref{HarnackIneq} together with \eqref{BoundFSupBelow} to obtain
    the lower bound for $F - a^{m\beta}$ for all times
    \[
F - a^{m\beta} \geq c\left(\co^{m\beta}_{\kappa}(D_{1})-
a^{m\beta}\right).
    \]
    Therefore, with help of Lemma \ref{property Hm},
    we have
    \[
H \geq n\left[c\left(\co^{m\beta}_{\kappa}(D_{1})-
a^{m\beta}\right)+ a^{m\beta}\right]^{\frac{1}{m\beta}},
    \]
    which implies
    \[
\tilde{H} \geq C_{12},
    \]
    where \[
C_{12}= n\left\{\left[c\left(\co^{m\beta}_{\kappa}(D_{1})-
a^{m\beta}\right)+ a^{m\beta}\right]^{\frac{1}{m\beta}}- a\right\}>
0.
    \]
\end{proof}

\begin{proposition}\label{convergence of f}
Under the conditions of Theorem \ref{main theorem}, the rate of
convergence of $f$ to $0$ as $t\rightarrow \infty$ is exponential.
\end{proposition}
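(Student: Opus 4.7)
The plan is to apply the parabolic maximum principle directly to the differential inequality \eqref{ev.inequa.f} for $f$, combined with the uniform positive lower bound $\tilde H \geq C_{12}$ from Lemma \ref{tilde H LowerBound}.

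Set $f_{\max}(t) := \max_{p\in M^n} f(p,t)$, which is well defined and Lipschitz in $t$ since $M^n$ is compact and $f$ is smooth in space and time by Theorem \ref{long time exist}. At any point $(p_t,t)$ realizing this spatial maximum, one has $\nabla f (p_t,t)=0$ and $\Delta_{\dot F} f(p_t,t)\le 0$; consequently every gradient term in the right-hand side of \eqref{ev.inequa.f} vanishes and the diffusion term has a favourable sign. Therefore, by the Hamilton ODE-comparison version of the parabolic maximum principle applied to \eqref{ev.inequa.f}, we obtain
\begin{equation*}
\frac{d}{dt} f_{\max}(t) \;\le\; -C_{11}\,\tilde H(p_t,t)\, f_{\max}(t) \;\le\; -C_{11}\,C_{12}\, f_{\max}(t),
\end{equation*}
where in the last inequality we used Lemma \ref{tilde H LowerBound}. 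Gronwall's inequality then yields
\begin{equation*}
0\le f_{\max}(t) \;\le\; f_{\max}(0)\, e^{-C_{11}C_{12}\,t}\qquad \text{for all } t\ge 0,
\end{equation*}
which is the desired exponential decay. Note that $f_{\max}(0)<1/n^n$ is finite thanks to the initial pinching assumption \eqref{ini pinching}.

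There is essentially no obstacle in this step: all the real work has already been done in establishing \eqref{ev.inequa.f} (which used preservation of pinching via Theorem \ref{pinching}, Lemma \ref{important inequality}, and Corollary \ref{the lower bound on bar F}) and in proving the uniform positive lower bound on $\tilde H$ (Lemma \ref{tilde H LowerBound}, obtained from a Harnack inequality of Makowski). The only minor technical point is that $f_{\max}$ need not be $C^1$, but this is handled in the standard way by interpreting the differential inequality in the sense of upper-Dini derivatives or by testing with a regularisation $f+\varepsilon$ and letting $\varepsilon\to 0$. The constant $C_{11}C_{12}>0$ in the exponent depends only on $n,m,\beta,a$ and $M_0$, as required for the subsequent step of upgrading this decay to exponential convergence in $C^\infty$ to a geodesic sphere.
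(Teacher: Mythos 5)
Your proposal is correct and follows essentially the same route as the paper: the paper likewise applies the maximum principle to the differential inequality for $f$ derived from \eqref{evtildef}, invokes the lower bound $\tilde H\geq C_{12}$ from Lemma \ref{tilde H LowerBound}, and integrates $\partial_t f_{\max}\leq -C_{11}C_{12}f_{\max}$ to get $f_{\max}(t)\leq f_{\max}(0)\,\e^{-C_{11}C_{12}t}$. Your explicit treatment of the vanishing gradient terms at the spatial maximum and of the Lipschitz/Dini-derivative issue is just a more careful write-up of the same argument.
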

\begin{proof}
Applying the similar argument as in Theorem \ref{pinching} and Lemma
\ref{tilde H LowerBound} to \eqref{evtildef} gives
\begin{align}
\partial_{t} f_{\max}(t)\leq -C_{11}C_{12}f_{\max}(t).\notag
\end{align}
which implies that
\begin{align}
f_{\max}(t)\leq C_{13}\e^{- C_{14}t},\notag
\end{align}
where $C_{13}=f_{\max}(0)$, $C_{14}=C_{11}C_{12}$. This proves the
Proposition.
\end{proof}

As we have seen, since $M_{t}$ is strictly $h$-convex on $[0,
\infty)$, from Lemma \ref{t0+smalltime} there exists some constant
$\tau=\tau(a, n, m, \beta, V_0) (>0)$ such that for each $t_0 \in
[0, \infty)$, on the time interval $[t_0, t_0+\tau)$, $p_{t_0} \in
\Omega_t$, and $M_{t}$ can be represented as $\graph\,(r)$
\eqref{param1}. Let us go back to the evolution equation
\eqref{ev.param.r2} of $r$, we know that it is uniformly parabolic
because of $h$-convexity of $M_{t}$ for any $t\in [0, \infty)$. So
repeating the same arguments as in Section \ref{Long time existence}
implies that all the derivatives of $r$ are uniformly bounded. Thus,
we are now in conditions to apply Arzel-Ascoli Theorem to conclude
the existence of sequences of maps $r_{t_{i}}$ and $\breve
X_{t_{i}}$ solves \eqref{another parametrization} with
$t_{i}\rightarrow \infty$, which $C^{\infty}$-converge to smooth
maps $ r_\infty : S^n\rightarrow \mathbb{R}_+$ and $\breve X_\infty:
S^n \rightarrow {\mathbb{H}}_{\kappa}^{n+1}$ satisfying $
\check{X}_\infty(u) = \exp_{p_0} r_\infty(u) u$. Therefore, the
reparametrization $X_t$ of $\breve X_t$ given by \eqref{param1} has
the same convergence properties. Thus we conclude that $X_{t_{i}}$
$C^{\infty}$-converges to a map $X_\infty: S^n \rightarrow
{\mathbb{H}}_{\kappa}^{n+1}$, and that two immersions $ X_\infty$
and $\breve X_\infty$ are equivalent in the sense of an isometry of
${\mathbb{H}}_{\kappa}^{n+1}$ carrying $p_{\infty}$ to $p_0$. And
 since the convergence is smooth and
all the hypersurfaces $X_t(S^n)$ satisfy \eqref{ini pinching}, we
can assure that $ \mathscr{S} = X_\infty(S^n)$ must be a compact
embedded hypersurface and satisfies \eqref{ini pinching}.

On the other hand, Proposition \ref{convergence of f} says that all
points on $\mathscr{S}$ are umbilic points. In conclusion, the only
possibility is that $\mathscr{S}$ represents a geodesic sphere in
${\mathbb{H}}_{\kappa}^{n+1}$ and, by the volume-preserving
properties of the flow, such sphere has to enclose the same volume
as the initial condition $X_0(M)$.

Finally, from Proposition \ref{convergence of f} we can conclude
with the standard arguments as in \cite[Theorem 3.5]{Sch06},
\cite[Theorem 7.3]{CS10} and \cite[Corollary 7.3]{Mak12} that the
flow converges exponentially to the geodesic sphere $\mathscr{S}$ in
${\mathbb{H}}_{\kappa}^{n+1}$ in the $C^{\infty}$-topology.

\bibliographystyle{amsplain}

\end{document}